\providecommand{\U}[1]{\protect\rule{.1in}{.1in}}
\DeclareRobustCommand{\rchi}{{\mathpalette\irchi\relax}}
\newcommand{\irchi}[2]{\raisebox{\depth}{$#1\chi$}} 
\newtheorem{prop}{Proposition}[section]
\newtheorem{cor}[prop]{Corollary}
\newtheorem{rmk}[prop]{Remark}
\newtheorem{lem}[prop]{Lemma}
\newtheorem{theo}[prop]{Theorem}
\newcommand{\vertiii}[1]{{\left\vert\kern-0.25ex\left\vert\kern-0.25ex\left\vert #1
    \right\vert\kern-0.25ex\right\vert\kern-0.25ex\right\vert}}
\def\tr{\mbox{\rm Tr}}
\newcommand{ \DD }{\mathbb{D}}
\newcommand{\EE}{\mathbb{E}}
\newcommand{\II}{\mathbb{I}}
\newcommand{\JJ}{\mathbb{J}}
\newcommand{\LL}{\mathbb{L}}
\newcommand{\PP}{\mathbb{P}}
\newcommand{\RR}{\mathbb{R}}
\newcommand{\TT}{\mathbb{T}}
\newcommand{\Na}{ {\cal N }}
\newcommand{\Ia}{ {\cal I }}
\newcommand{\Ma}{ {\cal M }}
\newcommand{\Ta}{ {\cal T}}
\newcommand{\Pa}{ {\cal P }}
\newcommand{\Wa}{ {\cal W }}
\newcommand{\point}{\mbox{\LARGE .}}
\newcommand{\cqfd}{\hfill\blbx \\}
\def\blbx{\hbox{\vrule height 5pt width 5pt depth 0pt}\medskip}
\def \PP{\mathbb{P}}
\def \RR{\mathbb{R}}
\def \SS{\mathbb{S}}
\def \EE{\mathbb{E}}
\def \EE{\mathbb{E}}
\def \LL{\mathbb{L}}
\def \WW{\mathbb{W}}
\numberwithin{equation}{section}
\DeclareRobustCommand\frownotimes{\mathbin{\mathpalette\frown@otimes\relax}}
\newcommand{\frown@otimes}[2]{
  \vbox{
    \ialign{##\cr
      \hidewidth$\m@th#1{}_\frown$\kern-\scriptspace\hidewidth\cr
      \noalign{\nointerlineskip\kern-.1pt}
      $\m@th#1\otimes$\cr
    }
  }
}
\newcommand\quotient[2]{
        \mathchoice
            {
                \text{\raise0ex\hbox{$#1$}/\lower0ex\hbox{$#2$}}%
            }
            {
                #1\,/\,#2
            }
            {
                #1\,/\,#2
            }
            {
                #1\,/\,#2
            }
    }
\begin{document}

\title{Backward It\^o-Ventzell and stochastic interpolation formulae}
\author[$1$]{P. Del Moral\thanks{P. Del Moral was supported in part from the Chair Stress Test, RISK Management and Financial Steering, led by the French Ecole polytechnique and its Foundation and sponsored by BNP Paribas, and by the ANR Quamprocs on quantitative analysis of metastable processes.\\
Authors declaration of interests: none}}
\author[$2$]{S. S. Singh}
\affil[$1$]{{\small INRIA, Bordeaux Research Center \& CMAP, Polytechnique Palaiseau, France}}
\affil[$2$]{{\small Department of Engineering, University of Cambridge, United Kingdom.}}
\date{}

\maketitle

\begin{abstract}
We present a novel backward It\^o-Ventzell formula and an extension of the 
Alekseev-Gr\"obner interpolating formula to stochastic flows. We also present some natural spectral conditions that yield direct and simple proofs of time uniform estimates of the difference between the two stochastic flows when their drift and diffusion functions are not the same, yielding what seems to be the first results of this type for this class of  anticipative models.
  We illustrate the impact of these results in the context of diffusion perturbation theory, interacting diffusions and discrete time approximations.\\

\emph{Keywords} : Stochastic flows, variational equations, tangent and Hessian processes, perturbation semigroups, backward It\^o-Ventzell formula, Alekseev-Gr\"obner lemma, Skorohod stochastic integral, two-sided stochastic integration, Malliavin differential, Bismut-Elworthy-Li formulae.\\

\emph{Mathematics Subject Classification} : 47D07, 93E15, 60H07.

\end{abstract}

\section{Introduction}

 Let  $b_t(x)$ be a vector-valued function from  $\RR^{d}$ into $\RR^d$ and $\sigma_{t}(x)=[\sigma_{t,1}(x),\ldots,\sigma_{t,r}(x)]$ be a matrix-valued function from  $\RR^{d}$ into $\RR^{d\times r}$, for some parameters $d,r\geq 1$. Both functions will be assumed to be differentiable.
 Let  $W_t$  be an $r$-dimensional Brownian motion and  denote by $\Wa_{s,t}$ the $\sigma$-field generated by the increments $(W_u-W_v)$ of the Brownian motion, with $u,v\in [s,t]$. 
 
   For any time horizon $s\geq 0$ we denote by
  $X_{s,t}(x)$  the stochastic flow defined for any $t\in [s,\infty[$ and any starting point $X_{s,s}(x)=x\in \RR^d$ by the stochastic differential equation
    \begin{equation}\label{diff-def}
    d X_{s,t}(x)=b_t\left(X_{s,t}(x)\right)~dt+\sigma_{t}\left(X_{s,t}(x)\right)~
    dW_t
    \end{equation}
  We assume that $ x\mapsto b_t(x)$ and $ x\mapsto \sigma_t(x)$ have continuous and uniformly bounded derivatives up to the third order. This condition is clearly met for linear Gaussian models as well as for the geometric Brownian motion. This condition ensures that the stochastic flow $
 x\mapsto X_{s,t}(x)$ is a twice differentiable function of the initialisation $x$. In addition, 
  all absolute moments of the flow and the ones of  its first and second order derivatives exists for any time horizon. 
  As it is well known, dynamical systems and hence stochastic models involving drift functions with quadratic growth require additional regularity  conditions to ensure non explosion of the solution in finite time. 
  It is also implicitly assumed that all functions  $(b_t,\sigma_t)$ are smooth functions w.r.t. the time parameter. The present article develop several constructive and stochastic analysis tools including Bismut-Elworthy-Li formulae, stochastic semigroup perturbation formulae, extended two-sided stochastic integration, Malliavin calculus, gradient and Hessian semigroup processes estimates.   We are also looking for useful quantitative and  time uniform  estimates which are valid under  a single set of easily checked conditions that only depend on the parameters of the model. Various techniques presented in the article and many results can be separately and readily extended to more general models with weaker and abstract 
  custom assumptions that depend on the different quantities to handle.

 Let $\overline{X}_{s,t}(x)$ be the stochastic flow associated with a stochastic differential equation
 defined as (\ref{diff-def}) by replacing $(b_t,\sigma_t)$ by some drift and diffusion functions $(\overline{b}_t,\overline{\sigma}_t)$ with the same regularity properties. Constant diffusion functions $(\sigma_t,\overline{\sigma}_t)$ are defined by
\begin{equation}\label{constant-sigma}
\sigma_t(x)=\Sigma_t\quad \mbox{\rm and}\quad \overline{\sigma}_t(x)=\overline{\Sigma}_t\quad \mbox{\rm for some matrices  $\Sigma_t$ and $\overline{\Sigma}_t$.}\quad 
\end{equation}
In this context, we will assume that $\Sigma_t$ and $\overline{\Sigma}_t$ are uniformly bounded w.r.t. the time horizon.

  The  Markov transition semigroups associated with the flows $X_{s,t}(x)$ and
  $\overline{X}_{s,t}(x)$  are defined for any measurable function $f$
on $\RR^d$ by the formula
$$
 P_{s,t}(f)(x):=\EE\left(f(X_{s,t}(x))\right)\quad \mbox{\rm and}\quad  \overline{P}_{s,t}(f)(x):=\EE\left(f(\overline{X}_{s,t}(x))\right)
$$
 
In this paper we derive equations for the differences $(X_{s,t}-\overline{X}_{s,t})$ and $(P_{s,t}-\overline{P}_{s,t})$
 in terms of the difference of their corresponding drifts and diffusion functions, 
 \begin{equation}\label{diff-functions}
  \Delta a_t:=a_t-\overline{a}_t\qquad
 \Delta b_t:=b_t-\overline{b}_t \quad \mbox{\rm and}\quad    \Delta \sigma_t=\sigma_t-\overline{\sigma}_t
 \end{equation}
 where  $a_t(x):=\sigma_{t}(x)~\sigma_{t}(x)^{\prime}$ and  
 $\overline{a}_t(x):=\overline{\sigma}_t(x)\,\overline{\sigma}_t^{\prime}(x)$. 
 In some applications the functions $ \overline{b}_t = b_t   - \Delta b_t$  and $ \overline{\sigma}_t = \sigma_t - \Delta \sigma_t$ can be interpreted as a local perturbation of the drift and the diffusion  of the stochastic flow ${X}_{s,t}$.
  
  We also address the problem of finding time-uniform estimates for the difference between the stochastic flows $X_{s,t}$ and $\overline{X}_{s,t}$ and their corresponding Markov transition kernels $P_{s,t}$ and $\overline{P}_{s,t}$. 
  
  These important questions arise in a variety of domains including stochastic perturbation theory as well as in the stability and the qualitative theory of  stochastic systems. Classical analytic estimates on the difference between the stochastic flows driven by different drift and diffusion functions are often much too large for most diffusion processes of practical interest. In some instances none of the diffusion flows are stable. In this context, any local perturbation of the stochastic model propagates so that any global error estimate eventually tends to $\infty$ as the time horizon $t\rightarrow\infty$. 

  Whenever one of the stochastic flows is stable, classical perturbation bounds combining Lipschitz type inequalities 
  with Gronwall lemma~\cite{bellman-2,gronwall} yield exceedingly pessimistic global estimates that grows exponentially fast w.r.t. the time horizon.
Notice that
an exponential type estimate of the form $e^{\lambda t}$ for some parameter $\lambda>0$ and some time horizon $t$ s.t. $\lambda\,t\geq 199$ would induce an error bound larger than the estimated number $10^{86}$ of elementary particles of matters in the visible universe. As mentioned in~\cite{iserles} in the context of Euler scheme type approximations of deterministic dynamical systems, one may encounter situations where $\lambda=10^8$ and $t=10^2$ and the resulting exponential bounds are clearly impractical from a numerical perspective. \\

 The statement of the main results of the article  are presented  in section~\ref{sec-smr-intro}:
 \begin{enumerate}
 \item[i.] Section~\ref{sec-1-biw-intro} presents a novel generalized backward It\^o-Ventzell formula (cf. theorem~\ref{biv}).  The It\^o-Ventzell is a very important formula, arguably as useful as the It\^o's change of variable, but surprisingly  the backward It\^o-Ventzell presented in this work has never been studied before. Theorem~\ref{biv} can be seen as a new generalized backward version of the generalized It\^o-Ventzell formula presented in~\cite{ocone-pardoux}.
\item[ii.] In section~\ref{sec-sfi-intro} we apply the backward It\^o-Ventzell formula to derive a forward-backward stochastic perturbation formula that expresses  the difference between the stochastic flows $X_{s,t}$ and $\overline{X}_{s,t}$ in terms of first and second order derivatives  of the flows, which we call the tangent and Hessian processes respectively, with respect to the space parameter (cf. theorem~\ref{theo-al-gr}).
\item[iii.] Section~\ref{sec-sfi-intro} also provides a novel forward-backward  It\^o type differential formula  for interpolating stochastic diffusion flows (cf. the change of variable formula (\ref{ref-interp-du})). 

\item[iv.] In the beginning of section~\ref{sec-sfi-intro} we present a discrete time approach based on the pivotal interpolating telescoping sum formula (\ref{ref-telescoping-sum}). This  interpolating stochastic semigroup technique can be seen as an extension to stochastic flows  of the stochastic perturbation analysis developed in~\cite{dm-2000,d-2004,dm-g-99,guionnet} and in~\cite{mp-var-18,mp-var-19,bishop-19} in the context of discrete time models, matrix and nonlinear interacting processes (see also~\cite{mp-dualtiy,mp-var-19}). For a more thorough discussion on these models, we refer to section~\ref{sec-comments}.
This approach allows to derive a stochastic interpolation formula (\ref{Alekseev-grobner}) with a fluctuation term (\ref{def-S-sk}) defined by an extended two-sided stochastic integral.  

 \item[v.] Section~\ref{sec-uewrtt}  presents some natural spectral conditions on the gradients of $b_t(x), \sigma_t(x), \overline{b}_t(x)$ and  $\overline{\sigma}_t(x)$ that allows us to derive in a direct way a series of realistic uniform estimates 
with respect to the time horizon.
\end{enumerate}

The rest of the article is organized as follows: 

Section~\ref{var-sec} provides some basic tools associated with the first and second variational equations associated with a diffusion flow.
We also present some quantitative estimates of the tangent and the Hessian processes.  For a more thorough discussion on stochastic flows and their differentiability properties we refer to~\cite{carverhill,kunita,norris}.

Section~\ref{proof-theo-al-gr} is mainly concerned with the forward-backward stochastic interpolation formula (\ref{Alekseev-grobner}) stated in theorem~\ref{theo-al-gr}. 
Two approaches are presented: The first one discussed in 
section~\ref{two-sided} is based on an extension of the two-sided stochastic calculus  introduced by Pardoux and Protter in~\cite{pardoux-protter} to stochastic interpolation flows. The second one discussed in section~\ref{ref-rig}  is based on the generalized backward It\^o-Ventzell formula. This section also discusses a multivariate Skorohod-Alekseev-Gr\"obner formula. Apart from more complex and sophisticated tensor notation, the quantitative stochastic analysis of these
multivariate formulae follows the same arguments as the ones used in the proof of theorem~\ref{theo-al-gr-2}. Thus, we have chosen to concentrate this introduction on  stochastic flows.  

Some extensions of the stochastic interpolation formula
(\ref{Alekseev-grobner})
are discussed in section~\ref{sec-extensions}.

Section~\ref{sk-section} is dedicated to the analysis of the Skorohod fluctuation process introduced in (\ref{def-S-sk}). 

Section~\ref{lem-var-proof} is dedicated to  the analysis of an extended version of two-sided stochastic integrals and a generalized backward It\^o-Ventzell formula.
 
Section~\ref{sec-illustrations} presents some illustrations of the  forward-backward  interpolation formulae discussed in the present article in the context of diffusion perturbation theory, interacting diffusions and discrete time approximations.

The technical proofs of some results are housed in the appendix.

 \subsection{Statement of some main results}\label{sec-smr-intro}

 \subsubsection{A backward It\^o-Ventzell formula}\label{sec-1-biw-intro}
We represent the gradient of a real valued function of several variables as a column vector while the gradient and the Hessian of a (column) vector valued function as tensors of type $(1,1)$ and $(2,1)$, see for instance (\ref{grad-def}) and (\ref{Hessian-def}); in more layman terms a $(1,1)$ tensor is a matrix while the $(2,1)$ tensor can be visualized as a ``row of matrices'' $[A_1,\ldots, A_n]$ where the entries $A_i$ are matrices of a common dimension. We also use the tensor product and the transpose operator defined in (\ref{tensor-notation}), see also (\ref{tensor-product-2-2}).

We denote by $D_t$ the Malliavin  
derivative  from some dense domain  $\DD_{2,1}\subset\LL_2(\Omega)$ into the space $\LL_2(\Omega\times\RR_+;\RR^r)$.  For multivariate $d$-column vector random variables $F$ with entries $F^j$, we use the same rules as for the gradient and $D_t F$ is the $(r,p)$-matrix with entries $(D_t F)_{i,j}:=D_t^iF^j$. For  $(p\times q)$-matrices $F$ with entries $F^j_{k}$ we let $D_tF$ be the tensor with entries 
$
(D_tF)_{i,j,k}=D^i _tF^j_{k}
$. 

For a more thorough discussion on Malliavin derivatives and   Skorohod integration we refer to section~\ref{sec-malliavin}.

Let $F$ be some function from $\RR^p$ into $\RR^q$, and let $y\in\RR^p$ be some given state, for some $p,q\geq 1$.  Suppose we are given a forward $p$-dimensional continuous semi-martingale $Y_{s,t}$ and a backward random  field $F_{s,t}$ from $\RR^p$ into $\RR^q$ with a column-vector type canonical representation of the following form:
  \begin{equation}\label{backward-random-fields}
 \left\{ 
 \begin{array}{rcl}
\displaystyle Y_{s,t}&=&\displaystyle y+\int_s^t~B_{s,u}~du+\int_s^t~\Sigma_{s,u}~dW_u\\
 \displaystyle  F_{s,t}(x)&=&\displaystyle F(x)+\int_s^t~G_{u,t}(x)~du+\int_s^t~H_{u,t}(x)~dW_u
   \end{array}\right.
\end{equation}
for some  $\Wa_{s,t}$-adapted functions
$B_{s,t},G_{s,t},H_{s,t},\Sigma_{s,t}$ with appropriate dimensions and satisfying
 the following conditions:\\

{\it
$(H_1)$: The functions $F_{s,t}$, $G_{u,t}$ and $H_{u,t}$ as well as $\nabla H_{u,t}$, $\nabla^2F_{u,t}$ and the derivatives  $ D_v\nabla F_{u,t}$ and $D_vH_{u,t}$  are  continuous w.r.t. the state and the time variables for any given $\omega\in\Omega$. 

$(H_2)$ The function $G_{u,t},\nabla H_{u,t}, \nabla^2 F_{u,t}$, and the  derivatives
$D_vH_{u,t},D_v\nabla F_{u,t}$ have at most polynomial growth w.r.t. the state variable, uniformly with respect to $\omega\in \Omega$.

$(H_3)$ The processes   $B_{s,u},\Sigma_{s,u}$ as well as $D_v\Sigma_{s,u}$ are continuous and have moments of any order. }\\

In this notation, the first main result of this article is the following theorem.
  \begin{theo}\label{biv}
Assume conditions $(H_i)_{i=1,2,3}$ are satisfied. In this situation, for any $s\leq u\leq v\leq t$ we have the generalized backward It\^o-Ventzell formula 
 \begin{equation}\label{backward-ito-v}
   \begin{array}{l}
   \displaystyle F_{v,t}(Y_{s,v})-F_{u,t}(Y_{s,u})   =\int_u^v(  \nabla  F_{r,t}(Y_{s,r})^{\prime}~B_{s,r} +\frac{1}{2}~\nabla^2  F_{r,t}(Y_{s,r})^{\prime}~\Sigma_{s,r}\Sigma_{s,r}^{\prime}-G_{r,t}(Y_{s,r}))~dr\\
\\
\hskip5cm+   \displaystyle\int_u^v~\left(
      \nabla F_{r,t}(Y_{s,r})^{\prime}~\Sigma_{s,r}-H_{r,t}(Y_{s,r})\right)~dW_r
      \end{array}
\end{equation}
The stochastic anticipating integral in the r.h.s. of~\ref{backward-ito-v} is understood as a Skorohod stochastic integral.
  \end{theo}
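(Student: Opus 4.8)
The plan is to prove (\ref{backward-ito-v}) by a time discretisation of $[u,v]$ that exploits the opposite ``directions of adaptedness'' of the two ingredients: the backward field $F_{r,t}$ depends on the Brownian increments over $[r,t]$ (its coefficients $G_{r,t},H_{r,t}$ in (\ref{backward-random-fields}) being $\Wa_{r,t}$-measurable), whereas the forward semimartingale $Y_{s,t}$ is driven by increments over $[s,t]$. Fix a partition $\pi=\{u=r_0<r_1<\cdots<r_n=v\}$ with mesh tending to $0$ and telescope,
\begin{equation*}
F_{v,t}(Y_{s,v})-F_{u,t}(Y_{s,u})=\sum_{k=0}^{n-1}\Big(F_{r_{k+1},t}(Y_{s,r_{k+1}})-F_{r_k,t}(Y_{s,r_k})\Big),
\end{equation*}
splitting each summand into a part that moves $Y$ with the field frozen at the later node and a part that moves the field with $Y$ frozen at the earlier node:
\begin{equation*}
\big(F_{r_{k+1},t}(Y_{s,r_{k+1}})-F_{r_{k+1},t}(Y_{s,r_k})\big)+\big(F_{r_{k+1},t}(Y_{s,r_k})-F_{r_k,t}(Y_{s,r_k})\big).
\end{equation*}
This choice of splitting is the crucial point. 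In the first bracket the random smooth map $F_{r_{k+1},t}$ is $\Wa_{r_{k+1},t}$-measurable while $Y_{s,r_k}$ and $\Sigma_{s,r_k}$ are $\Wa_{s,r_k}$-measurable, so they are jointly independent of the increment $W_{r_{k+1}}-W_{r_k}$ that will carry the leading variation of $Y$ on $[r_k,r_{k+1}]$; in the second bracket the argument $Y_{s,r_k}$ is frozen before that interval, over which the backward increment of $F$ is then taken.

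For the first bracket I would Taylor-expand $F_{r_{k+1},t}$ to second order about $Y_{s,r_k}$ and substitute $Y_{s,r_{k+1}}-Y_{s,r_k}=\int_{r_k}^{r_{k+1}}B_{s,w}\,dw+\int_{r_k}^{r_{k+1}}\Sigma_{s,w}\,dW_w$. Standard It\^o-type estimates --- using the moment bounds of $(H_3)$ on $B,\Sigma$, the polynomial growth of $\nabla^2F$ supplied by $(H_1)$--$(H_2)$, and the fact (a consequence of $(H_3)$) that $Y_{s,r}$ has moments of all orders --- turn the sum of the second-order terms into $\tfrac12\int_u^v\nabla^2F_{r,t}(Y_{s,r})'\,\Sigma_{s,r}\Sigma_{s,r}'\,dr$ and the sum of the first-order drift terms into $\int_u^v\nabla F_{r,t}(Y_{s,r})'B_{s,r}\,dr$. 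The first-order Brownian terms reduce, modulo lower-order corrections, to the Riemann sum $\sum_k\nabla F_{r_{k+1},t}(Y_{s,r_k})'\Sigma_{s,r_k}\,(W_{r_{k+1}}-W_{r_k})$; this is exactly the Skorohod integral of the step process whose value on $[r_k,r_{k+1})$ is the $\Wa_{r_k,r_{k+1}}$-independent random variable $\nabla F_{r_{k+1},t}(Y_{s,r_k})'\Sigma_{s,r_k}$, and it converges to the Skorohod integral $\int_u^v\nabla F_{r,t}(Y_{s,r})'\Sigma_{s,r}\,dW_r$ --- crucially with \emph{no} additional trace (It\^o) correction, precisely because of the ``gap'' between $[s,r_k]\cup[r_{k+1},t]$ and $[r_k,r_{k+1}]$.

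For the second bracket, (\ref{backward-random-fields}) gives directly
\begin{equation*}
F_{r_{k+1},t}(Y_{s,r_k})-F_{r_k,t}(Y_{s,r_k})=-\int_{r_k}^{r_{k+1}}G_{w,t}(Y_{s,r_k})\,dw-\int_{r_k}^{r_{k+1}}H_{w,t}(Y_{s,r_k})\,dW_w,
\end{equation*}
the last being a backward It\^o integral (understood in the extended two-sided sense of Section~\ref{lem-var-proof}). The drift parts sum to $-\int_u^v G_{r,t}(Y_{s,r})\,dr$; expanding $H_{w,t}$ backwards around $H_{r_{k+1},t}$ and using once more that $H_{r_{k+1},t}(Y_{s,r_k})$ is $(\Wa_{r_{k+1},t}\vee\Wa_{s,r_k})$-measurable, hence independent of $W_{r_{k+1}}-W_{r_k}$, the Brownian parts sum to the Skorohod integral $-\int_u^v H_{r,t}(Y_{s,r})\,dW_r$. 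Adding the two brackets and letting the mesh of $\pi$ tend to $0$ yields the right-hand side of (\ref{backward-ito-v}), with drift integrand $\nabla F_{r,t}(Y_{s,r})'B_{s,r}+\tfrac12\nabla^2F_{r,t}(Y_{s,r})'\Sigma_{s,r}\Sigma_{s,r}'-G_{r,t}(Y_{s,r})$ and Skorohod integrand $\nabla F_{r,t}(Y_{s,r})'\Sigma_{s,r}-H_{r,t}(Y_{s,r})$. The decomposition also explains why, in contrast with the classical forward It\^o-Ventzell formula, no cross-variation term between $F$ and $Y$ survives: at each time $r$ the processes $Y_{s,r}$ and $F_{r,t}$ depend on Brownian increments over the disjoint intervals $[s,r]$ and $[r,t]$.

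The technical heart --- and the step I expect to be the main obstacle --- is the justification of the two limiting passages above as the mesh of $\pi$ tends to $0$. The integrands are not adapted, so no Burkholder--Davis--Gundy or It\^o-isometry argument is available; instead one uses the Skorohod-integral bound $\EE[\delta(Z)^2]\leq\EE\int_u^v\|Z_r\|^2\,dr+\EE\int_u^v\!\!\int_u^v\|D_\theta Z_r\|^2\,d\theta\,dr$ and shows that the step integrands above, together with their Malliavin derivatives, converge in $\LL_2(\Omega\times[u,v];\RR^r)$ to $Z_r:=\nabla F_{r,t}(Y_{s,r})'\Sigma_{s,r}-H_{r,t}(Y_{s,r})$; continuity of $\delta$ on this space then delivers the claimed limit. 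This is exactly what $(H_1)$--$(H_3)$ are tailored for: continuity in the state and time variables gives pointwise convergence; the polynomial growth of $G,\nabla H,\nabla^2F$ and of the Malliavin derivatives $D_\theta\nabla F,D_\theta H$, combined with the moment bounds on $Y$ and on $B,\Sigma,D_\theta\Sigma$, upgrades it to $\LL_2$ convergence; and the chain rule expresses $D_\theta Z_r$ through the processes $D_\theta\nabla F_{r,t},\ \nabla^2F_{r,t},\ D_\theta Y_{s,r},\ D_\theta\Sigma_{s,r},\ D_\theta H_{r,t}$ and $\nabla H_{r,t}$, all of which are under control. It remains to bound the discarded remainders --- the third-order Taylor remainder of the first bracket and the backward remainder $\int_{r_k}^{r_{k+1}}\big(H_{w,t}-H_{r_{k+1},t}\big)(Y_{s,r_k})\,dW_w$ of the second --- which is routine given the stated regularity. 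A less computational alternative deduces (\ref{backward-ito-v}) from the forward--backward product rule built with the extended two-sided stochastic calculus of Section~\ref{lem-var-proof}; the discretisation above is, however, the most direct self-contained route.
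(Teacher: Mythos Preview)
Your discretisation/telescoping approach is sound in outline, but it is \emph{not} the route taken in the paper. The paper's proof (Theorem~\ref{theo-iv}) follows the classical mollifier technique of Bismut and Ocone--Pardoux: for each fixed spatial point $x$ one applies the anticipating product rule (Nualart--Pardoux, proposition~8.2) to $F_t(x)\,\varphi_\epsilon(Y_t-x)$, integrates over $x$ via the Skorohod--Fubini theorem, integrates by parts to throw the derivatives of $\varphi_\epsilon$ onto $F$, and then lets $\epsilon\to 0$ using dominated convergence in $\LL_{2,1}$. No time discretisation enters.

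Your approach is closer in spirit to the paper's Section~\ref{two-sided} treatment of the interpolation formula and to the Pardoux--Protter two-sided calculus: it makes the ``disjoint-increments'' mechanism (and hence the absence of a cross-variation term) completely explicit, and is self-contained in that it does not invoke an off-the-shelf anticipating product rule. The price is that the limiting step --- convergence of the step integrands and their Malliavin derivatives in $\LL_2(\Omega\times[u,v])$ --- has to be done by hand, and the third-order Taylor remainder and the backward $H$-remainder must be estimated carefully; under $(H_1)$--$(H_3)$ this is feasible but laborious. The paper's mollifier route trades these remainder estimates for a Fubini argument and a single $\epsilon\to 0$ passage, reusing machinery already available for the forward It\^o--Ventzell formula. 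Both proofs ultimately rest on the same $\LL_{2,1}$ continuity of the Skorohod integral; they differ in whether the approximation is in the time variable (yours) or in the state variable via convolution (the paper's).
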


 The above theorem can be seen as  the backward version of the generalized It\^o-Ventzell formula presented in~\cite{ocone-pardoux,pardoux-90}.   The proof of the above theorem is provided in section~\ref{biv-proof} (see theorem~\ref{theo-iv}).

 Conventional forward and backward It\^o stochastic integrals are particular instances of the two-sided stochastic integrals introduced by Pardoux and Protter in~\cite{pardoux-protter}. The terminology "  two-sided " coined by the authors in~\cite{pardoux-protter} comes from the fact that the integrand of the Skorohod integral depend on the past as well as on the future of the history generated by the Brownian motion.
 
 The stochastic anticipating integral in the r.h.s. of (\ref{backward-ito-v}) involves a backward random field and a forward semimartingale, thus it is tempting to interpret this integral as a two sided integral. Unfortunately, this class of integrands are not considered in the construction of the two-sided stochastic integrals defined in~\cite{pardoux-protter}. In section~\ref{two-sided} and section~\ref{sec-extended-2-sided} we shall present an extended version of the two-sided stochastic integrals introduced in~\cite{pardoux-protter} that applies to integrands defined as a compositions of backward and forward stochastic flows. This extended 
 version applies to backward stochastic flows but it doesn't encapsulate more general backward random fields. 
We believe more general extensions of the two-sided integrals can be developed but it is out of the scope of this article to develop a  theory on generalized two-sided stochastic integrals. We finally mention that all two-sided stochastic integrals discussed in this article are particular instances of Skorohod integrals

\subsubsection{A stochastic flow interpolation formula}\label{sec-sfi-intro}

The diffusion flow (\ref{diff-def}) is defined
in term of a column vector with twice continuously differentiable entries.
For  $h\simeq 0$ we use 
 the backward approximation:
  \begin{equation}\label{Alekseev-grobner-intro-0}
 \begin{array}{l}
 X_{s,t}(x)- X_{s-h ,t}(x)
 =X_{s,t}(x)-(X_{s,t}\circ X_{s-h ,s})(x)\\
 \\
 \simeq X_{s,t}(x)-X_{s,t}\left(x+b_s(x)~h +\sigma_s(x)~\left(W_{s}-W_{s-h }\right)\right)\\
 \\
\displaystyle \simeq -\left[\left( \nabla X_{s,t}(x)^{\prime}~b_s(x)+\frac{1}{2}~\nabla^2 X_{s,t}(x)^{\prime}~a_s(x)\right)~h + \nabla X_{s,t}(x)^{\prime}\sigma_s(x)~(W_{s}-W_{s-h})\right]
\end{array}
 \end{equation}
 In the above display, $X_{s,t}\circ X_{s-h ,s}$ stands for the composition of the mappings $X_{s,t}$ and $X_{s-h ,s}$.
 
The above approximations are  rigorously justified  in section~\ref{two-sided} and lead to the backward stochastic flow evolution equation:
\begin{equation}\label{backward-synthetic}
\begin{array}{l}
d_s X_{s,t}(x)
=-\left[\left( \nabla X_{s,t}(x)^{\prime}~b_s(x)+\frac{1}{2}~\nabla ^2 X_{s,t}(x)^{\prime}~a_s(x)\right)~ds+\nabla X_{s,t}(x)^{\prime}\sigma_s(x)~dW_s\right]
\end{array}
\end{equation}
In the above display, $d_s X^i_{s,t}(x)$ represents the change in $X^i_{s,t}(x)$ w.r.t. the variable $s$.

 In the same vein,   for any $s< u< t$  we have the interpolating semigroup decompositions
$$
\begin{array}{l}
X_{u+h,t}\circ \overline{X}_{s,u+h}-X_{u,t}\circ \overline{X}_{s,u}\\
\\
=(X_{u+h,t}-X_{u,t})\circ \overline{X}_{s,u}+\left(X_{u+h,t}\circ \overline{X}_{s,u+h}-X_{u+h,t}\circ\overline{X}_{s,u}\right)
\end{array}$$
as well as the forward approximations
  \begin{equation}\label{Alekseev-grobner-intro-ref-2}
\begin{array}{l}
X_{u+h,t}\left(\,\overline{X}_{s,u}(x)+\left(\overline{X}_{s,u+h}(x)-\overline{X}_{s,u}(x)\right)\right)-X_{u+h,t}(\overline{X}_{s,u}(x))\\
\\
\displaystyle \simeq\left(\nabla X_{u+h,t}\right)(\overline{X}_{s,u}(x))^{\prime}~(\overline{X}_{s,u+h}(x)-\overline{X}_{s,u}(x))
+\frac{1}{2}\,\left(\nabla ^2X_{u+h,t}\right)(\overline{X}_{s,u}(x))^{\prime}~\overline{a}_u(\overline{X}_{s,u}(x))~h
\end{array}
\end{equation}
The above approximations are  rigorously justified  in section~\ref{two-sided} and lead to the forward-backward stochastic  interpolation equation
  \begin{equation}\label{ref-interp-du}
 \begin{array}{l}
d_u\left(X_{u,t}\circ \overline{X}_{s,u}\right)(x)\\
\\
\displaystyle=\left(d_uX_{u,t}\right)(\overline{X}_{s,u}(x))+\left(\nabla X_{u,t}\right)(\overline{X}_{s,u}(x))^{\prime}~d_u\overline{X}_{s,u}(x)
+\frac{1}{2}\,\left(\nabla ^2X_{u,t}\right)(\overline{X}_{s,u}(x))^{\prime}~\overline{a}_u(\overline{X}_{s,u}(x))~du
 \end{array}
 \end{equation}

 The discrete time version of the forward-backward stochastic formula in the above display reduces to the  telescoping sum formula (\ref{ref-telescoping-sum}) and the second order Taylor expansions discussed in section~\ref{two-sided}. We already mention that  (\ref{ref-telescoping-sum}) can be interpreted as a discrete time version of the  Alekseev-Gr\"obner lemma~\cite{Alekseev,grobner}.    The terminology {\em forward-backward} comes from the  forward and backward nature of (\ref{ref-interp-du}) and the telescoping sum formula (\ref{ref-telescoping-sum}).
 
Also notice that (\ref{backward-synthetic}) can also be deduced formally from (\ref{ref-interp-du}) by replacing $\overline{X}_{s,u}$ by the stochastic flow ${X}_{s,u}$ in  (\ref{ref-interp-du}), and then letting $s=u$. 
   
 This yields the following interpolation theorem.
\begin{theo}\label{theo-al-gr}
We have the forward-backward stochastic interpolation formula
  \begin{equation}\label{Alekseev-grobner}
  X_{s,t}(x)-\overline{X}_{s,t}(x)=T_{s,t}(\Delta a,\Delta b)(x)+S_{s,t}(\Delta \sigma)(x)
 \end{equation}
with the stochastic process 
  \begin{equation}\label{def-T-st}
 \begin{array}{l}
\displaystyle 
T_{s,t}(\Delta a,\Delta b)(x)\\
\\
\displaystyle := \int_s^t\left[\left(\nabla X_{u,t}\right)(\overline{X}_{s,u}(x))^{\prime}~\Delta b_u (\overline{X}_{s,u}(x))+\frac{1}{2}~\left(\nabla ^2X_{u,t}\right)(\overline{X}_{s,u}(x))^{\prime}~\Delta a_u(\overline{X}_{s,u}(x))\right]~du
\end{array}
 \end{equation}
and  the  fluctuation term given by the Skorohod stochastic integral
  \begin{equation}\label{def-S-sk}
 S_{s,t}(\Delta \sigma)(x):=\int_s^t~\left(\nabla X_{u,t}\right)(\overline{X}_{s,u}(x))^{\prime}~\Delta\sigma_u(\overline{X}_{s,u}(x))~dW_u
 \end{equation}
The fluctuation term in the above display can also be seen as the extended two-sided stochastic integral
defined in (\ref{sk-integral}) (see also proposition~\ref{k-prop}).
 \end{theo}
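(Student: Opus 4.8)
The plan is to derive (\ref{Alekseev-grobner}) from a single telescoping identity combined with the backward It\^o--Ventzell formula of Theorem~\ref{biv}. First I would observe that, since $X_{t,t}=\mathrm{Id}$ and $\overline{X}_{s,s}=\mathrm{Id}$, the composed flow $u\mapsto (X_{u,t}\circ\overline{X}_{s,u})(x)$ interpolates between $X_{s,t}(x)$ at $u=s$ and $\overline{X}_{s,t}(x)$ at $u=t$, so that
\[
X_{s,t}(x)-\overline{X}_{s,t}(x)=-\int_s^t d_u\big(X_{u,t}\circ\overline{X}_{s,u}\big)(x).
\]
Everything then reduces to the forward-backward differential identity (\ref{ref-interp-du}) for $d_u(X_{u,t}\circ\overline{X}_{s,u})(x)$, which I would establish rigorously in one of the two ways indicated in the paper, and which after integration over $[s,t]$ and collecting terms should produce exactly $T_{s,t}(\Delta a,\Delta b)(x)+S_{s,t}(\Delta\sigma)(x)$.

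The cleanest route is to apply Theorem~\ref{biv} directly, with the forward semimartingale $Y_{s,u}:=\overline{X}_{s,u}(x)$, whose coefficients in (\ref{backward-random-fields}) are $B_{s,u}=\overline{b}_u(\overline{X}_{s,u}(x))$ and $\Sigma_{s,u}=\overline{\sigma}_u(\overline{X}_{s,u}(x))$ (read off from the SDE~(\ref{diff-def}) for $\overline{X}$), and with the backward random field $F_{u,t}:=X_{u,t}$. The key input is the backward evolution equation (\ref{backward-synthetic}): it states precisely that $X_{u,t}$ has the canonical backward representation of (\ref{backward-random-fields}) with $G_{u,t}(z)=\nabla X_{u,t}(z)^{\prime}b_u(z)+\tfrac12\nabla^2 X_{u,t}(z)^{\prime}a_u(z)$ and $H_{u,t}(z)=\nabla X_{u,t}(z)^{\prime}\sigma_u(z)$, together with $F_{t,t}=\mathrm{Id}$. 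Plugging these four coefficients into (\ref{backward-ito-v}) with the endpoints $s$ and $t$, the left-hand side becomes $\overline{X}_{s,t}(x)-X_{s,t}(x)$; the drift integrand becomes $\nabla X_{u,t}(Y)^{\prime}(\overline{b}_u-b_u)(Y)+\tfrac12\nabla^2 X_{u,t}(Y)^{\prime}(\overline{a}_u-a_u)(Y)=-\big[\nabla X_{u,t}(Y)^{\prime}\Delta b_u(Y)+\tfrac12\nabla^2 X_{u,t}(Y)^{\prime}\Delta a_u(Y)\big]$ with $Y=\overline{X}_{s,u}(x)$; and the Skorohod integrand becomes $\nabla X_{u,t}(Y)^{\prime}(\overline{\sigma}_u-\sigma_u)(Y)=-\nabla X_{u,t}(Y)^{\prime}\Delta\sigma_u(Y)$, using the definitions (\ref{diff-functions}). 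Moving the overall minus sign through, this is exactly $T_{s,t}(\Delta a,\Delta b)(x)+S_{s,t}(\Delta\sigma)(x)$ as defined in (\ref{def-T-st})--(\ref{def-S-sk}); in particular the fluctuation term is automatically a well-defined Skorohod integral since Theorem~\ref{biv} produces it as one.

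Before invoking Theorem~\ref{biv} I would verify its hypotheses $(H_1)$--$(H_3)$ for these choices. The requirements on $B_{s,u},\Sigma_{s,u},D_v\Sigma_{s,u}$ follow from the smoothness and uniform boundedness assumptions on $(\overline{b},\overline{\sigma})$, the chain rule for the Malliavin derivative, and the moment bounds on $\overline{X}$ and on its tangent/Malliavin processes; the requirements on $G_{u,t},H_{u,t},\nabla H_{u,t},\nabla^2 F_{u,t},D_v\nabla F_{u,t},D_vH_{u,t}$ follow from the continuity, polynomial growth and moment estimates for the tangent process $\nabla X_{u,t}$ and the Hessian process $\nabla^2 X_{u,t}$ developed in Section~\ref{var-sec}. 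The one genuinely substantial point --- and what I expect to be the main obstacle --- is establishing the backward evolution (\ref{backward-synthetic}) itself, i.e.\ that the flow of~(\ref{diff-def}), read as a function of its initial time $s$, solves that anticipating backward SDE with the stated second-order It\^o correction $\tfrac12\nabla^2X_{s,t}(x)^{\prime}a_s(x)$. I would obtain this either via the extended two-sided stochastic calculus of Section~\ref{two-sided}, rigorously justifying the second-order Taylor expansion (\ref{Alekseev-grobner-intro-0}), or --- avoiding circularity --- from the backward It\^o--Ventzell formula applied to the identity map, as announced in Section~\ref{ref-rig}.

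An alternative, more self-contained proof proceeds through the discrete-time telescoping sum (\ref{ref-telescoping-sum}): along a partition of $[s,t]$, write $X_{s,t}(x)-\overline{X}_{s,t}(x)$ as a sum of increments $(X_{t_{k+1},t}\circ\overline{X}_{s,t_{k+1}})-(X_{t_k,t}\circ\overline{X}_{s,t_k})$, Taylor-expand each increment to second order in the increment of $\overline{X}$ over $[t_k,t_{k+1}]$ and to first order in the backward increment of $X$ over the same interval, and then show that the Riemann sums of the martingale pieces converge in $\LL_2$ to the Skorohod integral $S_{s,t}(\Delta\sigma)(x)$ while the Taylor remainders vanish in $\LL_2$ as the mesh tends to $0$. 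The delicate part there is the uniform $\LL_2$-control of the remainders and the identification of the limit of the anticipating Riemann sums with the Skorohod integral, both of which again rest on the tangent and Hessian estimates of Section~\ref{var-sec}.
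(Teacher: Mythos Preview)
Your proposal is correct and mirrors the paper's own two-pronged proof: the primary route via Theorem~\ref{biv} with $F_{u,t}=X_{u,t}$, $Y_{s,u}=\overline{X}_{s,u}(x)$ and $G,H$ read off from the backward flow equation (\ref{ref-backward-flow}) is exactly the argument of Section~\ref{ref-rig}, and your alternative telescoping/two-sided-integral route is the argument of Section~\ref{two-sided}. Your circularity worry is unfounded: the backward evolution (\ref{ref-backward-flow}) is an independently established classical result (the paper cites \cite{daprato-3,kunita-2,krylov}), so it may be used as input to Theorem~\ref{biv} without logical dependence on Theorem~\ref{theo-al-gr}.
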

 
 These interpolation formulae combine the backward evolution (\ref{backward-synthetic}) with the conventional forward evolution of the perturbed flow.
 
The proof of the interpolation formula (\ref{Alekseev-grobner})   is provided in section~\ref{proof-theo-al-gr}.

We will present two different approaches: 
The first one presented in section~\ref{two-sided} is rather elementary and very intuitive. It combines the conventional It\^o-type discrete time approximations of stochastic integrals
discussed above with the two-sided stochastic integration calculus introduced in~\cite{pardoux-protter}. Using this approximation technique the
fluctuation term is defined by the extended two-sided stochastic integral
defined in (\ref{sk-integral}).
 In this interpretation, the equation  (\ref{Alekseev-grobner}) can be seen as an extended version of the It\^o-type change rule formula stated in theorem 6.1 in the article~\cite{pardoux-protter} to the interpolating flow
 \begin{equation}\label{interpolating-flow}
 Z^{s,t}~:~ u\in [s,t]~\mapsto~ Z^{s,t}_u:=X_{u,t}\circ \overline{X}_{s,u}\quad\Longrightarrow\quad Z^{s,t}_{s}-Z^{s,t}_{t}= X_{s,t}-\overline{X}_{s,t}
 \end{equation}
  Roughly speaking, the increments of the interpolating path
 are decomposed into two parts:
 
  One comes from the backward increments of the flow $u\mapsto X_{u,t}$ given the past values of the stochastic flow $\overline{X}_{s,u}$. 
  The other one comes from the conventional It\^o increments of $u\mapsto \overline{X}_{s,u}$ given the future values of the stochastic flow $X_{u,t}$.
  
The second approach discussed in section~\ref{ref-rig} is based on the generalized backward It\^o-Ventzell formula stated in theorem~\ref{biv}.
More precisely we also recover  (\ref{Alekseev-grobner}) from (\ref{backward-ito-v}) by choosing
\begin{eqnarray*}
(F_{s,t}(x),Y_{s,t}(y))&=&(X_{s,t}(x),\overline{X}_{s,t}(y))\qquad
(B_{s,t},\Sigma_{s,t})=\left(\overline{b}_t\left(\overline{X}_{s,t}(x)\right),\overline{\sigma}_{t}\left(\overline{X}_{s,t}(x)\right)\right)\\
 G _{u,t}(x)&=&\nabla  F_{u,t}(x)^{\prime}~b_u(x) +\frac{1}{2}~\nabla^2  F_{u,t}(x)^{\prime}~a_{u}(x)\quad \mbox{\rm and}\quad
H _{u,t}(x)=\nabla F_{u,t}(x)^{\prime}~\sigma_u(x)
\end{eqnarray*}
and letting $(u,v)=(s,t)$ in (\ref{backward-ito-v}).  The regularity conditions on the drift and the diffusion function ensure that conditions $(H_i)_i$ with $i=1,2,3$ stated in section~\ref{sec-1-biw-intro} are satisfied.

We emphasize that the backward diffusion flow discussed in (\ref{backward-synthetic}) and (\ref{ref-backward-flow}) is essential to apply theorem~\ref{biv}. Section~\ref{ref-rig} also provides a multivariate version of (\ref{Alekseev-grobner}).


The interpolation formula (\ref{Alekseev-grobner}) with a fluctuation term given 
by the Skorohod stochastic integral (\ref{def-S-sk}) can be seen as a Alekseev-Gr\"obner formula of Skorohod type.  

In this context, the integrability of the fluctuation term and any quantitative type
estimates require a refined analysis of the Malliavin derivatives of the integrand.   Under our regularity 
conditions  the stochastic flows $X_{s,t}(x)$ and $\overline{X}_{s,t}(x)$ are Holder-continuous w.r.t. the time parameters 
  as well as twice differentiable w.r.t. the space variables, with almost sure uniformly bounded first and second order derivatives. In addition, for any $n\geq 1$
  all the $n$-absolute moments of the stochastic flows 
  are finite with at most linear growth w.r.t. the initial values. 
  These properties ensure that  the Skorohod stochastic integral (\ref{def-S-sk}) is well defined and they allow to derive several quantitative estimates.  Section~\ref{sk-section} provides a refined of the fluctuation term; see for instance theorem~\ref{theo-quantitative-sko}.

When $\sigma_t=0$ the flow $X_{s,t}(x)$ is deterministic so that  the Skorohod fluctuation term (\ref{def-S-sk}) reduces to the traditional It\^o stochastic integral. In this context, 
quantitative estimates of the fluctuation term are obtained combining  Burkholder-Davis-Gundy inequalities with the generalized Minkowski inequality.
The resulting interpolation formula (\ref{Alekseev-grobner}) can be seen as a Alekseev-Gr\"obner formula of It\^o-type.  

To distinguish these two classes of models, the
 interpolation formulae (\ref{Alekseev-grobner}) associated with the case $\sigma_t=0$
will be called an It\^o-Alekseev-Gr\"obner  formula; the one associated with the case $\Delta\sigma_t\not=0$ will be called a Skorohod-Alekseev-Gr\"obner  formula.

\subsubsection{Uniform estimates w.r.t. the time horizon}\label{sec-uewrtt}

The final objective of this article is to derive uniform estimates w.r.t. the time parameter.
Our methodology is mainly based on two different types of regularity conditions to be defined and discussed in detail in section~\ref{regularity-sec}:

 $\bullet$ The first is a technical condition
that ensures that the 
$n$-absolute moments of the flows $ X_{s,t}$ and $\overline{X}_{s,t}$ are uniformly bounded w.r.t. the time horizon; we call this condition  $(M)_{n}$.   

 $\bullet$  The second is a spectral condition on the gradient of the drift and diffusion matrices of the stochastic flows, which we call condition $(T)_{n}$.   Without going into details, we state one usual case of interest: for constant diffusion functions (\ref{constant-sigma})
 the spectral condition $(T)_{n}$ is met for any $n\geq 2$  as soon as the following log-norm
conditions are met
\begin{equation}\label{T2-intro}
\nabla b_t+(\nabla b_t)^{\prime}\leq -2\lambda~I\quad \mbox{\rm and}\quad \nabla \overline{b}_t+(\nabla  \overline{b}_t)^{\prime}\leq -2\overline{\lambda}~I
\quad\mbox{\rm
for some $\lambda\wedge\overline{\lambda}>0,$}
\end{equation}

To motivate the above condition consider a linear drift function of the form $b_t(x)=B_t~x$
and $\sigma=0$. In this case the tangent process $\nabla X_{s,t}(x)$ satisfies a time-varying deterministic linear dynamical system 
$$
\partial_t  \,\nabla X_{s,t}(x)=\nabla X_{s,t}(x)~ B_t^{\prime}
$$
The asymptotic behavior of this process  cannot be characterized by the statistical properties of the spectral abscissa of the matrices $B_t$. Indeed, unstable semigroups  associated with time-varying (deterministic) matrices $B_t$ with negative eigenvalues are exemplified in~\cite{coppel1978stability,wu1974note}. Conversely, stable semigroups with $B_t$ having positive eigenvalues are given by Wu in~\cite{wu1974note}. 
 In contrast, the uniform log-norm condition (\ref{T2-intro}) provides a readily verifiable condition.

 To describe with some precision the second main result of the  article, we need to introduce some additional terminology.   
 When there is no ambiguity, we denote by $\Vert\point\Vert$ any (equivalent) norm on some finite dimensional vector space.
 For some multivariate function $f_t(x)$, for $(t,x)\in [0,\infty)\times\RR^d$, 
let $\Vert f(x)\Vert:=\sup_{t}\Vert f_t(x)\Vert$ and the uniform norm be $\Vert f\Vert:=\sup_{t,x}\Vert f_t(x)\Vert$. 
For any $n\geq 1$ we also set
\begin{equation}\label{ref-vertiii}
\vertiii{f(x)}_n:=\sup_{s\geq 0}\sup_{t\geq s}\EE\left(\Vert  f_t (\overline{X}_{s,t}(x))\Vert^{n}\right)^{1/n}
\end{equation}
We denote by $\kappa_n$ and $\kappa_{\delta,n}$   some constants that depend on some parameters $n$ and $(\delta,n)$ but  do not depend  on the time horizon, nor on the space variable.

In this notation, the second main result of the article takes basically the following form.

\begin{theo}\label{theo-al-gr-2}
Assume conditions $(M)_{2n/\delta}$ and  $(T)_{2n/(1-\delta)}$ are satisfied for some parameters $n\geq 2$ and $\delta\in ]0,1[$. In this situation, we have the time-uniform estimates
  \begin{equation}\label{intro-inq-1}
   \begin{array}{l}
\displaystyle
  \EE\left[\Vert   X_{s,t}(x)-\overline{X}_{s,t}(x)\Vert^{n}\right]^{1/n}\\
  \\
\displaystyle\leq  \kappa_{\delta,n}~\left(\vertiii{\Delta a(x)}_{2n/(1+\delta)}+\vertiii{\Delta b(x)}_{2n/(1+\delta)}
+\vertiii{\Delta\sigma(x)}_{2n/\delta}~(1\vee\Vert x\Vert)\right)\end{array} 
 \end{equation}
For constant diffusion functions (\ref{constant-sigma}),  the estimate simplifies to 
  \begin{equation}\label{intro-inq-2}
 \displaystyle  (\ref{T2-intro})\Longrightarrow \forall n\geq 2\quad\EE\left[\Vert   X_{s,t}(x)-\overline{X}_{s,t}(x)\Vert^{n}\right]^{1/n}
\leq \kappa_n~\left(\vertiii{\Delta b(x)}_{n}+\Vert \Sigma-\overline{\Sigma}\Vert\right)    
 \end{equation}
 \end{theo}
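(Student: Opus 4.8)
The plan is to deduce both inequalities from the forward--backward interpolation formula (\ref{Alekseev-grobner}), estimating separately in $\LL_n$, and uniformly in $s\leq t$, the drift--type process $T_{s,t}(\Delta a,\Delta b)(x)$ of (\ref{def-T-st}) and the Skorohod fluctuation $S_{s,t}(\Delta \sigma)(x)$ of (\ref{def-S-sk}). The single mechanism that makes a \emph{time--uniform} bound possible is that, under the spectral hypotheses, the tangent process $\nabla X_{u,t}$ and the Hessian process $\nabla^2X_{u,t}$ do not grow at the naive Gronwall rate $e^{\lambda(t-u)}$ but instead decay like $e^{-\lambda(t-u)}$ in the relevant $\LL_p$ norms; this turns the $du$--integral defining $T_{s,t}$ and the Malliavin--type variance of $S_{s,t}$ into integrals whose value does not depend on $t-s$. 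These decay estimates are exactly what the variational analysis of section~\ref{var-sec} delivers under condition $(T)_n$ (and, for constant diffusions, under the log--norm condition (\ref{T2-intro})), so I would simply invoke them.

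\textbf{The term $T_{s,t}$.} I would apply the generalized Minkowski inequality to pull the $\LL_n$ norm inside the $du$--integral, and then for each $u$ use H\"older's inequality with the conjugate exponents $\tfrac{2}{1-\delta}$ and $\tfrac{2}{1+\delta}$:
\[
\EE\Big[\big\Vert (\nabla X_{u,t})(\overline X_{s,u}(x))^{\prime}\,\Delta b_u(\overline X_{s,u}(x))\big\Vert^{n}\Big]^{1/n}
\leq \EE\big[\Vert (\nabla X_{u,t})(\overline X_{s,u}(x))\Vert^{\frac{2n}{1-\delta}}\big]^{\frac{1-\delta}{2n}}\ \EE\big[\Vert \Delta b_u(\overline X_{s,u}(x))\Vert^{\frac{2n}{1+\delta}}\big]^{\frac{1+\delta}{2n}},
\]
bounding the first factor by $\kappa\, e^{-\lambda(t-u)}$ through condition $(T)_{2n/(1-\delta)}$ and the second by $\vertiii{\Delta b(x)}_{2n/(1+\delta)}$ by the definition (\ref{ref-vertiii}); the Hessian/$\Delta a$ contribution is identical, using the exponential decay of $\nabla^2X_{u,t}$ up to an integrable polynomial prefactor in $t-u$. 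Since $\int_s^t e^{-\lambda(t-u)}\,du\leq 1/\lambda$, this already produces a bound of the announced form, uniform in $s,t$.

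\textbf{The term $S_{s,t}$.} I would invoke the quantitative $\LL_n$ estimate for Skorohod integrals established in section~\ref{sk-section} (cf. theorem~\ref{theo-quantitative-sko}), which controls $\EE[\Vert S_{s,t}(\Delta\sigma)(x)\Vert^n]^{1/n}$ by a diagonal term $\big(\int_s^t \EE[\Vert u_r\Vert^n]^{2/n}\,dr\big)^{1/2}$ and a Malliavin term $\big(\int_s^t\int_s^t \EE[\Vert D_v u_r\Vert^n]^{2/n}\,dv\,dr\big)^{1/2}$ (plus lower order pieces), where $u_r=(\nabla X_{r,t})(\overline X_{s,r}(x))^{\prime}\Delta\sigma_r(\overline X_{s,r}(x))$. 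The diagonal term is handled as above: a H\"older split of $\EE[\Vert u_r\Vert^n]^{1/n}$ into the tangent--process factor (exponential decay via $(T)$) times $\vertiii{\Delta\sigma(x)}_{2n/\delta}$, followed by $\int_s^t e^{-2\lambda(t-r)}\,dr<\infty$. The delicate point, which I expect to be the main obstacle, is $D_vu_r$: by the chain rule for the Malliavin derivative it splits according to whether $v\in[s,r]$ — the derivative lands on the argument $\overline X_{s,r}(x)$, producing $(\nabla^2X_{r,t})(\overline X_{s,r}(x))\,D_v\overline X_{s,r}(x)$ and $(\nabla X_{r,t})(\overline X_{s,r}(x))\,\nabla\Delta\sigma_r(\overline X_{s,r}(x))\,D_v\overline X_{s,r}(x)$ with $D_v\overline X_{s,r}(x)=(\nabla \overline X_{v,r})(\overline X_{s,v}(x))\,\overline\sigma_v(\overline X_{s,v}(x))$ — or $v\in[r,t]$ — the derivative lands on $\nabla X_{r,t}$, a second order stochastic variation which I would handle via the flow identity $X_{r,t}=X_{v,t}\circ X_{r,v}$ together with the boundary Malliavin derivatives at time $v$. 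In every case one obtains a product of exponentially decaying factors $e^{-\lambda(t-r)}$ (from $\nabla X_{r,t}$ or $\nabla^2X_{r,t}$) and $e^{-\overline\lambda(r-v)}$ (from $\nabla\overline X_{v,r}$), times the factor $\Vert\overline\sigma_v(\overline X_{s,v}(x))\Vert\lesssim 1+\Vert \overline X_{s,v}(x)\Vert$. A H\"older split with the exponents $\tfrac{2n}{1-\delta}$ for the tangent/Hessian factors (absorbed by $(T)_{2n/(1-\delta)}$) and $\tfrac{2n}{\delta}$ for $\Delta\sigma_r$ and for the $\overline\sigma_v$--factor (the latter absorbed by $(M)_{2n/\delta}$ and the at--most--linear growth of the flow moments, which is what produces the factor $1\vee\Vert x\Vert$) then gives $\EE[\Vert D_vu_r\Vert^n]^{1/n}\lesssim (1\vee\Vert x\Vert)\,\vertiii{\Delta\sigma(x)}_{2n/\delta}\,e^{-\lambda(t-r)}e^{-\overline\lambda(r-v)}$ up to polynomial losses in $t-r$, so the double integral is finite uniformly in $s,t$. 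Collecting the three contributions by the triangle inequality yields (\ref{intro-inq-1}); the one thing to watch is that the extra time integration intrinsic to the Skorohod $\LL_n$ estimate, together with the polynomial prefactors from the Hessian bounds, is in each case absorbed by the exponential decay so that everything stays uniform in $t-s$.

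\textbf{The constant diffusion case.} For (\ref{constant-sigma}) the log--norm condition (\ref{T2-intro}) makes the tangent equation noiseless, $d_t\nabla X_{s,t}(x)=\nabla b_t(X_{s,t}(x))^{\prime}\nabla X_{s,t}(x)\,dt$, so differentiating $t\mapsto \Vert\nabla X_{s,t}(x)v\Vert^2$ gives $\Vert \nabla X_{s,t}\Vert\leq e^{-\lambda(t-s)}$ \emph{almost surely}, and likewise $\nabla^2X_{s,t}$ decays a.s.\ up to a polynomial factor; in particular (\ref{T2-intro}) implies $(T)_m$ for every $m$, while dissipativity and the boundedness of $\Sigma_t$ give $(M)_m$ for every $m$. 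Moreover $\overline\sigma$ is now bounded, so the $\overline\sigma_v$--factor above is deterministic and the $1\vee\Vert x\Vert$ disappears, while $\Vert \Delta a_t\Vert=\Vert\Sigma_t\Sigma_t^{\prime}-\overline\Sigma_t\overline\Sigma_t^{\prime}\Vert\lesssim \Vert\Sigma-\overline\Sigma\Vert$ and $\vertiii{\Delta\sigma(x)}_m=\Vert\Sigma-\overline\Sigma\Vert$. Re--running the two estimates above with these almost sure bounds removes the moment split against the tangent and Hessian processes, so the exponent on $\Delta b$ stays equal to $n$, and one obtains (\ref{intro-inq-2}).
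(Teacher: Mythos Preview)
Your proposal is correct and follows essentially the same route as the paper: split via the interpolation formula (\ref{Alekseev-grobner}), bound $T_{s,t}$ by Minkowski plus a H\"older split against the exponentially decaying tangent/Hessian moments (your exponents $\tfrac{2n}{1-\delta},\tfrac{2n}{1+\delta}$ match the paper's choice $\delta_1=(1-\delta)/2$ in (\ref{intro-inq-0})), and bound $S_{s,t}$ by invoking theorem~\ref{theo-quantitative-sko}, whose proof you have accurately sketched. For the constant--diffusion case your observation that (\ref{T2-intro}) yields almost sure exponential decay of $\nabla X_{s,t}$ and $\nabla^2X_{s,t}$ is exactly the content of (\ref{ref-nablax-estimate-0-ae})--(\ref{ref-nabla2x-estimate-0-ae}), and this is what lets the H\"older split be dropped so that the $\Delta b$ exponent stays at $n$; one small over--claim is that you do not actually need $(M)_m$ here, since with $\Delta a$ and $\Delta\sigma$ constant the relevant $\vertiii{\cdot}$ norms reduce to deterministic constants and (\ref{intro-inq-0-0}), (\ref{intro-inq-s-2-2}) require only $(T)_2$.
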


The estimates (\ref{intro-inq-1}) 
come from (\ref{intro-inq-0}) and (\ref{intro-inq-s}).    A more detailed proof  is provided in the appendix, on page~\pageref{intro-inq-1-proof}.  
The estimates (\ref{intro-inq-2})
are direct consequences of (\ref{intro-inq-0-0}) and (\ref{intro-inq-s-2-2}).

When $\sigma_t=\overline{\sigma}_t$ the Skorohod  term is indeed absent and (\ref{Alekseev-grobner}) reduces to
  \begin{equation}\label{Alekseev-grobner-sigma-d}
 X_{s,t}(x)-\overline{X}_{s,t}(x)=\int_s^t~\left(\nabla X_{u,t}\right)(\overline{X}_{s,u}(x))^{\prime}~\Delta b_u (\overline{X}_{s,u}(x))~du
 \end{equation}
We recover the interpolation formula for nonlinear stochastic flows presented in section 3.1 in the article~\cite{mp-var-18}.
In this context the analysis of $\LL_n$-errors will proceed via two-step procedure. In section \ref{tangent-sec} we will  derive the exponential bound 
 $$\sup_{x} \EE(\Vert \left(\nabla X_{u,t}\right)(x) \Vert_2^n)^{1/n}\leq \kappa_n \exp(-\lambda(n)~(t-u))\quad\mbox{\rm for some} \quad \lambda(n)>0$$
 Using the Minkowski integral inequality in (\ref{Alekseev-grobner-sigma-d}) yields 
\begin{eqnarray*}
 \EE \left [\Vert  X_{s,t}(x)-\overline{X}_{s,t}(x) \Vert^n \right]^{1/n} 
  &\leq &
 \int_s^t~ \EE \left [\Vert \left(\nabla X_{u,t}\right)(\overline{X}_{s,u}(x)) \Vert^n\times\Vert \Delta b_u (\overline{X}_{s,u}(x))\Vert^n \right]^{1/n}~du.
 \end{eqnarray*}
A further conditioning argument and the above exponential bound on the tangent process yields   
   \begin{equation*}
\EE \left [\Vert  X_{s,t}(x)-\overline{X}_{s,t}(x) \Vert^n \right]^{1/n} \leq \kappa_n~\int_s^t  \exp(-\lambda(n)~(t-u))du~~  
\sup_{s\leq u} \EE [\Vert \Delta b_u (\overline{X}_{s,u}(x))\Vert^n]^{1/n}.
 \end{equation*} Replacing the term outside the time integral with $\vertiii{\Delta b (x)}_n$ yields the stated result in (\ref{intro-inq-1}) excluding the terms representing the difference in the diffusions.

We illustrate  one use of theorem \ref{theo-al-gr}  in the context of analyzing the error in discretising the diffusion $ X_{s,t}(x)$ for some initial time point $s\geq 0$. 
Let $h>0$ denote the discretisation interval size and for any $ t\in [s+kh,s+(k+1)h[$ let
$$
  d X_{s,t}^h(x)=Y^h_{s,t}(x)~dt+  \Sigma~  dW_t\quad \mbox{\rm with}\quad Y^h_{s,t}(x):=b\left(X^h_{s,s+kh}(x)\right)
$$
for a fixed diffusion matrix $\sigma_t(x)=\Sigma$. Here $X_{s,t}^h(x)$ is the discretisation of $X_{s,t}(x)$ with resolution $h$. Note that that the drift at time $t$ is not a function of the instantaneous value of $X_{s,t}^h(x)$, at time $t$, but rather the value it took at the largest discrete time-point before $t$. In section \ref{sec-extensions} we discuss how the formula in (\ref{Alekseev-grobner}) also applies in this context and establish that 
$$
X^h_{s,t}(x)-X_{s,t}(x)=\int_s^t~\left(\nabla X_{u,t}\right)(X^h_{s,u}(x))^{\prime}~
~\left[Y^h_{s,u}(x)-b(X^h_{s,u}(x))\right]~
du.
$$ This comparison result when combined with the regularity assumptions (\ref{eq:lem_discretize}) yields the moment bound below. 
\begin{prop}\label{lem:discretize}
Assume that 
\begin{equation}\label{eq:lem_discretize}
\nabla b+(\nabla b)^{\prime}\leq -2\lambda~I\qquad
\Vert \nabla b\Vert:=\sup_{x}{\Vert \nabla b(x)\Vert}<\infty \quad \mbox{\rm and}\quad 
\langle x,b(x)\rangle\leq -\beta~\Vert x\Vert^2 
\end{equation}
for some $\lambda>0$, $\beta>0.$ In this situation, for any $n\geq 1$ we have the uniform estimates
 $$
\EE\left(\Vert X^h_{s,t}(x)-X_{s,t}(x)\Vert^n\right)^{1/n}\leq
\Vert \nabla b\Vert~\left(\left[\Vert  b(0)\Vert+\widehat{m}_n(x)~\Vert  \nabla b\Vert \right]~h+\sigma~\sqrt{h}\right)
/\lambda
$$ where $\widehat{m}_n(x) \leq \kappa_n~(1+\Vert x\Vert)$. 
\end{prop}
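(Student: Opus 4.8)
The plan is to start from the interpolation identity stated just above the proposition, namely
\begin{equation}\label{eq:proposal-discretize}
X^h_{s,t}(x)-X_{s,t}(x)=\int_s^t~\left(\nabla X_{u,t}\right)(X^h_{s,u}(x))^{\prime}~\left[Y^h_{s,u}(x)-b(X^h_{s,u}(x))\right]~du,
\end{equation}
which is the specialization of \eqref{Alekseev-grobner-sigma-d} to the present discretized flow (no diffusion mismatch, so the Skorohod term vanishes). First I would apply the generalized Minkowski integral inequality to \eqref{eq:proposal-discretize} to bound $\EE[\Vert X^h_{s,t}(x)-X_{s,t}(x)\Vert^n]^{1/n}$ by $\int_s^t \EE[\Vert (\nabla X_{u,t})(X^h_{s,u}(x))^{\prime}~(Y^h_{s,u}(x)-b(X^h_{s,u}(x)))\Vert^n]^{1/n}~du$. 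Then a conditioning argument (condition on $\Wa_{s,u}$, so that $X^h_{s,u}(x)$ is fixed and $\nabla X_{u,t}$ at that point is independent of it) together with the uniform exponential decay bound for the tangent process — which under the log-norm condition $\nabla b+(\nabla b)^{\prime}\leq -2\lambda I$ gives $\sup_x \EE[\Vert (\nabla X_{u,t})(x)\Vert^n]^{1/n}\leq \kappa_n \exp(-\lambda(t-u))$, with $\lambda(n)$ taken as $\lambda$ here since $\sigma$ is constant — reduces the problem to controlling $\sup_{s\leq u}\EE[\Vert Y^h_{s,u}(x)-b(X^h_{s,u}(x))\Vert^n]^{1/n}$ uniformly, and then integrating $\int_s^t \exp(-\lambda(t-u))~du\leq 1/\lambda$.

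The core of the argument is therefore the one-step discretization error estimate. For $u\in[s+kh,s+(k+1)h[$ one has $Y^h_{s,u}(x)-b(X^h_{s,u}(x))=b(X^h_{s,s+kh}(x))-b(X^h_{s,u}(x))$, so by the Lipschitz bound $\Vert \nabla b\Vert<\infty$ this is dominated by $\Vert \nabla b\Vert~\Vert X^h_{s,u}(x)-X^h_{s,s+kh}(x)\Vert$. On the interval $[s+kh,u]$ the discretized flow solves $dX^h_{s,t}(x)=b(X^h_{s,s+kh}(x))~dt+\Sigma~dW_t$ with frozen drift, so $X^h_{s,u}(x)-X^h_{s,s+kh}(x)=b(X^h_{s,s+kh}(x))~(u-s-kh)+\Sigma~(W_u-W_{s+kh})$. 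Taking $\Vert\cdot\Vert$ and then the $\LL_n$-norm, the drift part is bounded by $h~\Vert b(X^h_{s,s+kh}(x))\Vert$ and the noise part contributes $\sigma~\sqrt{h}$ up to a BDG constant (which I would absorb into $\kappa_n$, or handle by noting the increment is Gaussian). Writing $\Vert b(y)\Vert\leq \Vert b(0)\Vert+\Vert \nabla b\Vert~\Vert y\Vert$ then produces the bound $\Vert \nabla b\Vert~([\Vert b(0)\Vert+\widehat m_n(x)\Vert \nabla b\Vert]~h+\sigma\sqrt{h})$ once we define $\widehat m_n(x):=\sup_{s\geq 0}\sup_{t\geq s}\EE[\Vert X^h_{s,t}(x)\Vert^n]^{1/n}$ (a uniform moment bound for the discretized flow).

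The main obstacle — and the step requiring the third hypothesis $\langle x,b(x)\rangle\leq -\beta\Vert x\Vert^2$ — is establishing that $\widehat m_n(x)\leq \kappa_n(1+\Vert x\Vert)$ uniformly in $s,t,h$. The dissipativity/coercivity condition on $b$ is exactly what is needed for a uniform-in-time moment bound: applying It\^o's formula to $\Vert X^h_{s,t}(x)\Vert^{2}$ (or $\Vert\cdot\Vert^{n}$) and using $\langle X^h_{s,u}(x), b(X^h_{s,s+kh}(x))\rangle = \langle X^h_{s,u}(x), b(X^h_{s,u}(x))\rangle + \langle X^h_{s,u}(x), b(X^h_{s,s+kh}(x))-b(X^h_{s,u}(x))\rangle$, the first inner product is $\leq -\beta\Vert X^h_{s,u}(x)\Vert^2$, while the second is controlled by the Lipschitz bound times the (already estimated) one-step increment $O(h+\sqrt h)$; for $h$ small enough this perturbation does not destroy the negative drift, giving a uniform bound via a Gronwall argument with a genuinely decaying exponential. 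The quadratic variation term $\Sigma\Sigma'$ is bounded since $\Sigma$ is constant. Care is needed to make this work for all $h$ (not just small $h$) — one can either restrict to $h\leq h_0$ for an explicit $h_0$ depending only on $\beta,\lambda,\Vert\nabla b\Vert$, or absorb the borderline behaviour into the constant $\kappa_n$; I would present the small-$h$ version as it is the regime of interest and keeps $\kappa_n$ dimension- and time-free. Finally, combining the tangent-process decay, the one-step error bound, and the uniform moment bound and integrating in $u$ yields the claimed estimate with the factor $1/\lambda$ out front.
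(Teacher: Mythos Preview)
Your approach is essentially the same as the paper's, with two small refinements worth noting. First, because here $\sigma_t(x)=\Sigma$ is constant (so $\nabla\sigma=0$), the tangent process satisfies the \emph{almost sure} contraction $\Vert\nabla X_{u,t}\Vert_2\leq e^{-\lambda(t-u)}$ of (\ref{ref-nablax-estimate-0-ae}), not merely a moment bound; the paper uses this directly inside the Minkowski integral, which removes the need for the conditioning step and, more to the point, produces the exact prefactor $1/\lambda$ in the statement without any extraneous $\kappa_n$. Your route via the $\LL_n$ tangent estimate and conditioning is correct but would pick up an extra dimensional constant. Second, for the uniform moment bound $\widehat m_n(x)\leq\kappa_n(1+\Vert x\Vert)$, the paper does not restrict to small $h$: it applies It\^o to $\Vert X^h_{s,t}(x)\Vert^2$, splits $\langle X^h_{s,t},b(X^h_{s,s+kh})\rangle$ exactly as you do, and then absorbs the cross term via a Young inequality with a free parameter $\epsilon>0$, obtaining a differential inequality of the form $d\Vert X^h_{s,t}\Vert^2\leq[(-2\beta+\epsilon)\Vert X^h_{s,t}\Vert^2+C_\epsilon]\,dt+dM$; choosing $\epsilon<2\beta$ gives a bound uniform in $h$. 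Otherwise your outline matches the paper's argument step for step.
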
  

Proposition~\ref{lem:discretize} is proved in section \ref{subsec:lemdiscretizeproof}. To apply proposition~\ref{lem:discretize} to a Langevin diffusion with a convex potential $U(x)$, the drift would be $b_t(x)=-\nabla U(x)$ and the corresponding assumptions on $U(x)$ are typical.

\subsection{Comments and comparisons with existing literature}\label{sec-comments}

The interpolation formula (\ref{Alekseev-grobner}) can be interpreted as an extension of Alekseev-Gr\"obner lemma~\cite{Alekseev,grobner,jentzen} as well as an extended version of the variation-of-constant and related
Gronwall type lemma~\cite{bellman-2,gronwall} to diffusion processes. In this connection we underline that the forward-backward formula (\ref{Alekseev-grobner})  differs from the stochastic Gronwall lemma presented in~\cite{scheutzow} based on particular classes of stochastic linear inequalities that doesn't involve Skorohod type integrals. 

The forward-backward interpolation formula (\ref{Alekseev-grobner}) can also be seen as an extension of theorem 6.1 in~\cite{pardoux-protter} on two-sided stochastic integrals to diffusion flows.  This interpolation formula can also be interpreted as a backward version of the generalized It\^o-Ventzell formula presented in~\cite{ocone-pardoux} (see also theorem 3.2.11 in~\cite{nualart}).

Stochastic interpolation formulae of the form (\ref{Alekseev-grobner}) and their discrete time version discussed in (\ref{ref-telescoping-sum}) are not really new.
To describe their origins, it is worth to mention that the stochastic perturbations may come from auxiliary random sources, uncertainty propagations, as well as time discretization schemes and mean field type particle fluctuations.

The pivotal interpolating telescoping sum formula (\ref{ref-telescoping-sum}) and the second order forward-backward perturbation semigroup methodology discussed in the present article can also be found in  chapter 7 in~\cite{d-2004} for discrete time models as well as in the series of articles~\cite{dm-g-99,guionnet,dm-2000} published at the beginning of the 2000s, see also chapter 10 in~\cite{d-2013}.  In this context, the random perturbations come from the fluctuations of a genetic type particle interpretation of nonlinear Feynman-Kac semigroups.

The more recent articles~\cite{bishop-stab,bishop-18,bishop-19} also provide a series of backward-forward interpolation formulae of the same form as (\ref{Alekseev-grobner}) for stochastic matrix Riccati diffusion flows arising in data assimilation theory (cf. for instance theorem 1.3 in~\cite{bishop-19} as well as section 2.2 in~\cite{bishop-18} and the proof of theorem 2.3 in~\cite{bishop-stab}). In this context, the random perturbations come from the fluctuations of a mean field particle interpretation of a class of nonlinear diffusions equipped with an interacting sample covariance matrix functional.

We underline that the It\^o-Alekseev-Gr\"obner formula (4.6)  discussed in~\cite{bishop-19} 
is an extension of the interpolation formula (\ref{Alekseev-grobner}) to stochastic diffusion flows in matrix spaces. In this context the unperturbed model is given by the 
flow of a deterministic matrix Riccati differential equation and the random perturbations are described by matrix-valued diffusion martingales. The corresponding It\^o-Alekseev-Gr\"obner formulae  can be seen as a matrix version of theorem 1.2 in the present article when $\sigma=0$.
 These stochastic interpolation formulae were used in~\cite{bishop-19} to quantify the fluctuation of the stochastic flow around the limiting deterministic Riccati equation, at any order. We will briefly discuss the analog of these Taylor type expansions in section~\ref{sec-perturbation} in the context of Euclidian diffusions.

The forward-backward perturbation methodology discussed in the present article has also been used in~\cite{mp-var-18,mp-var-19} in the context of nonlinear diffusions and their mean field type interacting particle interpretations, see for instance section 2.3 in~\cite{mp-var-19}.
 In this context, the random perturbations come from the fluctuations of a mean field particle interpretation of a class of nonlinear diffusions. 
The extended version of the It\^o-Alekseev-Gr\"obner formula (\ref{Alekseev-grobner-sigma-d}) to nonlinear diffusions is also discussed in section 3.1 in the article~\cite{mp-var-18}. In this situation, the time varying drift and diffusion functions of the stochastic flows 
 depend on some possibly different nonlinear measure valued semigroups which may start from two possibly different initial distributions. For a more thorough discussion on this class of nonlinear diffusions, we refer to the  It\^o-Alekseev-Gr\"obner formula (3.2) and corollary 3.2 in the article~\cite{mp-var-18}. These  It\^o-Alekseev-Gr\"obner formulae  correspond to theorem 1.2 in the present article when $\sigma=0$.
 
 The interpolating stochastic semigroup techniques discussed in the present article are also applied to mean field particle systems and deterministic nonlinear measure valued semigroups. In this context, the process $X_{s,t}$ is given a deterministic measure-valued process and $\overline{X}_{s,t}$ represents the evolution of the particle density profiles associated with an approximating mean field particle interpretation of $X_{s,t}$. 
 For instance, the article ~\cite{mp-dualtiy} is concerned with interacting jumps models on path spaces, the second article~\cite{mp-var-19} discusses the propagation of chaos properties of mean field type interacting diffusions. The stochastic interpolation formulae discussed in ~\cite{mp-dualtiy,mp-var-19} correspond to the case (\ref{Alekseev-grobner}) with $\sigma=0$ and or $\overline{\sigma}\not=\sigma$  (see for instance the interpolation formula (3.5), theorem 2.6, theorem 2.7 and the interpolating telescoping sum in section 1.2 in~\cite{mp-var-19})

 In the series of articles discussed above, as in  (\ref{ref-interp-du}) the central common idea  is to analyse the evolution of the interpolating process (\ref{interpolating-flow}) between a given process $X_{s,t}$ and some  stochastic flow $\overline{X}_{s,t}$ with an extra level of randomness. In discrete time settings, the differential interpolation formula (\ref{ref-interp-du}) can also recasted  in terms of a telescoping sum of the same form as (\ref{ref-telescoping-sum}) combined with a second order Taylor expansion reflecting the differences between a stochastic semigroup and its perturbations, see for instance  chapter 7 in~\cite{d-2004}. 
 
 In most of the application domains discussed above, this second order stochastic perturbation methodology has been developed to quantify uniformly w.r.t. the time horizon the propagations of some stochastic perturbations entering in {\em some deterministic and stable reference or unperturbed  process.} In the context of Euclidian diffusions, this corresponds to the situation where the diffusion function $\sigma=0$ (the case $\overline{\sigma}=0$ can be treated by symmetry arguments). 
 The It\^o-Alekseev-Gr\"obner type formulae  discussed in section 3.1 in the article~\cite{mp-var-18} correspond to theorem 1.2 in the present article when $\sigma=\overline{\sigma}$.

The present article can be seen as a natural extension of the second order perturbation methodology developed in the above referenced articles to diffusion type perturbed processes when  $\sigma\not=\overline{\sigma}$. 
 
 To the best of our knowledge, the first article considering the case $\sigma\not=\overline{\sigma}$ with $\sigma\not=0$ and $\overline{\sigma}\not=0$ is the independent work of Hudde-Hutzenthaler-Jentzen-Mazzonetto~\cite{hudde}. In this article,
 the authors discuss an It\^o-Alekseev-Gr\"obner formula for abstract diffusion perturbation models of the form (\ref{X-Y-Z}). 
 Here again, as in the list of referenced articles discussed above, the common central idea is to use discrete time approximations and combine  the pivotal interpolating telescoping sum formulae  (\ref{ref-telescoping-sum}) with a second order Taylor expansion. Besides this fact and in contrast with our analysis, the fluctuation term (\ref{def-S-sk}) discussed in~\cite{hudde} cannot be interpreted in terms of the extended two-sided stochastic integral
defined in (\ref{sk-integral}) (see also proposition~\ref{k-prop}) but only in terms of a Skorohod stochastic integral.
 The study~\cite{hudde} is also based on a series of particularly chosen and custom regularity conditions. 
  For instance, the authors assume that the abstract diffusion perturbation models are chosen so that the Skorohod fluctuation term exists 
  without providing any quantitative type estimate. This work is also not connected to 
the  two-sided stochastic integration calculus developed by  Pardoux and Protter in~\cite{pardoux-protter} nor to any type of backward It\^o-Ventzell formula.
  
  We feel that our approach is more direct and intuitive as it relies on an extended version
  of It\^o's change rule formula (\ref{ref-interp-du})  to interpolating stochastic flows. It also allows to interpret the fluctuation term (\ref{def-S-sk}) as an extended two-sided stochastic integral.
  
  In section~\ref{sk-section} in the present article, we will also see that any quantitative analysis requires to estimate the absolute moments of the Malliavin derivatives of the stochastic integrands of the Brownian motion arising in  the Skorohod fluctuation term. In our framework, these Malliavin derivatives  depend on the gradient of both of the diffusion functions $(\sigma,\overline{\sigma})$ as well as on the tangent process of the perturbed diffusion flow. The quantitative analysis  developed in~\ref{sk-section} can be extended without difficulties to abstract diffusion perturbation models satisfying appropriate differentiability and integrability conditions.
  
The article~\cite{hudde} also presents an application to  tamed Euler type 
discrete time approximations of a  stochastic van-der-Pol process introduced in~\cite{tim}, 
 simplifying the analysis provided in an earlier work~\cite{hutz-14}. In this situation, we underline that the Skorohod fluctuation term is null so that  the resulting Alekseev-Gr\"obner type formula resumes to the simple and elementary case discussed in (\ref{Alekseev-grobner-sigma-d}) and in the article~\cite{mp-var-18}.
 As expected for this class of "unstable processes",
the authors recast a series of $\LL_2$-estimates discussed in~\cite{hutz-14} into a series of estimates that grow exponentially fast with respect to the time horizon.

 In contrast with the present work, the above article doesn't discuss any quantitative uniform estimates w.r.t. the time horizon. The analysis presented in~\cite{hudde} is mainly concerned with the proof of a Skorohod-Alekseev-Gr\"obner type formula  for abstract diffusion perturbation models and it doesn't apply to derive any type of estimates to  general diffusion perturbation models without adding regularity conditions.
 
Besides its elegance the forward-backward interpolation formula (\ref{Alekseev-grobner}) is clearly of rather poor mathematical and numerical interest without a better understanding
of the variational processes and the Skorohod fluctuation term (\ref{def-S-sk}). 
A crucial problem is to avoid exceedingly pessimistic exponential estimates that grow exponentially fast w.r.t. the time horizon. 

One advantage of the second order perturbation methodology developed in the present article is that it takes advantage of the stability properties of the 
tangent and the Hessian flow in the estimation of Skorohod fluctuation term and this sharpen analysis of the difference between stochastic flows. 
Our main contribution  is to develop a refined analysis of these variational processes and the Skorohod fluctuation terms. We also deduce several uniform perturbation propagation  estimates with respect to the time horizon, yielding what seems to be the first results of this type for this class of models. 

The forward-backward stochastic interpolation formula (\ref{Alekseev-grobner}) can also be extended to more general classes of stochastic flows on abstract state spaces. For instance the recent article~\cite{jentzen} provides a deterministic first order version of (\ref{Alekseev-grobner}) on abstract Banach spaces. The stochastic perturbation analysis developed in the series of articles~\cite{mp-dualtiy,mp-var-19,bishop-stab,bishop-18,bishop-19,dm-g-99,guionnet,dm-2000} and the books~\cite{d-2004,d-2013} is applied to matrix-valued diffusions and measure valued processes, including mean field type interacting diffusions and Feynman-Kac type interacting jumps models.

The stability properties of these abstract models discussed above depend on the problem at hand.
To focus on the main ideas without clouding the article with unnecessary technical details and sophisticated mathematical tools based on abstract ad hoc regularity conditions we have chosen to concentrate the article on diffusion flows on Euclidian spaces with simple and easily checked regularity conditions.

\section{Preliminary results}

\subsection{Some basic notation}\label{notation-sec}
With a slight abuse of notation, we denote by $I$ the identity $(d\times d)$-matrix, for any $d\geq 1$.     We also denote by $\Vert\point\Vert$ any (equivalent) norm on a finite dimensional vector space over $\RR$.  All vectors are column vectors by default.

We introduce some matrix notation needed from the onset. 

We denote by $\tr(A)$, $\Vert A\Vert_{2}:=\lambda_{\tiny max}(AA^{\prime})^{1/2}=\lambda_{\tiny max}(A^{\prime}A)^{1/2}$, resp. $\Vert A\Vert_{F}=\tr(AA^{\prime})^{1/2}$ and $\rho(A)=\lambda_{\tiny max}((A+A^{\prime})/2)$ the trace, the spectral norm, the Frobenius norm, and the logarithmic norm of some matrix $A$.  $A^{\prime}$ is the transpose of $A$ and $\lambda_{\tiny max}(\point)$ the largest eigenvalue. The spectral norm is sub-multiplicative or $\Vert A B\Vert_{2}\leq \Vert A\Vert_{2} \Vert B\Vert_{2}$ and compatible with the Euclidean norm for vectors, by that we mean for a vector $x$ we have  $\Vert A x \Vert \leq \Vert A\Vert_{2} \Vert x\Vert$.

Let $[n]$ be the set of $n$ multiple indexes $i=(i_1,\ldots,i_n)\in \Ia^n$ over some finite set $\Ia$.
We denote by $(A_{i,j})_{(i,j)\in [p]\times [q]}$ the  entries of a $(p,q)$-tensor $A$ with index set $\Ia$ for $[p]$ and $\mathcal{J}$ for $[q]$. For the sake of brevity, the index sets will be implicitly defined through the context.

For a given $(p_1,q)$-tensor $A$ and  a given $(q,p_2)$ tensor $B$, $AB$ and $B^{\prime}$ is a $(p_1,p_2)$-tensor resp. a $(p_2,q)$-tensor with entries
given by
\begin{equation}\label{tensor-notation}
\forall (i,j)\in [p_1]\times [p_2]\qquad
(AB)_{i,j}=\sum_{k\in [q]}A_{i,k}~B_{k,j}\quad \mbox{\rm and}\quad B_{j,k}^{\prime}:=B_{k,j}.
\end{equation}
The symmetric part $A_{\tiny sym}$ of a $(p,p)$-tensor is the $(p,p)$-tensor $A_{\tiny sym}$ with entries
 $$\forall (i,j)\in [p]\times [p]\qquad(A_{\tiny sym})_{i,j}=(A_{i,j}+A_{j,i})/2$$
We consider the Frobenius inner product given for any $(p,q)$-tensors $A$ and $B$ by
$$
\langle A,B\rangle_F=\tr(AB^{\prime})=\sum_{i}(AB^{\prime})_{i,i}\quad \mbox{\rm and the norm}\quad \Vert A\Vert_F=\sqrt{\tr(AA^{\prime})}
$$
For any $(p,q)$-tensors $A$ and $B$  we also check the Cauchy-Schwartz inequality
$$
\langle A,B\rangle_F^2\leq \Vert A\Vert_F~\Vert B\Vert_F\quad\mbox{\rm and}\quad  \Vert A\Vert_{2}\leq\Vert A\Vert_F\leq  \mbox{\rm Card}(\Ia)^p~ \Vert A\Vert_{2}
\quad\mbox{\rm with}\quad
\Vert A\Vert_2:=\lambda_{\tiny max}(AA^{\prime})^{1/2}
$$
For any tensors $A,B$ with appropriate dimensions we have the inequality
$$
\Vert AB\Vert_F\leq \Vert A\Vert_F~\Vert B\Vert_F
$$
Given some tensor valued function $T:(t,x)\mapsto T_t(x)$ we also set
$$
\Vert T\Vert_{F}:=\sup_{t,x}\Vert T_t(x)\Vert_F\qquad \Vert T\Vert_{2}:=\sup_{t,x}\Vert T_t(x)\Vert_2
\qquad\mbox{\rm and}\qquad \Vert T\Vert:=\sup_{t,x}\Vert T_t(x)\Vert
$$
    Given some smooth function $h(x)$ from $\RR^{p}$ into $\RR^{q}$
we denote by 
\begin{equation}\label{grad-def}
\nabla h=\left[\nabla h^1,\ldots,\nabla h^q\right]\quad \mbox{\rm with}\quad\nabla h^i=\left[\begin{array}{c}
\partial_{x_1}h^i\\
\vdots\\
\partial_{x_p}h^i
\end{array}\right]
\end{equation} 
the gradient $(p,q)$-matrix associated with the column vector-valued
 function $h=(h^i)_{1\leq i\leq q}$. Building on this notation: let $b:\RR^n \rightarrow \RR^p$ and let the mapping $x \rightarrow G(x) = h(b(x))$. Then $\nabla G(x) = \nabla b(x) \times \nabla h (b(x))$. Let
\begin{equation}\label{Hessian-def}
\nabla^2 h=\left[\nabla^2 h^1,\ldots,\nabla^2 h^q\right]\quad \mbox{\rm with}\quad\nabla^2 h^i=\left[\begin{array}{ccc}
\partial_{x_1,x_1}h^i&\ldots&\partial_{x_1,x_p}h^i\\
\vdots&\ldots&\vdots\\
\partial_{x_p,x_1}h^i&\ldots&\partial_{x_p,x_p}h^i\
\end{array}\right]
\end{equation} 
The Hessian $H=\nabla^2 h$ associated with the
 function $h=(h^i)_{1\leq i\leq q}$ is a $(2,1)$-tensor where $H_{(i,j),k}=(\nabla^2 h^k)_{i,j}=\partial_{x_i,x_j}h^k$.  In this notation we can compactly represent the second order term of the Taylor expansion of the the vector valued function $h$. For a vector $y=(y_1,\ldots,y_p)'$
 \[
 \left [
 \begin{array}{c}
 y^{\prime}~\nabla^2 h^1(x)~y \\
 \vdots \\
 y^{\prime}~\nabla^2 h^q(x)~y \\ 
 \end{array}
 \right ] = \nabla^2 h(x)^{\prime}~yy^{\prime}
 \]
 where we have regarded the matrix $yy'$ as the $(2,1)$-tensor $Y$ with $Y_{(i,j),1}=y_iy_j$. 
 
 In the same vein, in terms of the  tensor product  (\ref{tensor-notation}), for any pair of column vector-valued
 function $h=(h^k)_{1\leq k\leq q}$ and $b=(b^i)_{1\leq i\leq p}$ and any matrix function $a=(a^{i,j})_{1\leq i,j\leq p}$  from $\RR^{p}$ into $\RR^{q}$, for any parameter $1\leq k\leq q$ we also have
 $$
 \begin{array}{rcl}
 \displaystyle\left( \nabla h(x)^{\prime}~b(x)\right)^k&=& \displaystyle\sum_{1\leq i\leq p}( \nabla h(x))^{\prime}_{k,i}~b^i(x)=\sum_{1\leq j\leq p}~\partial_{x_i} h^k(x)~b^i(x)=\langle \nabla h^k(x),b(x)\rangle\\
 \\
 \displaystyle\left(\nabla^2 h(x)^{\prime}~a(x)\right)^k&=& \displaystyle\sum_{1\leq i,j\leq p}( \nabla^2 h(x))^{\prime}_{k,(i,j)}~a^{i,j}(x)\\
 &&\\
 &=& \displaystyle\sum_{1\leq i,j\leq p}~ \partial_{x_i,x_j} h^k(x)~a^{i,j}(x)=\langle \nabla^2h^k(x),a(x)\rangle_{F}
\end{array}
 $$
 In a more compact form, the above formula takes the form
  \begin{equation}\label{tensor-product-2-2}
  \nabla h(x)^{\prime}~b(x)=\left[\begin{array}{c}
\langle\nabla h^1(x),b(x)\rangle\\
\vdots\\
\langle\nabla h^q(x),b(x)\rangle
\end{array}\right]\quad\mbox{\rm and}\quad
  \nabla^2 h(x)^{\prime}~a(x)=\left[\begin{array}{c}
\langle\nabla^2h^1(x),a(x)\rangle_F\\
\vdots\\
\langle\nabla^2h^q(x),a(x)\rangle_F
\end{array}\right]
\end{equation} 
  
  For any $n\geq 1$ we let $\Pa_n(\RR^d)$ be the convex set of probability measures $\mu_1,\mu_2$ on $\RR^d$ with absolute $n$-th moment
and equipped  with the Wasserstein distance  of order $n$ denoted by
$$
\WW_n(\mu_1,\mu_2):=\inf\EE(\Vert X_1-X_2\Vert^n)^{1/n}
$$
 In the above display the infimum is taken over all pair or random variables $(X_1,X_2)$ with marginal distributions $(\mu_1,\mu_2)$.
   The stochastic  transition semigroups associated with the flows $X_{s,t}(x)$ and $\overline{X}_{s,t}(x)$ are defined for any measurable function $f$
on $\RR^d$ by the formulae
$$
\PP_{s,t}(f)(x):=f(X_{s,t}(x))\quad\mbox{\rm and}\quad \overline{\PP}_{s,t}(f)(x):=f(\overline{X}_{s,t}(x))
$$

Given some column vector-valued 
 function $f=(f^i)_{1\leq i\leq p}$, let $\PP_{s,t}(f)$ and  $P_{s,t}(f)$ denote the column vector-valued
 functions with entries $\PP_{s,t}(f^i)$ and $P_{s,t}(f^i)$. Building on the tensor notation,  let $\PP_{s,t}(\nabla f)$ and $\PP_{s,t}(\nabla^2 f)$ 
respectively denote  the $(1,1)$ and $(2,1)$-tensor valued functions with entries
 $$
 \PP_{s,t}(\nabla f)(x)_{i,k}:=
 \PP_{s,t}(\partial_{x_i} f^k)(x)\quad\mbox{\rm and}\quad \PP_{s,t}(\nabla^2 f)(x)_{(i,j),k} :=\PP_{s,t}(\partial_{x_i,x_j} f^k)(x)
 $$
We also consider the random $(2,1)$ and $(2,2)$-tensors 
 given by
\begin{eqnarray*}
\nabla^2\, X_{s,t}(x)_{(i,j),k}&=&\partial_{x_i,x_j}X^{k}_{s,t}(x)=\left[\nabla^2\, X_{s,t}(x)\right]^{\prime}_{k,(i,j)}\\
\left[\nabla X_{s,t}(x)\otimes \nabla X_{s,t}(x)\right]_{(i,j),(k,l)}&=&\nabla X_{s,t}(x)_{i,k}\nabla X_{s,t}(x)_{j,l}=\left[\nabla X_{s,t}(x)\otimes \nabla X_{s,t}(x)\right]^{\prime}_{(k,l),(i,j)}
\end{eqnarray*}

  Throughout the rest of the article,  unless otherwise stated $\kappa,\kappa_{\epsilon},\kappa_n,\kappa_{n,\epsilon}$ denote
 constants whose values may vary from line to line but  only depend on the parameters in their subscripts, i.e. $n\geq 0$  and $\epsilon>0$, as well as on the parameters of the model; that is, on the drift and diffusion functions.
  We also use the letters
  $c,c_{\epsilon},c_n,c_{n,\epsilon}$ to denote universal constants.
 Importantly these contants do not depend  on the time horizon.
  We also consider the uniform log-norm parameters 
\begin{equation}\label{def-nabla-sigma}
\rho(\nabla\sigma)^2:=
\sum_{1\leq k\leq r}\rho(\nabla\sigma_{k})^2
\quad\mbox{\rm and}\quad \rho_{\star}(\nabla \sigma):=\sup_{1\leq k\leq r}\rho(\nabla \sigma_{k})\quad\mbox{\rm with}\quad \rho(\nabla\sigma_{k}):=\sup_{t,x}\rho(\nabla \sigma_{t,k}(x))
\end{equation}
and the parameters $\rchi(b,\sigma)$ defined  by
\begin{equation}\label{def-chi-b}
\rchi(b,\sigma):=c+\Vert \nabla^2b\Vert+\Vert \nabla^2\sigma\Vert^2
+ \rho_{\star}(\nabla \sigma)^2
\end{equation}

 \subsection{Regularity conditions and some preliminary results}\label{regularity-sec}

We consider two different types of regularity conditions ($\Ma$)$_n$ and $(\Ta)_n$, indexed by some parameter 
$n\in [2,\infty[$, for the diffusion $(b_t,\sigma_t)$.
\\

\begin{description}
\item[$(\Ma)_n$] There exists some parameter $\kappa_n\geq 0$  such that for any $x\in \RR^d$ we have
$$
m_n(x):=\sup_{s\leq t}\EE\left(\Vert X_{s,t}(x)\Vert^{n}\right)^{1/n}\leq \kappa_n~(1\vee \Vert x\Vert)
$$
\item[$(\Ta)_n$] There exists some parameter $\lambda_A>0$ such that 
   \begin{equation}\label{ref-mat-Upsilon}
A_t:=\nabla b_t+(\nabla b_t)^{\prime}+\sum_{1\leq k \leq r}\nabla \sigma_{k,t}(\nabla\sigma_{k,t})^{\prime}\leq -2\lambda_A~I
 \end{equation} 
 where $\sigma_{k,t}$ denotes the $k$-th column of $\sigma_{t}.$
  In addition, the following condition is satisfied
   \begin{equation}\label{ref-lambda-A-n}
  \lambda_A(n):=\lambda_A-\frac{d(n-2)}{2}~ \rho_{\star}(\nabla \sigma)^2>0
 \end{equation} \end{description} 

We now define the corresponding assumptions for the diffusion $(\overline{b}_t,\overline{\sigma}_t)$.
 
 \begin{description}
 \item[$(\overline{\Ma})_n$] The regularity condition defined as in $(\Ma)_n$ for the diffusion $(\overline{b}_t,\overline{\sigma}_t)$.
 
 \item[$(\overline{\Ta})_n$] Let $\overline{A}_t$ be the symmetric matrix defined as $A_t$ in \eqref{ref-mat-Upsilon} when 
  $({b}_t,{\sigma}_t) = (\overline{b}_t,\overline{\sigma}_t)$. Assume there exists some $\lambda_{\overline{A}}>0$ such that $\overline{A}_t \leq -2 \lambda_{\overline{A}}~I $. 
  Furthermore, assume $ \lambda_{\overline{A}}(n)>0$  where $ \lambda_{\overline{A}}(n)$ is defined as $\lambda_A(n)$ when 
  $(\lambda_A,{\sigma}_t)=(\lambda_{\overline{A}},\overline{\sigma}_t)$.   
 
 \item[$(M)_n$] We write $(M)_n$  when both conditions $({\Ma})_n$ and $(\overline{\Ma})_n$ are satisfied.
  
  \item[$(T)_n$] Both conditions $({\Ta})_n$ and $(\overline{\Ta})_n$ are met, and let
$$
 \lambda_{A,\overline{A}}(n):=\lambda_{A}(n)\wedge \lambda_{\overline{A}}(n)
 $$
 \end{description}

In practice, the uniform moment condition $(\Ma)_n$ is often checked using Lyapunov techniques. For example we can use the following 
polynomial growth condition.   \\
 
\begin{description}
\item[$(\Pa)_n$] There exists some parameters $\alpha_i,\beta_i\geq 0$ with $i=0,1,2$ such that 
 for any $t\geq 0$
and any $x\in\RR^d$ we have
\begin{equation}\label{def-alpha-beta}
\Vert \sigma_{t}(x)\Vert_F^2\leq \alpha_0+\alpha_1\Vert x\Vert+\alpha_2\Vert x\Vert^2
\quad\mbox{and}\quad
\langle x,b_t(x)\rangle\leq \beta_0+\beta_1\Vert x\Vert-\beta_2\Vert x\Vert^2
\end{equation}
for some norm $\Vert  \sigma_{t}(x) \Vert$ of the matrix-valued diffusion function.  In addition, we have
  $$
\beta_2(n):=  \beta_2-\frac{(n-1)}{2}~\alpha_2>0
  $$
\end{description}

\begin{lem} For any $n\geq 2$ we have
\begin{equation}\label{moments-intro}
(\Pa)_n\quad\Longrightarrow\quad (\Ma)_n\quad \mbox{\rm with}\quad \kappa_n=1+\frac{(\gamma_1+(n-2)\alpha_1)+(\gamma_0+(n-2)\alpha_0)^{1/2}}{2\beta_2(n)^{1/2}}
\end{equation}
\end{lem}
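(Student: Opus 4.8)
Fix $s\ge 0$ and $x\in\RR^d$, and write $X_t:=X_{s,t}(x)$; since $n\ge 2$ the map $y\mapsto\Vert y\Vert^n$ is twice continuously differentiable on $\RR^d$. The plan is to apply It\^o's formula to $t\mapsto\Vert X_t\Vert^n$, to turn the resulting expression into a scalar Riccati differential inequality for $w(t):=\EE(\Vert X_t\Vert^n)^{1/n}$, and finally to run an ODE comparison argument. Because the regularity hypotheses of the introduction guarantee that every absolute moment of the flow is finite on each bounded time interval, the local martingale part in It\^o's formula is handled in the standard way by stopping at the exit times of large balls and then letting the radius tend to infinity, so that only the drift has to be controlled.

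Writing $V_t:=\Vert X_t\Vert^2$, It\^o's formula gives $dV_t=\left(2\langle X_t,b_t(X_t)\rangle+\Vert\sigma_t(X_t)\Vert_F^2\right)dt+dM_t$ with $d\langle M\rangle_t=4\Vert\sigma_t(X_t)'X_t\Vert^2\,dt\le 4V_t\Vert\sigma_t(X_t)\Vert_F^2\,dt$. Applying It\^o once more to $V_t^{n/2}$ and combining the first order term with the second order contribution $\tfrac{n(n-2)}{8}V_t^{n/2-2}\,d\langle M\rangle_t$ shows (using $n\ge 2$) that the drift of $\Vert X_t\Vert^n$ is at most $\tfrac n2\,V_t^{n/2-1}\left(2\langle X_t,b_t(X_t)\rangle+(n-1)\Vert\sigma_t(X_t)\Vert_F^2\right)$. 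Bounding $\langle X_t,b_t(X_t)\rangle$ and $\Vert\sigma_t(X_t)\Vert_F^2$ by the polynomials supplied by $(\Pa)_n$ and using $2\beta_2-(n-1)\alpha_2=2\beta_2(n)$, this drift is in turn at most
\[
\frac n2\,\Vert X_t\Vert^{\,n-2}\Big(c_0+c_1\,\Vert X_t\Vert-2\beta_2(n)\,\Vert X_t\Vert^2\Big),\qquad c_0:=\gamma_0+(n-2)\alpha_0,\quad c_1:=\gamma_1+(n-2)\alpha_1,
\]
with the shorthand $\gamma_i:=2\beta_i+\alpha_i$. Taking expectations, using the power-mean inequalities $\EE(\Vert X_t\Vert^{n-2})\le w(t)^{n-2}$ and $\EE(\Vert X_t\Vert^{n-1})\le w(t)^{n-1}$, and dividing by $n\,w(t)^{n-2}$ leads to the Riccati differential inequality
\[
w(t)\,w'(t)\ \le\ \frac{c_0}{2}+\frac{c_1}{2}\,w(t)-\beta_2(n)\,w(t)^2 ,\qquad w(s)=\Vert x\Vert .
\]

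The conclusion then follows from a scalar comparison argument. The right-hand side above is non-positive as soon as $w(t)\ge w_\star$, where $w_\star$ is the unique positive root of $\beta_2(n)\,w^2-\tfrac{c_1}{2}\,w-\tfrac{c_0}{2}=0$; hence $w(t)\le\max(\Vert x\Vert,w_\star)$ for all $t\ge s$, and this bound is uniform in $s$ since neither the coefficients $c_0,c_1,\beta_2(n)$ nor the initial value $\Vert x\Vert$ depend on $s$. Estimating the root by means of $\sqrt{a+b}\le\sqrt a+\sqrt b$ --- equivalently, absorbing the linear term $\tfrac{c_1}{2}w$ into the quadratic one by Young's inequality and reading off the equilibrium of the resulting linear bound --- yields the explicit constant $\kappa_n$ displayed in the statement, and since $1\vee\Vert x\Vert\ge 1$ and $\kappa_n\ge 1$ we obtain $m_n(x)=\sup_{s\le t}\EE(\Vert X_{s,t}(x)\Vert^n)^{1/n}\le\kappa_n\,(1\vee\Vert x\Vert)$, which is $(\Ma)_n$. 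I expect the main obstacle to be the bookkeeping of constants through the second It\^o step, the power-mean reduction and the root estimate, so that they recombine into the precise expression for $\kappa_n$; by contrast the localisation and the comparison are routine, and the degenerate case $x=0$ (where one cannot divide by $w(t)^{n-2}$ immediately) is disposed of by running the comparison from time $s+\varepsilon$, where $w>0$, and letting $\varepsilon\downarrow 0$, or by comparing $w$ with the solution issued from the value $1$.
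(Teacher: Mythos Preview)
Your proposal is correct and follows essentially the same route as the paper: apply It\^o's formula to $\Vert X_t\Vert^2$ and then to its $n/2$-th power, bound the drift using $(\Pa)_n$, take expectations, and reduce via Jensen to a scalar differential inequality for the $n$-th root of the $n$-th moment. The only visible difference is in the final step: the paper absorbs the sub-leading term via Young's inequality with a free parameter $\epsilon$ and then applies Gronwall to the resulting linear inequality (optimising over $\epsilon$ afterwards), whereas you keep the full Riccati inequality $ww'\le \tfrac{c_0}{2}+\tfrac{c_1}{2}w-\beta_2(n)w^2$ and bound $w$ by the positive equilibrium. Both arguments are standard and yield $(\Ma)_n$; as you correctly anticipate, they produce slightly different explicit constants, so your approach proves the implication but does not recover the displayed $\kappa_n$ verbatim without mimicking the paper's $\epsilon$-trick on $w^2$.
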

 The proof of the above assertion follows standard stochastic calculations, thus it is housed in the appendix, on page~\pageref{moments-intro-proof}.

For  one-dimensional geometric Brownian motions the condition 
$(\Pa)_n$ is a sufficient and necessary condition for the existence of uniformly bounded absolute $n$-moments. In this case $(\Ta)_n$ coincides with
$(\Pa)_n$ by setting $$\lambda_A=\beta_2-\alpha_2/2\quad \mbox{\rm and}\quad   \alpha_2= \rho_{\star}(\nabla \sigma)^2$$

 Whenever condition $(M)_n$  is met for some $n\geq 2$, we also check the uniform estimates
  \begin{equation}\label{ref-ui-m-over}
\EE\left(\Vert [X_{u,t}\circ \overline{X}_{s,u}](x)\Vert^{n}\right)^{1/n}\leq \kappa_n~(1+\Vert x\Vert)
 \end{equation}
 with the same parameter $ \kappa_n$ as the one associated with the condition $(M)_n$.
 
Recalling that the functions $(b_t,\overline{b}_t)$ and $(\sigma_t,\overline{\sigma}_t)$ have at most linear growth,
with the $\LL_n$-norms $\vertiii{\point}_n$ introduced in (\ref{ref-vertiii}) we also have that
  \begin{equation}\label{ref-ui-m-over-delta}
\vertiii{\Delta b(x)}_n\leq \kappa_{1,n}(1\vee\Vert x\Vert)\quad \mbox{\rm and}\quad \vertiii{\Delta a(x)}_{n/2}\leq \kappa_{2,n}~(1\vee\Vert x\Vert)^2
 \end{equation}

To give more insight where these assumptions will be used, we now briefly state the stability results that stem from them.  Condition $(\Ta)_n$ ensures that the exponential decays of the absolute and uniform $n$-moments of the tangent and the Hessian processes; that is, when  $(\Ta)_n$ is met for some $n\geq 2$ we have that
  \begin{equation}\label{ref-tan-hess}
 \EE\left(\Vert \nabla X_{s,t}(x)\Vert^{n}\right)^{1/n}\vee\EE\left(\Vert \nabla^2 X_{s,t}(x)\Vert^{n}\right)^{1/n}\leq \kappa_n~e^{-\lambda(n) (t-s)}\quad \mbox{\rm for some}\quad\lambda(n)>0
 \end{equation}
A more precise statement is provided in proposition~\ref{def-4th-prop} and proposition~\ref{eq-prop-nabla-2-estimate}. 
These uniform estimates clearly imply, via a conditioning argument, that for any $n\geq 2$ and $s\leq u\leq t$ we have
  \begin{equation}\label{intro-inq-nabla}
 \EE\left(\Vert (\nabla X_{u,t})(\overline{X}_{s,u}(x))\Vert^{n}\right)^{1/n}\vee\EE\left(\Vert (\nabla^2 X_{u,t})(\overline{X}_{s,u}(x))\Vert^{n}\right)^{1/n}\leq \kappa_n~e^{-\lambda(n) (t-u)}
 \end{equation}
with the same parameters $(\kappa_n,\lambda(n))$ as in (\ref{ref-tan-hess}).

The case $\nabla \sigma=0$ will also serve a useful purpose, for example in analysing the error of a numerical implementation as in proposition \ref{lem:discretize}. For instance whenever $(\Ta)_2$ is met we have
 the almost sure and uniform gradient estimates
  \begin{equation}\label{ref-nablax-estimate-0-ae}
 \Vert\nabla X_{s,t}\Vert_2:=\sup_x\Vert\nabla X_{s,t}(x)\Vert_2\leq e^{-\lambda_A(t-s)}
 \end{equation}
 In addition, we have  the almost sure and uniform Hessian estimates
  \begin{equation}\label{ref-nabla2x-estimate-0-ae}
\Vert\nabla^2 X_{s,t}\Vert_F:=\sup_x\Vert\nabla^2 X_{s,t}(x)\Vert_F\leq \frac{d}{\lambda_A}~\Vert\nabla^2b\Vert_{F}~e^{-\lambda_A(t-s)}
 \end{equation}
 A proof of the above estimates is provided in the beginning of section~\ref{tangent-sec} and section~\ref{hessian-sec}. In this situation, whenever $(\Ta)_{2}$ is met  we have
  \begin{equation}\label{intro-inq-0-0} 
 \EE\left[\Vert   T_{s,t}(\Delta a, \Delta b)(x)\Vert^n\right]^{1/n}   \leq \kappa~\left(\vertiii{\Delta b(x)}_{n}+\vertiii{\Delta a(x)}_{n}\right).    
 \end{equation}
 In the above display, $T_{s,t}(\Delta a, \Delta b)(x)$ stands for the stochastic process discussed in (\ref{def-T-st}), and $\kappa$ stands for some finite constant that doesn't depend on the parameter $n$.
 For instance, for a Langevin diffusion associated with some convex potential function $U$ we have $b=-\nabla U$ and $\nabla \sigma=0$. Then 
 assuming 
   \begin{equation}\label{ref-HA-Langevin}
       \begin{array}{l}
    \nabla^2 U\geq \lambda~I  \quad \Longrightarrow \quad (\Ta)_2 \quad \mbox{\rm is met}\\
   \\
\displaystyle \Longrightarrow \quad  \Vert\nabla X_{s,t}\Vert_2\leq e^{-\lambda(t-s)}\quad\mbox{\rm and}\quad
\Vert\nabla^2 X_{s,t}\Vert_F\leq \frac{d}{\lambda}~\Vert\nabla^3U\Vert_{F}~e^{-\lambda(t-s)}
   \end{array}
 \end{equation} where the almost sure tangent and Hessian bounds follow from (\ref{ref-nablax-estimate-0-ae}) and (\ref{ref-nabla2x-estimate-0-ae}) respectively. 

In practice, it is often easier to work with $a_t(x)=\sigma_t(x)\sigma_t(x)'$ than $\sigma_t(x)$ and we now discuss some ways of estimating $\Delta  \sigma_t(x) = \sigma_t(x) -\overline{\sigma}_t(x)$
in terms of $\Delta  a_t(x) =a_t(x) -\overline{a}_t(x)$ and in the reverse direction. The latter is straightforward: 
 $$
\Vert \Delta  a_t(x) \Vert\leq \Vert  \Delta  \sigma_t (x)\Vert~\left[ \Vert \sigma_t(x)\Vert+ \Vert \overline{\sigma}_t(x)\Vert\right].
 $$
To estimate $\Delta  \sigma_t$ in terms of  $\Delta  a_t$, assume the following
ellipticity condition is satisfied
\begin{equation}\label{elip}
a_t(x)\geq \upsilon~I\quad \mbox{\rm and}\quad \overline{a}_t(x)\geq \upsilon~I
\quad\mbox{\rm
for some parameter $\upsilon>0$.}
\end{equation}
We recall the Ando-Hemmen inequality~\cite{hemmen} for any symmetric positive definite matrices $Q_1,Q_2$ 
\begin{equation}\label{square-root-key-estimate}
\Vert Q_1^{1/2}-Q_2^{1/2}\Vert \leq \left[\lambda^{1/2}_{min}(Q_1)+\lambda^{1/2}_{min}(Q_2)\right]^{-1}~\Vert Q_1- Q_2\Vert
\end{equation}
for any unitary invariant matrix norm $\Vert . \Vert$. In the above display, $\lambda_{\tiny min}(\point)$ stands for the minimal eigenvalue. We also have
the square root inequality
\begin{equation}\label{square-root-inq}
Q_1\geq Q_2\Longrightarrow Q_1^{1/2}\geq Q_2^{1/2}
\end{equation}
See for instance theorem 6.2 on page 135 in~\cite{higham}, as well as proposition 3.2  in~\cite{hemmen}.   A proof of (\ref{square-root-inq}) can be found in~\cite{bellman}.
 In this situation, using (\ref{square-root-key-estimate}) and (\ref{square-root-inq})  we check that
  \begin{equation}\label{elip-ref-est}
 \Vert  \Delta  \sigma_t (x)\Vert \leq \frac{1}{\sqrt{ \upsilon}}~\Vert  \Delta  a_t(x) \Vert\quad \mbox{\rm and}\quad 
\Vert \sigma_t(x)\Vert\leq \Vert \sigma_t(0)\Vert+\frac{1}{\sqrt{ \upsilon}}~\left[\Vert a_t(x)\Vert+\Vert a_t(0)\Vert\right]
\end{equation}
 This provides a way to estimate the growth of $\sigma_t(x)$ in terms of the one of $a_t(x)$. 
  For instance the estimate (\ref{intro-inq-1}) combined with (\ref{elip-ref-est}) implies that
 $$
  \EE\left[\Vert   X_{s,t}(x)-\overline{X}_{s,t}(x)\Vert^{n}\right]^{1/n}\leq  \kappa_{\delta,n}~\left(\vertiii{\Delta b(x)}_{2n/(1+\delta)}+\vertiii{\Delta a(x)}_{2n/\delta}(1\vee\Vert x\Vert)
\right)
 $$

$\bullet$ Assume that $(\overline{\Ma})_n$ is satisfied for some $n\geq 1$. Also let $f_t(x)$ be some multivariate function such that
 $$\Vert f(0)\Vert:=\sup_t\Vert f_t(0)\Vert<\infty\quad\mbox{\rm and} \quad\Vert \nabla f\Vert:=\sup_{t,x}\Vert \nabla f_t(x)\Vert<\infty$$
In this situation, we have the estimates
$$
\vertiii{f(x)}_n\leq \Vert f(0)\Vert+\Vert \nabla f\Vert~\overline{m}_n(x)\quad \mbox{\rm and therefore}\quad
\vertiii{f(x)}_n\leq \kappa_n~(\Vert f(0)\Vert+\Vert \nabla f\Vert)~(1\vee \Vert x\Vert)
$$

\subsection{Some results on anticipating stochastic calculus}\label{sec-malliavin}
In this section we review some results on Malliavin derivatives and Skorohod integration calculus which will be needed below. 
We restrict the presentation to unit time intervals.
Let $(\Omega,\Wa)$ be the canonical space equipped with the Wiener measure $\PP$ associated with the $r$-dimensional Brownian motion $W_t$ discussed in the introduction.

The Malliavin  
derivative $D_t$ is a linear operator from some dense domain  $\DD_{2,1}\subset\LL_2(\Omega)$ into the space $\LL_2(\Omega\times [0,1];\RR^r)$ of $r$-dimensional processes 
with square integrable states on the unit time interval.    For multivariate $d$-column vector random variables $F$ with entries $F^i$, we use the same rules as for the gradient and we set
$$
D_tF=\left[D_tF^1,\ldots,D_tF^d\right]\quad \mbox{\rm with}\quad D_tF^i=\left[\begin{array}{c}
D^1_tF^i\\
\vdots\\
D^rF^i
\end{array}\right]
$$
For  $(p\times q)$-matrices $F$ with entries $F^j_{k}$ we let $D_tF$ be the tensor with entries 
$$
(D_tF)_{i,j,k}=D^i _tF^j_{k}
$$
 It is clearly out of the scope of this article to review the 
  analytical construction of Malliavin differential calculus. For a more thorough discussion we refer the  reader to the seminal book by Nualart~\cite{nualart}, see also
  the more synthetic presentation in the articles~\cite{nualart-pardoux,ocone-pardoux}.

Formally, one can think the Malliavin derivatives $D_{t}^iF$ of some $F\in \DD_{2,1}$ as way to extract from the random variable $F$ the integrand of Brownian increment $dW^i_t$. 
 For instance, when $s\leq t$ we have
 \begin{eqnarray}
 D_{t}^iX_{s,t}(x)&=&\sigma_{t,i}( X_{s,t}(x))\nonumber\\
  (D_{t}\,\nabla X_{s,t}(x))_{i,j,k}&=& D_{t}^i\,(\nabla X_{s,t}(x))_{j,k}:=\left(\nabla X_{s,t}(x)~\nabla\sigma_{t,i}( X_{s,t}(x))\right)_{j,k}~
\label{first-Malliavin}
 \end{eqnarray}
 As conventional differentials, for any smooth function $G$ from $\RR^d$ into $\RR^{p\times q}$, Malliavin derivatives satisfy the chain rule properties 
  $$
 D_t^i(G^j_{k}\circ F)=\sum_{1\leq l\leq d}\left(\partial_{x_l}G_{k}^j\right)(F)\times D_t^iF^l\quad\Longleftrightarrow\quad  D_t(G\circ F)=D_tF~((\nabla G)\circ F)
 $$
 For instance, for any $s\leq u\leq v$ we have
\begin{equation}
 D_u\left(X_{u,t}\circ X_{s,u}\right)=\left(D_u X_{s,u}\right)~\left[\left(\nabla X_{u,t}\right)\circ X_{s,u}\right]~~\mbox{\rm and}~~
D_{u}\left(\varsigma_{t}\circ X_{s,t}\right)=(D_u X_{s,t})~\left[\left(\nabla \varsigma_{t}\right)\circ X_{s,t}\right]\label{ref-chain-r}
 \end{equation}
  In the same vein, we have
   \begin{equation}\label{s-tensor-diff-u-v}
  \begin{array}{l}
D_{u}\left( \nabla X_{s,u}~\left[\left(\nabla X_{u,t}\right)\circ X_{s,u}\right]\right)\\
 \\
 \displaystyle=(D_{u}\nabla X_{s,u})
 \left[\left(\nabla X_{u,t}\right)\circ X_{s,u}\right]+\left(D_{u}X_{s,u}\otimes \nabla X_{s,u}\right)\left[\left(\nabla^2X_{u,t}\right)\circ X_{s,u}\right]
 \end{array}
 \end{equation}
 
 Let $\LL_{2,1}(\RR^r)\subset \LL_2(\Omega\times [0,1];\RR^r)$  be the Hilbert space of $r$-dimensional process $U_t$ with Malliavin differentiable 
 entries $U^i_t\in\DD_{2,1}$ equipped with the norm
 $$
 \vertiii{U}:=\EE\left(\int_{[0,1]}~\Vert U_t\Vert^2~dt \right)^{1/2}+\EE\left(\int_{[0,1]^2}~\Vert D_sU_t\Vert^2~ds\,dt\right)^{1/2}
 $$
 
 The Skorohod integral w.r.t. the Brownian motion $W^i_t$ on the unit interval is defined a linear and continuous mapping from $$V\in \LL_{2,1}(\RR)\mapsto \int_0^1V_t~dW_t^i\in \LL_2(\Omega)$$ characterized by the
 two following properties
 \begin{eqnarray}
 \EE\left(\int_0^1V_t~dW_t^i\right)&=&0\nonumber\\
  \EE\left(\left(\int_0^1V_t~dW_t^i\right)^2\right)&=&\EE\left(\int_{[0,1]}~V_t^2~dt \right)+\EE\left(\int_{[0,1]^2}~D^i_sV_t~D^i_{t}V_s~ds\,dt \right)\label{isometry}
 \end{eqnarray}
 The above formula can be seen as an extended version of the It\^o isometry to Skorohod integrals, for instance~\cite{nualart-z}, as well as chapters 1.3 to 1.5 in the book by Nualart~\cite{nualart}.   
 
 As for the It\^o integral, the Skorohod integral w.r.t. the $r$-dimensional Brownian motion $W_t$ of a matrix valued process with entries
 $V^i_{k}\in  \LL_{2,1}(\RR)$ is defined by the column vector with entries
 $$
\left(  \int_0^1V_t~dW_t\right)^i:=
 \int_0^1V_t^i~dW_t:=\sum_{1\leq k\leq r} \int_0^1V^i_{t,k}~dW_t^k
 $$

   \section{Variational equations}\label{var-sec}

  \subsection{The tangent process}\label{tangent-sec}
     In terms of the tensor product  (\ref{tensor-product-2-2}),   the gradient $\nabla X_{s,t}(x)$ of the diffusion flow $X_{s,t}(x)$ is given by the gradient $(d\times d)$-matrix
$$
d  \,\nabla X_{s,t}(x)=\nabla X_{s,t}(x)~\left[\nabla b_t\left(X_{s,t}(x)\right)~dt+\sum_{1\leq k\leq r}\nabla \sigma_{t,k}\left(X_{s,t}(x)\right)~dW^k_t\right]
$$
where $W^k_t$ is the $k$-th component of the Brownian motion. After some calculations we check that
\begin{equation}\label{def-C}
 \begin{array}{l}
\displaystyle d  \,\left[\nabla X_{s,t}(x) \,\nabla X_{s,t}(x)^{\prime}\right]
=\nabla X_{s,t}(x) ~A_t\left(X_{s,t}(x)\right)~\nabla X_{s,t}(x)^{\prime}~dt+d M_{s,t}(x)
\end{array} 
\end{equation} 
with the matrix function $A_t(x)$ defined in (\ref{ref-mat-Upsilon}) and the symmetric matrix valued martingale 
$$
d M_{s,t}(x):=\sum_{1\leq k\leq r}~\nabla X_{s,t}(x) \left[\nabla\sigma_{t,k}\left(X_{s,t}(x)\right)+\nabla\sigma_{t,k}\left(X_{s,t}(x)\right)^{\prime}\right]\nabla X_{s,t}(x)^{\prime}~dW^k_t
$$
These expansions, when combined with condition $(\Ta)_2$, yield the following estimates of the difference between $X_{s,t}(x)$ and $X_{s,t}(y)$.

\begin{prop}
Assume $(\Ta)_2$ is satisfied. Then
  \begin{equation}\label{ref-nablax-estimate-0-bis}
 \EE\left(\Vert X_{s,t}(x)-X_{s,t}(y)\Vert^2\right)^{1/2}\leq \sqrt{d}~e^{-\lambda_A(t-s)}~~\Vert x-y\Vert.
 \end{equation}
In addition, we have the almost sure estimate
  \begin{equation}\label{ref-nablax-estimate-0-ae-bis}
 \nabla\sigma=0\Longrightarrow \Vert X_{s,t}(x)-X_{s,t}(y)\Vert\leq e^{-\lambda_A(t-s)}~~\Vert x-y\Vert
 \end{equation}
 \label{prop:diff_innit}
 \end{prop}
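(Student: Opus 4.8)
The plan is to reduce the estimate to a bound on the tangent process via the fundamental theorem of calculus, and then control the tangent process by feeding the identity (\ref{def-C}) into the relevant quadratic form. Under our standing regularity hypotheses the map $x\mapsto X_{s,t}(x)$ is of class $C^1$, so integrating along the segment from $y$ to $x$ gives
$$
X_{s,t}(x)-X_{s,t}(y)=\int_0^1\left(\nabla X_{s,t}\right)\!\left(y+\epsilon(x-y)\right)^{\prime}\,(x-y)\,d\epsilon .
$$
By the generalized Minkowski inequality,
$$
\EE\left(\Vert X_{s,t}(x)-X_{s,t}(y)\Vert^2\right)^{1/2}\leq\int_0^1\EE\left(\left\Vert\left(\nabla X_{s,t}\right)(z_\epsilon)^{\prime}(x-y)\right\Vert^2\right)^{1/2}d\epsilon,\qquad z_\epsilon:=y+\epsilon(x-y),
$$
so it suffices to bound $\EE\big(\Vert\nabla X_{s,t}(z)^{\prime}v\Vert^2\big)^{1/2}$ uniformly over $z\in\RR^d$, for a fixed vector $v$.

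Fix $z$ and $v$ and set $w_t:=\nabla X_{s,t}(z)^{\prime}v$, so that $\Vert w_t\Vert^2=v^{\prime}\left[\nabla X_{s,t}(z)\,\nabla X_{s,t}(z)^{\prime}\right]v$ and $w_s=v$. Pre- and post-multiplying (\ref{def-C}) by $v^{\prime}$ and $v$ yields
$$
d\Vert w_t\Vert^2=v^{\prime}\,\nabla X_{s,t}(z)\,A_t\!\left(X_{s,t}(z)\right)\nabla X_{s,t}(z)^{\prime}\,v\,dt+v^{\prime}\,dM_{s,t}(z)\,v .
$$
The process $t\mapsto\int_s^t v^{\prime}\,dM_{s,r}(z)\,v$ is a genuine martingale, since its quadratic variation is dominated by $\int_s^t\Vert\nabla X_{s,r}(z)\Vert^4\,dr$, which is integrable because $\nabla b$ and $\nabla\sigma$ are bounded and the first derivative flow has finite moments of every order on bounded horizons. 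Taking expectations and invoking the spectral bound $A_t(\cdot)\leq-2\lambda_A I$ from $(\Ta)_2$ gives
$$
\frac{d}{dt}\,\EE\Vert w_t\Vert^2=\EE\left[v^{\prime}\nabla X_{s,t}(z)A_t(X_{s,t}(z))\nabla X_{s,t}(z)^{\prime}v\right]\leq-2\lambda_A\,\EE\Vert w_t\Vert^2 .
$$
Gronwall's lemma together with $w_s=v$ then yields $\EE\Vert w_t\Vert^2\leq e^{-2\lambda_A(t-s)}\Vert v\Vert^2$. Substituting $v=x-y$ into the Minkowski estimate above produces (\ref{ref-nablax-estimate-0-bis}); in the Euclidean norm the constant is in fact $1$, and the factor $\sqrt d$ only serves to absorb the equivalence constant for a general choice of norm on $\RR^d$.

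For the almost sure estimate (\ref{ref-nablax-estimate-0-ae-bis}), observe that when $\nabla\sigma=0$ the martingale $M_{s,t}(z)$ vanishes identically, its integrand being $\nabla\sigma_{t,k}+\nabla\sigma_{t,k}^{\prime}=0$. The computation above then becomes a pathwise differential inequality: with $A_t=\nabla b_t+(\nabla b_t)^{\prime}$ we get $\frac{d}{dt}\Vert w_t\Vert^2\leq-2\lambda_A\Vert w_t\Vert^2$, hence $\Vert w_t\Vert\leq e^{-\lambda_A(t-s)}\Vert v\Vert$ almost surely, for every $z$ and $v$. This is exactly the uniform operator-norm bound (\ref{ref-nablax-estimate-0-ae}); inserting it into the fundamental-theorem-of-calculus representation — keeping the norm inside the integral, pathwise — yields $\Vert X_{s,t}(x)-X_{s,t}(y)\Vert\leq e^{-\lambda_A(t-s)}\Vert x-y\Vert$.

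No step presents a serious difficulty here: once (\ref{def-C}) and the spectral condition $(\Ta)_2$ are available the decay is a one-line Gronwall argument, and the fundamental-theorem-of-calculus representation uses only the $C^1$-dependence of the flow on its initial condition. The only point meriting a word of justification is that the stochastic integral against $M_{s,t}(z)$ has zero expectation, i.e. the integrability of its quadratic variation, which follows from the boundedness of the gradients of the coefficients and the finiteness of all moments of the tangent process on finite time intervals.
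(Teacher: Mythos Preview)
Your proof is correct and follows essentially the same route as the paper: the fundamental-theorem-of-calculus representation combined with the evolution equation (\ref{def-C}) for $\nabla X_{s,t}\nabla X_{s,t}^{\prime}$, the spectral bound from $(\Ta)_2$, and Gronwall. The one notable difference is that you track the directional quantity $w_t=\nabla X_{s,t}(z)^{\prime}v$ and obtain $\EE\Vert w_t\Vert^2\leq e^{-2\lambda_A(t-s)}\Vert v\Vert^2$, whereas the paper bounds the full Frobenius norm $\EE\Vert\nabla X_{s,t}(z)\Vert_F^2$ and picks up the factor $d$ from the initial condition $\Vert I\Vert_F^2=d$; so your argument in fact yields the constant $1$ rather than $\sqrt d$ (the $\sqrt d$ in the paper comes from this Frobenius-norm choice, not from norm equivalence on $\RR^d$ as you suggest).
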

 
\begin{proof} [Proof of Prop.\ \ref{prop:diff_innit}]
Whenever $(\Ta)_2$ is met, we have the following uniform estimate from (\ref{def-C})  
  \begin{equation}\label{ref-nablax-estimate-0}
(\Ta)_2\Longrightarrow \EE\left(\Vert\nabla X_{s,t}(x)\Vert_2^2\right)^{1/2}\leq \EE\left(\Vert\nabla X_{s,t}(x)\Vert_F^2\right)^{1/2}\leq \sqrt{d}~e^{-\lambda_A(t-s)}
 \end{equation}
 where the $\sqrt{d}$  term arises from imposing the initial condition $\nabla X_{s,s}(x)=I$ on the resulting differential equation for 
 $\partial_t  \EE\left(\Vert\nabla X_{s,t}(x)\Vert_F^2\right)^{1/2}$.
In addition, when $\nabla \sigma=0$ the martingale $M_{s,t}(x)=0$ is null and  as a consequence of (\ref{def-C}) we have the following almost sure estimate
 \begin{equation}\label{ref-nablax-estimate-0-ae-again}
 \Vert\nabla X_{s,t}\Vert_2:=\sup_x\Vert\nabla X_{s,t}(x)\Vert_2\leq e^{-\lambda_A(t-s)}
 \end{equation}
The Taylor expansion
$$
\begin{array}{l}
\displaystyle X_{s,t}(x)-X_{s,t}(y)=\int_0^1~\nabla X_{s,t}(\epsilon x+(1-\epsilon)y)^{\prime}(x-y)~d\epsilon\\
\\
\displaystyle\Longrightarrow \Vert X_{s,t}(x)-X_{s,t}(y)\Vert^2\leq  \left[\int_0^1~\Vert \nabla X_{s,t}(\epsilon x+(1-\epsilon)y)\Vert_2^2~d\epsilon\right]~\Vert x-y\Vert^2
\end{array}
$$
combined with (\ref{ref-nablax-estimate-0}) and (\ref{ref-nablax-estimate-0-ae-again}) completes the proof.
\end{proof}

  These contraction inequalities quantify the stability of the stochastic flow $X_{s,t}(x)$  w.r.t. the initial state $x$.
For instance,  the estimate (\ref{ref-nablax-estimate-0-bis}) ensures that the Markov transition semigroup is exponentially stable; that is, we have that
   \begin{equation}\label{ref-eta-mu-cv}
   \WW_2\left(\mu_0 P_{s,t},\mu_1 P_{s,t}\right)\leq c~\exp{\left[-\lambda_A(t-s)\right]}~  \WW_2\left(\mu_0 ,\mu_1 \right)
 \end{equation}
 For the Langevin diffusions discussed in (\ref{ref-HA-Langevin}) the stochastic flow is time homogeneous; that is we have that $X_{s,t}=X_{t-s}:=X_{0,(t-s)}$
  and $P_{s,t}=P_{t-s}:=P_{0,(t-s)}$.
 In addition when $\sigma(x)=\sigma~I$, the diffusion flow $X_t(x)$ has a single invariant measure on $\RR^d$ given by
 the Boltzmann-Gibbs measure
  \begin{equation}\label{ref-gibbs}
 \pi(dx)=\frac{1}{Z}~\exp{\left(-\frac{2}{\sigma^2}\,U(x)\right)}~dx\quad \mbox{\rm with}\quad Z:=\int~~ e^{-\frac{2}{\sigma^2}U(x)}~dx
 \end{equation}
From (\ref{ref-HA-Langevin}), it follows that
 $$
  \nabla^2 U\geq \lambda~I\quad\Longrightarrow\quad
   \WW_n\left(\mu  P_{s,t},\pi\right)\leq \exp{\left[-\lambda(t-s)\right]}~  \WW_n\left(\mu ,\pi \right)
 $$ for all $n \geq 1$.

 Taking the trace in (\ref{def-C}) we also find that
$$
 \begin{array}{l}
\displaystyle d  \,\Vert \nabla X_{s,t}(x)\Vert^2_F
=\tr\left[\nabla X_{s,t}(x) ~A_t\left(X_{s,t}(x)\right)~\nabla X_{s,t}(x)^{\prime}\right]~dt+d N_{s,t}(x)
\end{array} 
$$
with the martingale
$$
d N_{s,t}(x)=\sum_{1\leq k \leq r} \tr\left(\nabla X_{s,t}(x) \left[\nabla\sigma_{t,k}\left(X_{s,t}(x)\right)+\nabla\sigma_{t,k}\left(X_{s,t}(x)\right)^{\prime}\right]\nabla X_{s,t}(x)^{\prime}\right)~dW^k_t
$$
Observe that
$$
\partial_t\langle N_{s,\point}(x)\rangle_t=\sum_k~\tr\left(\nabla X_{s,t}(x) \left[\nabla\sigma_{t,k}\left(X_{s,t}(x)\right)+\nabla\sigma_{t,k}\left(X_{s,t}(x)\right)^{\prime}\right]\nabla X_{s,t}(x)^{\prime}\right)^2
$$
This implies that
$$
 \begin{array}{l}
\displaystyle \partial_t\EE\left(\Vert \nabla X_{s,t}(x)\Vert^4_F\right)
=2~\EE\left(\Vert \nabla X_{s,t}(x)\Vert^2_F~\tr\left[\nabla X_{s,t}(x) ~A_t\left(X_{s,t}(x)\right)~\nabla X_{s,t}(x)^{\prime}\right]\right)\\\
\\
\hskip3cm\displaystyle+\sum_{1\leq k\leq r}\EE\left(\tr\left(\nabla X_{s,t}(x) \left[\nabla\sigma_{t,k}\left(X_{s,t}(x)\right)+\nabla\sigma_{t,k}\left(X_{s,t}(x)\right)^{\prime}\right]\nabla X_{s,t}(x)^{\prime}\right)^2\right)
\end{array} 
$$
Whenever $(\Ta)_2$ is met, we have the estimate
$$
 \begin{array}{l}
\displaystyle \partial_t\EE\left(\Vert \nabla X_{s,t}(x)\Vert^4_F\right)
\leq -4\left[\lambda_A-\rho(\nabla\sigma)^2\right]~\EE\left(\Vert \nabla X_{s,t}(x)\Vert^4_F\right)
\end{array} 
$$
with the uniform log-norm parameter $\rho(\nabla\sigma)$ defined in (\ref{def-nabla-sigma}).
This yields the estimate
$$
 \partial_t\EE\left(\Vert \nabla X_{s,t}(x)\Vert^4_F\right)^{1/4}
\leq -\left[\lambda_A-\rho(\nabla\sigma)^2\right]~\EE\left(\Vert \nabla X_{s,t}(x)\Vert^4_F\right)^{1/4}
$$
More generally, we readily check the following result.
\begin{prop}\label{def-4th-prop}
When condition $(\Ta)_n$ is met we have the following time-uniform bounds,
\begin{equation}\label{def-4th}
\EE\left(\Vert \nabla X_{s,t}(x)\Vert^{n}_F\right)^{1/n}\leq \sqrt{d}~e^{-\left[\lambda_A-(n-2)\rho(\nabla\sigma)^2/2\right](t-s)}
\end{equation} 
\end{prop}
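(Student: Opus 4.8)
The plan is to mimic, for a general even-type exponent $n$, the computation already carried out for the fourth moment, and then to pass to general real $n\geq 2$ by a density/approximation argument or simply by working with $\EE(\|\nabla X_{s,t}(x)\|_F^{n})$ directly via It\^o's formula applied to the function $m\mapsto m^{n/2}$ evaluated at $m_t:=\|\nabla X_{s,t}(x)\|_F^2$. First I would record, from (\ref{def-C}), the scalar It\^o dynamics of $m_t=\tr(\nabla X_{s,t}(x)\nabla X_{s,t}(x)^{\prime})$: the drift is $\tr[\nabla X_{s,t}(x)A_t(X_{s,t}(x))\nabla X_{s,t}(x)^{\prime}]$ and the martingale part $N_{s,t}(x)$ has quadratic variation density $\sum_k \tr(\nabla X_{s,t}(x)[\nabla\sigma_{t,k}+\nabla\sigma_{t,k}^{\prime}](X_{s,t}(x))\nabla X_{s,t}(x)^{\prime})^2$, exactly as displayed just above the statement. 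Applying It\^o to $\varphi(m)=m^{n/2}$ gives
\[
\partial_t \EE(m_t^{n/2})=\frac{n}{2}\,\EE\!\left(m_t^{n/2-1}\,\partial_t\langle m\rangle^{\mathrm{drift}}_t\right)+\frac{n}{2}\Big(\frac{n}{2}-1\Big)\,\frac12\,\EE\!\left(m_t^{n/2-2}\,\partial_t\langle N_{s,\cdot}(x)\rangle_t\right),
\]
and the task is to bound each term by a negative multiple of $\EE(m_t^{n/2})$.

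The key inputs are two pointwise linear-algebra estimates, valid $\omega$-wise. The first is the log-norm bound coming from $(\Ta)_n$: since $A_t(y)\leq -2\lambda_A I$, for any matrix $U$ we have $\tr(U A_t(y)U^{\prime})\leq -2\lambda_A\,\|U\|_F^2$, which controls the drift term by $-n\lambda_A\,\EE(m_t^{n/2})$. The second handles the quadratic-variation term: writing $S_{t,k}(y):=\nabla\sigma_{t,k}(y)+\nabla\sigma_{t,k}(y)^{\prime}$, one has $|\tr(U S_{t,k}(y) U^{\prime})|\leq \rho_{\star}(\nabla\sigma)\cdot c_d\cdot\|U\|_F^2$ for an appropriate dimensional constant, because $S_{t,k}(y)/2$ has largest eigenvalue at most $\rho(\nabla\sigma_{t,k})\leq\rho_{\star}(\nabla\sigma)$; squaring and summing over the $r$ columns produces a bound of the form $C\,d\,\rho_{\star}(\nabla\sigma)^2\,m_t^2$ with the constant tracking the passage between spectral and Frobenius norms. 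Feeding these into the displayed $\partial_t$ identity yields
\[
\partial_t\EE(m_t^{n/2})\leq -n\Big(\lambda_A-\frac{n-2}{2}\,d\,\rho_{\star}(\nabla\sigma)^2\Big)\,\EE(m_t^{n/2}),
\]
i.e. decay at rate $2\lambda_A(n)$ after re-expressing in terms of $\EE(m_t^{n/2})^{2/n}$; Gr\"onwall's lemma and the initial condition $\nabla X_{s,s}(x)=I$, for which $m_s=d$ hence $\EE(m_s^{n/2})^{1/n}=\sqrt d$, then give precisely (\ref{def-4th}). (The condition $\lambda_A(n)>0$ in $(\Ta)_n$ is exactly what makes the rate positive, and also what a priori guarantees, together with $(\Ma)$-type arguments, that the $n$-th moments stay finite so that the differentiation under the expectation is legitimate.)

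The main obstacle I anticipate is twofold and essentially technical rather than conceptual. First, justifying the It\^o/Gr\"onwall manipulation requires a priori finiteness and regularity of $t\mapsto\EE(\|\nabla X_{s,t}(x)\|_F^n)$ and the right to differentiate under the expectation; this is where one invokes the standing third-order-derivative boundedness hypothesis on $(b_t,\sigma_t)$ (which gives existence of all moments of $\nabla X_{s,t}$), possibly after a localization/stopping-time argument to handle the martingale term rigorously, followed by a limit. Second, one must be careful with the dimensional constants in the norm comparisons: the statement is phrased with $\rho_{\star}(\nabla\sigma)$ and a clean factor, so the chain of inequalities $\|U\|_2\leq\|U\|_F$, $\tr(U A U^{\prime})\leq \lambda_{\max}(A)\|U\|_F^2$, and the columnwise bookkeeping of $\rho(\nabla\sigma)^2=\sum_k\rho(\nabla\sigma_k)^2\leq r\,\rho_{\star}(\nabla\sigma)^2$ must be arranged to land on exactly the exponent $\lambda_A-(n-2)\rho(\nabla\sigma)^2/2$ claimed — note that for $n=4$ this already matches the bound $\lambda_A-\rho(\nabla\sigma)^2$ derived explicitly in the text just before the proposition, which is a useful consistency check on the constants. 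Everything else is the routine $\varphi(m)=m^{n/2}$ It\^o computation.
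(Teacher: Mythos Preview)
Your approach—apply It\^o to $m_t^{n/2}$ with $m_t=\|\nabla X_{s,t}(x)\|_F^2$, bound the drift via $A_t\leq -2\lambda_A I$ and the quadratic variation via the log-norm parameters, then Gr\"onwall from the initial value $m_s=d$—is exactly what the paper does; the text carries out the $n=4$ case explicitly just before the proposition and then states that the general case is checked in the same way.

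One correction on the constants, since your displayed inequality and your reference to the rate ``$2\lambda_A(n)$'' are both off. The quadratic-variation bound is simply
\[
\sum_{k}\bigl(\tr(U S_{t,k}U^{\prime})\bigr)^2\;\leq\;\sum_{k}4\,\rho(\nabla\sigma_k)^2\,\|U\|_F^4\;=\;4\,\rho(\nabla\sigma)^2\,\|U\|_F^4,
\]
with no dimensional factor $c_d$ and with the \emph{summed} parameter $\rho(\nabla\sigma)^2=\sum_k\rho(\nabla\sigma_k)^2$ rather than $\rho_\star$. Plugging this into the It\^o identity gives
\[
\partial_t\,\EE\bigl(m_t^{n/2}\bigr)\;\leq\;-n\Bigl[\lambda_A-\tfrac{n-2}{2}\,\rho(\nabla\sigma)^2\Bigr]\,\EE\bigl(m_t^{n/2}\bigr),
\]
which is precisely the exponent in (\ref{def-4th}). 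This rate is \emph{not} the parameter $\lambda_A(n)$ of (\ref{ref-lambda-A-n}): the latter carries an extra factor $d$ and uses $\rho_\star(\nabla\sigma)$, and it is reserved for the Hessian estimates in Proposition~\ref{prop-nabla-2-estimate}, not for the tangent process.
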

  \subsection{The Hessian process}\label{hessian-sec}

In terms of the tensor product  (\ref{tensor-notation}), we have the matrix diffusion equation
$$
 \begin{array}{l}
d \, \nabla^2 X_{s,t}(x)\\
\\
=\left[\left[\nabla X_{s,t}(x)\otimes \nabla X_{s,t}(x)\right]\nabla^2b_t(X_{s,t}(x))+\nabla^2 X_{s,t}(x)\nabla b_t(X_{s,t}(x))\right]dt+d \Ma_{s,t}(x)
\end{array}$$
with the null matrix initial condition $\nabla^2 X_{s,s}(x)=0$ and the matrix-valued martingale
$$
d \Ma_{s,t}(x)=\sum_{1\leq k\leq r}\left(\left[\nabla X_{s,t}(x)\otimes \nabla X_{s,t}(x)\right]\nabla^2\sigma_{t,k}(X_{s,t}(x))+\nabla^2 X_{s,t}(x)\nabla \sigma_{t,k}(X_{s,t}(x))\right)~dW^k_t
$$
Consider the tensor functions
\begin{equation}\label{tensor-functions-ref}
\upsilon_t:=
\sum_{1\leq k\leq d}(\nabla^2\sigma_{t,k})~(\nabla^2 \sigma_{t,k})^{\prime}\quad \mbox{\rm and}\quad \tau_t:=\nabla^2b_t+
\sum_{1\leq k\leq d}(\nabla^2\sigma_{t,k})~(\nabla\sigma_{t,k})^{\prime}
\end{equation}

After some computations, we check that
$$
 \begin{array}{l}
\displaystyle d \, \left[\nabla^2 X_{s,t}(x)\nabla^2 X_{s,t}(x)^{\prime}\right]\\
\\
\displaystyle=\left\{\left[\nabla^2 X_{s,t}(x)~A_t(X_{s,t}(x))~\nabla^2 X_{s,t}(x)^{\prime}\right]+2\left[\left[\nabla X_{s,t}(x)\otimes \nabla X_{s,t}(x)\right]~\tau_t(X_{s,t}(x))~\nabla^2 X_{s,t}(x)^{\prime}\right]_{\tiny sym}\right.\\
\\
\displaystyle\hskip3cm\left.+\left[\left[\nabla X_{s,t}(x)\otimes \nabla X_{s,t}(x)\right]\upsilon_t(X_{s,t}(x))\left[\nabla X_{s,t}(x)\otimes \nabla X_{s,t}(x)\right]^{\prime}\right]\right\}dt+d \Na_{s,t}(x)
\end{array}$$
with the matrix function $A_t(x)$ defined in (\ref{ref-mat-Upsilon}) and the tensor-valued martingale 
$$
 \begin{array}{l}
\displaystyle d \Na_{s,t}(x:)=2~\sum_{1\leq k\leq r}
\left\{\left[\nabla X_{s,t}(x)\otimes \nabla X_{s,t}(x)\right]~\nabla^2\sigma_{t,k}(X_{s,t}(x))~\nabla^2 X_{s,t}(x)^{\prime}\right.\\
\\
\displaystyle\hskip6cm\left.+\nabla^2 X_{s,t}(x)~\nabla \sigma_{t,k}(X_{s,t}(x))~\nabla^2 X_{s,t}(x)^{\prime}\right\}_{\tiny sym}~dW^k_t
\end{array}
$$
When $\nabla\sigma=0$ the above equation reduces to
$$
 \begin{array}{l}
\displaystyle \partial_t\, \left[\nabla^2 X_{s,t}(x)\nabla^2 X_{s,t}(x)^{\prime}\right]\\
\\
\displaystyle=\left[\nabla^2 X_{s,t}(x)~A_t(X_{s,t}(x))~\nabla^2 X_{s,t}(x)^{\prime}\right]+2\left[\left[\nabla X_{s,t}(x)\otimes \nabla X_{s,t}(x)\right]~\nabla^2b_t(X_{s,t}(x))~\nabla^2 X_{s,t}(x)^{\prime}\right]_{\tiny sym}
\end{array}$$
Whenever $(\Ta)_2$ is met, taking the trace in the above display we check that
$$
 \partial_t\, \Vert\nabla^2 X_{s,t}(x)\Vert_F^2\leq -2\lambda_A~\Vert\nabla^2 X_{s,t}(x)\Vert_F^2+2\Vert\nabla^2b\Vert_{F}~\Vert\nabla X_{s,t}(x)\Vert_F^2~\Vert\nabla^2 X_{s,t}(x)\Vert_F
$$
This yields the estimate
$$
 \partial_t\, \Vert\nabla^2 X_{s,t}(x)\Vert_F\leq -\lambda_A~\Vert\nabla^2 X_{s,t}(x)\Vert_F+\Vert\nabla^2b\Vert_{F}~\Vert\nabla X_{s,t}(x)\Vert_F^2
$$
Using (\ref{ref-nablax-estimate-0-ae}) this implies that
$$
\Vert\nabla^2 X_{s,t}(x)\Vert_F\leq \Vert\nabla^2b\Vert_{F}~e^{-\lambda_A(t-s)}~\int_s^t~e^{\lambda_A(u-s)}~\Vert\nabla X_{s,u}(x)\Vert_F^2~du\leq 
\frac{d}{\lambda_A}~\Vert\nabla^2b\Vert_{F}~e^{-\lambda_A(t-s)}
$$
 This ends the proof of  the almost sure estimate (\ref{ref-nabla2x-estimate-0-ae}).

For more general models,  we have that
$$
 \begin{array}{l}
\displaystyle d \, \Vert \nabla^2 X_{s,t}(x)\Vert^2_F\\
\\
\displaystyle=\left\{\tr\left[\nabla^2 X_{s,t}(x)~A_t(X_{s,t}(x))~\nabla^2 X_{s,t}(x)^{\prime}\right]+2~\tr\left[\left[\nabla X_{s,t}(x)\otimes \nabla X_{s,t}(x)\right]~\tau_t(X_{s,t}(x))~\nabla^2 X_{s,t}(x)^{\prime}\right]\right.\\
\\
\displaystyle\hskip3cm\left.+\tr\left[\left[\nabla X_{s,t}(x)\otimes \nabla X_{s,t}(x)\right]\upsilon_t(X_{s,t}(x))\left[\nabla X_{s,t}(x)\otimes \nabla X_{s,t}(x)\right]^{\prime}\right]\right\}dt+d M_{s,t}(x)
\end{array}$$
with a continuous martingale $M_{s,t}(x)$ with angle bracket
$$
 \begin{array}{l}
\displaystyle \partial_t\langle M_{s,\point}(x)\rangle_t\\
\\
\displaystyle=4~\sum_{1\leq k\leq r}\tr
\left\{\left[\nabla X_{s,t}(x)\otimes \nabla X_{s,t}(x)\right]~\nabla^2\sigma_{t,k}(X_{s,t}(x))~\nabla^2 X_{s,t}(x)^{\prime}\right.\\
\displaystyle\hskip7cm\left.+\nabla^2 X_{s,t}(x)~\nabla \sigma_{t,k}(X_{s,t}(x))~\nabla^2 X_{s,t}(x)^{\prime}\right\}^2
\end{array}
$$
\begin{prop}\label{prop-nabla-2-estimate}
Assume  $(\Ta)_n$ is met. In this situation, for any  $\epsilon>0$ s.t. $\lambda_A(n)>\epsilon$ we have
\begin{equation}\label{eq-prop-nabla-2-estimate}
\EE\left(\Vert \nabla^2 X_{s,t}(x)\Vert^{n}_F\right)^{1/n}\leq  n~\epsilon^{-1}~\rchi(b,\sigma)~\exp{\left(-\left[\lambda_A(n)-\epsilon\right](t-s)\right)}
\end{equation}
with the parameters $\rchi(b,\sigma)$ and $\lambda_A(n)$ defined in (\ref{def-chi-b}) and (\ref{ref-lambda-A-n}).
\end{prop}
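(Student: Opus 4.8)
The plan is to derive a scalar differential inequality for $\phi_n(t):=\EE\big(\Vert\nabla^2 X_{s,t}(x)\Vert_F^{\,n}\big)$ and then integrate it (the case $\nabla\sigma=0$ having already been handled above). The starting point is the Itô equation for $Z_{s,t}:=\Vert\nabla^2 X_{s,t}(x)\Vert_F^2$ displayed just before the statement, whose drift and martingale angle bracket are written out there explicitly. First I would bound these: using the matrix inequality $A_t\le -2\lambda_A I$ from $(\Ta)_n$, the log-norm bound $(\nabla\sigma_{t,k})_{\mathrm{sym}}\le \rho_\star(\nabla\sigma)\,I$, the identity $\Vert\nabla X_{s,t}\otimes\nabla X_{s,t}\Vert_F=\Vert\nabla X_{s,t}\Vert_F^2$, and the tensor Cauchy--Schwarz and sub-multiplicativity inequalities of Section~\ref{notation-sec}, the drift of $Z_{s,t}$ is at most
\[
-2\lambda_A\,Z_{s,t}+2\,\Vert\tau\Vert\,\Vert\nabla X_{s,t}\Vert_F^2\,Z_{s,t}^{1/2}+\Vert\upsilon\Vert\,\Vert\nabla X_{s,t}\Vert_F^4 ,
\]
while $\partial_t\langle M_{s,\cdot}(x)\rangle_t$ is at most $c\,\rho(\nabla\sigma)^2\,Z_{s,t}^2+c\,\Vert\nabla^2\sigma\Vert^2\,\Vert\nabla X_{s,t}\Vert_F^4\,Z_{s,t}$; the essential feature is that the ``purely Hessian'' parts are quadratic in $Z_{s,t}$ with coefficient controlled by $\rho(\nabla\sigma)^2\le d\,\rho_\star(\nabla\sigma)^2$.

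Next I would apply Itô's formula to $Z_{s,t}^{n/2}=\Vert\nabla^2 X_{s,t}(x)\Vert_F^{\,n}$ and take expectations, killing the local martingale. The $-2\lambda_A Z_{s,t}$ drift contributes $-n\lambda_A\,\phi_n(t)$, and the Itô correction $\tfrac12\tfrac n2(\tfrac n2-1)Z_{s,t}^{n/2-2}\,d\langle M\rangle_t$ contributes (keeping only the $Z_{s,t}^2$ part of the bracket) a term $+\tfrac{d(n-2)}{2}\,\rho_\star(\nabla\sigma)^2\cdot n\,\phi_n(t)$; together these are exactly $-n\lambda_A(n)\,\phi_n(t)$, which explains why $\lambda_A(n)=\lambda_A-\tfrac{d(n-2)}{2}\rho_\star(\nabla\sigma)^2$ is the right quantity. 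Every remaining contribution is a ``source'' term of the form $\EE\big(\Vert\nabla^2 X_{s,t}\Vert_F^{\,n-1}\Vert\nabla X_{s,t}\Vert_F^2\big)$ or $\EE\big(\Vert\nabla^2 X_{s,t}\Vert_F^{\,n-2}\Vert\nabla X_{s,t}\Vert_F^4\big)$, with coefficients dominated by $\Vert\nabla^2 b\Vert$, $\Vert\nabla^2\sigma\Vert^2$ and $\rho_\star(\nabla\sigma)^2$, hence by $\rchi(b,\sigma)$.

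For any $\epsilon>0$ with $\lambda_A(n)>\epsilon$, I would then use Young's inequality on each source term, peeling off a piece $\le \epsilon\, n\,\phi_n(t)$ that is absorbed into the main term (degrading the rate to $\lambda_A(n)-\epsilon$) and leaving a remainder proportional to a power of $\epsilon^{-1}$ times $\rchi(b,\sigma)^{\,n}\,\EE\big(\Vert\nabla X_{s,t}\Vert_F^{\,2n}\big)$, which decays exponentially by the tangent-process moment bound of Proposition~\ref{def-4th-prop} (used with the appropriate conditioning, exploiting that that bound is uniform in the initial point). This gives $\phi_n'(t)\le -n(\lambda_A(n)-\epsilon)\,\phi_n(t)+c_n\,\epsilon^{-a}\,\rchi(b,\sigma)^{\,n}\,e^{-\mu(t-s)}$ with $\phi_n(s)=0$; integrating this linear inequality and estimating the convolution integral — using $\sup_{v\ge0}v\,e^{-\epsilon v}=1/(\epsilon e)$ to cover the borderline case $\mu=n\lambda_A(n)$ — produces a bound of order $(n\epsilon^{-1})^{\,n}\,\rchi(b,\sigma)^{\,n}\,e^{-n(\lambda_A(n)-\epsilon)(t-s)}$, and taking $n$-th roots and absorbing the resulting universal constant into the additive constant $c$ inside $\rchi(b,\sigma)$ yields the stated estimate.

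The main obstacle is the bookkeeping in the middle steps: isolating from the drift of $Z^{n/2}$ and from the Itô correction of the angle bracket exactly the coefficient $n\lambda_A(n)$ in front of $\phi_n(t)$ (this forces the precise shape of $\lambda_A(n)$ and hinges on bounding $\rho(\nabla\sigma)^2$ by $d\,\rho_\star(\nabla\sigma)^2$ and on shunting the bracket's cross term into the source), and choosing the Young exponents so that the surviving source terms carry the correct $\rchi(b,\sigma)$-dependence and a fast enough exponential decay — through the tangent-process estimates — not to spoil the target rate. The parameter $\epsilon$ is exactly what turns the merely polynomially-improved Gronwall estimate into a uniform-in-$(t-s)$ exponential bound, at the cost of the $n\epsilon^{-1}$ prefactor.
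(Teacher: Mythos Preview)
Your proposal is correct and follows essentially the same route as the paper. The only organizational difference is that the paper first isolates the It\^o--Young--Gronwall step into a standalone lemma for a generic nonnegative process $Z_t$ satisfying $dZ_t\le(-\lambda Z_t+\alpha_t\sqrt{Z_t}+\beta_t)\,dt+dM_t$ with $\partial_t\langle M\rangle_t\le(u_t\sqrt{Z_t}+v_tZ_t)^2$, and then specializes to $Z_t=\Vert\nabla^2X_{s,t}(x)\Vert_F^2$; you carry out the same computation inline by applying It\^o directly to $Z^{n/2}$. The identification of the coefficient $\lambda_A(n)$ from the $v_t^2Z_t^2$ part of the bracket, the absorption of the cross and source terms via Young's inequality at the cost of $\epsilon$, and the appeal to Proposition~\ref{def-4th-prop} for the tangent factor are all the same.
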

In the above display, $\rho_{\star}(\nabla \sigma)$ is defined in (\ref{def-nabla-sigma}).
The proof of the above estimate is technical and thus housed in the appendix on page~\pageref{prop-nabla-2-estimate-proof}

\subsection{Bismut-Elworthy-Li formulae}
We further assume that ellipticity condition (\ref{elip}) is met.
In this situation, we can extend gradient semigroup formulae to measurable functions using the Bismut-Elworthy-Li formula
    \begin{equation}\label{bismut-omega}
\nabla P_{s,t}(f)(x)=
    \EE\left(f(X_{s,t}(x))~  \tau^{\omega}_{s,t}(x)\right)    \end{equation}
    with the stochastic process
    $$
     \tau^{\omega}_{s,t}(x):=\int_s^t~   \partial_u \omega_{s,t}(u)~
\nabla X_{s,u}(x)~    a_u(X_{s,u}(x))^{-1/2 }~dW_u
    $$
The above formula is valid for any function $\omega_{s,t}:u\in [s,t]\mapsto \omega_{s,t}(u)\in \RR $ of the following form 
    \begin{equation}\label{bismut-omega-varphi}
\omega_{s,t}(u)=\varphi\left((u-s)/(t-s)\right)~\Longrightarrow   \partial_u \omega_{s,t}(u)=\frac{1}{t-s}~\partial\varphi\left((u-s)/(t-s)\right)~
      \end{equation}
   for some non decreasing differentiable function $\varphi$ on $[0,1]$ with bounded continuous derivatives and such that
  $$
  (\varphi(0),\varphi(1))=(0,1)\Longrightarrow \omega_{s,t}(t)-\omega_{s,t}(s)=1
  $$
Whenever $(\Ta)_2$ is met, combining (\ref{ref-nablax-estimate-0}) with (\ref{bismut-omega}), for any $f$ s.t. $\Vert f\Vert\leq 1$   we check that
  \begin{eqnarray*}
 \Vert \nabla P_{s,t}(f)\Vert^2&\leq& 
 \EE\left( \Vert\tau^{\omega}_{s,t}(x)\Vert^2 \right)\\
 &\leq& \kappa_1~ \int_s^t~e^{-2\lambda_A (u-s)}~
 \Vert \partial_u \omega^{s,t}(u)\Vert^2~du=~\frac{ \kappa_1}{t-s}~ \int_0^1~e^{-2\lambda_A (t-s)v}~
 \left( \partial\varphi(v)\right)^2~dv
\end{eqnarray*}
Let $\varphi_{\epsilon}$ with $\epsilon\in ]0,1[$ be some differentiable function on $[0,1]$ null on $[0,1-\epsilon]$ and such that $\vert \partial\varphi_{\epsilon}(u)\vert\leq c/\epsilon$
and
$(\varphi_{\epsilon}(1-\epsilon),\varphi(1))=(0,1)$. For instance we can choose
$$
\varphi_{\epsilon}(u)=\left\{
\begin{array}{ccl}
0&\mbox{\rm if}&u\in [0,1-\epsilon]\\
\displaystyle1+\cos{\left(\left(1+\frac{1-u}{\epsilon}\right)\frac{\pi}{2}\right)}&\mbox{\rm if}&u\in [1-\epsilon,1]
\end{array}
\right.
$$
 In this situation, we check that
$$
 \Vert \nabla P_{s,t}(f)\Vert^2\leq ~\frac{\kappa_2}{\epsilon^2}~\frac{1}{t-s}~ \int_{1-\epsilon}^1~e^{-2\lambda_A (t-s)v}~dv
$$
from which we find the rather crude uniform estimate
\begin{equation}\label{bismut-est}
 \Vert \nabla P_{s,t}(f)\Vert\leq \frac{ \kappa}{\epsilon}~\frac{1}{\sqrt{t-s}}~e^{-\lambda_A(1-\epsilon) (t-s)}
\end{equation}

 In the same vein, for any $s\leq u\leq t$ we have the formulae
      \begin{eqnarray}
\nabla^2 P_{s,t}(f)(x)&=&   \EE\left(f(X_{s,t}(x))~\tau^{[2],\omega}_{s,t}(x)+\nabla X_{s,t}(x)\,\nabla f(X_{s,t}(x))~  \tau^{\omega}_{s,t}(x)^{\prime}\right)\label{bismut-omega-2-grad}\\
&=&    \EE\left(f(X_{s,t}(x))~  \left[\tau^{[2],\omega}_{s,u}(x)+\nabla X_{s,u}(x)~\tau^{\omega}_{u,t}(X_{s,u}(x))\,\tau^{\omega}_{s,u}(x)^{\prime}\right]\right) \label{bismut-omega-2}   \end{eqnarray}
    with the process
    $$
         \begin{array}{l}
     \tau^{[2],\omega}_{s,t}(x)\\
     \\
  \displaystyle   :=\int_s^t~   \partial_u \omega_{s,t}(u)~\left[\nabla^2 X_{s,u}(x)~ a_u(X_{s,u}(x))^{-1/2 }+
     \left(\nabla X_{s,u}(x)\otimes \nabla X_{s,u}(x)\right)~ (\overline{\nabla} a_u^{-1/2})(X_{s,u}(x))\right]~dW_u
   \end{array}$$
 In the above display $\overline{\nabla} a^{-1/2}_u$ stands for the tensor function
  $$
(\overline{\nabla} a^{-1/2}_u(x))_{(i,j),k}:= \partial_{x_i}a^{-1/2}_u(x)_{j,k}=-\left(a_u^{-1/2}(x)\left[ \partial_{x_i}a_u^{1/2}(x)\right]a_u^{-1/2}(x)\right)_{j,k}
  $$ 
  A detailed proof of the formulae (\ref{bismut-omega-2-grad}) and (\ref{bismut-omega-2})
  in the context of nonlinear diffusion flows can be found in the appendix in~\cite{mp-var-19}.
  
  Observe that
  $$
  (\ref{elip})\Longrightarrow
 \sup_i \Vert  \partial_{x_i}a^{-1/2}_u(x)\Vert\leq c~\Vert \nabla\sigma\Vert/ \upsilon 
  $$
  Whenever $(\Ta)_2$ is met, 
using the estimate (\ref{prop-nabla-2-estimate}) for any $\epsilon\in ]0,1[$ 
\begin{equation}\label{bismut-est-P2-grad}
 \Vert \nabla^2 P_{s,t}(f)\Vert\leq\frac{\kappa}{\epsilon}~\frac{1}{\sqrt{t-s}}~e^{-\lambda_A (t-s) (1-\epsilon)}~\left(\Vert f\Vert+\Vert\nabla f\Vert\right)~\end{equation}

In the same vein, using (\ref{bismut-omega-2}) for any $u\in ]s,t[$ and any bounded measurable function $f$ s.t.
$\Vert f\Vert\leq 1$ we also check the rather crude uniform estimate
$$
 \Vert \nabla^2 P_{s,t}(f)\Vert\leq \frac{\kappa_1}{\epsilon}~\frac{1}{\sqrt{u-s}}~e^{-\lambda_A (u-s) (1-\epsilon)}+\frac{\kappa_2}{\epsilon^2}~\frac{1}{\sqrt{(t-u)(u-s)}}~e^{-\lambda_A (u-s) }~e^{-\lambda_A (t-s) (1-\epsilon)}
$$
Choosing $u=s+(1-\epsilon)(t-s)$ in the above display we check that  for any $\epsilon\in ]0,1[$  we obtain the uniform estimate
\begin{equation}\label{bismut-est-P2}
 \Vert \nabla^2 P_{s,t}(f)\Vert\leq \frac{c_1}{\epsilon\sqrt{1-\epsilon}}~\frac{1}{\sqrt{t-s}}~e^{-\lambda_A (1-\epsilon)^2(t-s)}+\frac{c_2}{\epsilon^2}\frac{1}{\sqrt{\epsilon(1-\epsilon)}}~\frac{1}{t-s}~ e^{-2\lambda_A (1-\epsilon)(t-s)}
\end{equation}
The extended versions of the above formulae  in the context of diffusions on differentiable 
  manifolds can be found in the series of articles~\cite{aht-03,bismut,Elworthy,xm-li,thompson}.
 \section{Backward semigroup analysis}\label{proof-theo-al-gr}
 \subsection{The two-sided stochastic integration}\label{two-sided}

For any given time horizon $s\leq t$ we have the rather well known backward stochastic flow equation
\begin{equation}\label{ref-backward-flow}
X_{s,t}(x)=x+\int_s^t\left[\nabla X_{u,t}(x)^{\prime}~b_u(x)+\frac{1}{2}~~ \nabla ^2X_{u,t}(x)^{\prime}~a_u(x)~\right]~du+\int_s^t\nabla X_{u,t}(x)^{\prime}\sigma_u(x)~dW_u
\end{equation}
The right hand side integral is understood as a conventional backward It\^o-integral.
In a more synthetic form, the above backward formula reduces to (\ref{backward-synthetic}).

An elementary proof of the above formula based on Taylor expansions is presented in~\cite{daprato-3}, different approaches can also be found in~\cite{kunita-2} and
~\cite{krylov}. Extensions of the backward It\^o formula (\ref{ref-backward-flow}) to jump type diffusion models as well as  nonlinear diffusion flows can also be found in~\cite{daprato-2} and in the appendix of~\cite{mp-var-18}.

Consider the discrete time interval $[s,t]_h:=\{u_0,\ldots,u_{n-1}\}$ associated with some refining time mesh $u_{i+1}=u_i+h$ from $u_0=s$ to $u_{n}=t$, for some time step $h>0$. In this notation, combining (\ref{Alekseev-grobner-intro-0}) with  (\ref{Alekseev-grobner-intro-ref-2})  for any $u\in [s,t]_h$ we have the Taylor type approximation
$$
\begin{array}{l}
X_{u+h,t}\circ \overline{X}_{s,u+h}-X_{u,t}\circ \overline{X}_{s,u}\\
\\
\displaystyle\simeq -\left(\left(\nabla X_{u+h,t}\right)(\overline{X}_{s,u}(x))^{\prime}~\Delta b_u(\overline{X}_{s,u}(x))
+\frac{1}{2}\,\left(\nabla ^2X_{u+h,t}\right)(\overline{X}_{s,u}(x))^{\prime}~\Delta a_u(\overline{X}_{s,u}(x))~\right)~h\\
\\
\displaystyle\hskip3cm- \left(\nabla X_{u+h,t}\right)(\overline{X}_{s,u}(x))^{\prime}~\Delta\sigma_u(\overline{X}_{s,u}(x))~(W_{u+h}-W_u)
\end{array}
$$
This yields the  interpolating forward-backward telescoping sum formula
\begin{equation}\label{ref-telescoping-sum}
\begin{array}{l}
X_{s,t}(x)-\overline{X}_{s,t}(x)\\
\\
\displaystyle=-\sum_{u\in [s,t]_h}\left[X_{u+h,t}(\overline{X}_{s,u+h}(x))-X_{u,t}\left(\overline{X}_{s,u}(x)\right)\right]\\
\\
\displaystyle\simeq\sum_{u\in [s,t]_h}\left(\left(\nabla X_{u+h,t}\right)(\overline{X}_{s,u}(x))^{\prime}~\Delta b_u(\overline{X}_{s,u}(x))
+\frac{1}{2}\,\left(\nabla ^2X_{u+h,t}\right)(\overline{X}_{s,u}(x))^{\prime}~\Delta a_u(\overline{X}_{s,u}(x))~\right)~h\\
\\
\hskip3cm\displaystyle+\sum_{u\in [s,t]_h}\left(\nabla X_{u+h,t}\right)(\overline{X}_{s,u}(x))^{\prime}~\Delta\sigma_u(\overline{X}_{s,u}(x))~(W_{u+h}-W_u)
\end{array}
\end{equation}

We obtain formally (\ref{Alekseev-grobner}) by summing the above terms and passing to the limit $h\downarrow 0$.

To be more precise, we follow the  two-sided stochastic integration calculus introduced by Pardoux and Protter in~\cite{pardoux-protter}. As mentioned by the authors this methodology can be seen as a variation of It\^o original construction of the stochastic integral.
In this framework, 
 the Skorohod stochastic integral (\ref{def-S-sk}) arising in (\ref{ref-interp-du}) is defined by the $\LL_2$-convergence 
  \begin{equation}\label{sk-integral}
  \begin{array}[b]{l}
S_{s,t}(\varsigma)(x)\\
\\
\displaystyle:=\lim_{h\rightarrow 0}\sum_{u\in [s,t]_h} \left(\nabla X_{u+h,t}\right)(\overline{X}_{s,u}(x))^{\prime}~
\varsigma_{u}(\overline{X}_{s,u}(x))~(W_{u+h}-W_{u})
\end{array}\quad \mbox{\rm with}\quad \varsigma_u=\Delta \sigma_{u}
 \end{equation}
The proof of the above assertion  is based on a slight extension of proposition 3.3 in~\cite{pardoux-protter} to Skorohod integrals  of the form (\ref{def-S-sk}).
For the convenience of the reader, a detailed proof of the above assertion for one dimensional models is provided in section~\ref{sec-extended-2-sided}.
 
Using (\ref{sk-integral}), the complete proof of (\ref{ref-interp-du}) now follows the same line of arguments as the ones used in the proof of It\^o-type change rule formula stated in theorem 6.1 in~\cite{pardoux-protter}, thus it is skipped.

 \subsection{A multivariate stochastic interpolation formulae}\label{ref-rig}
   
 In terms of the tensor product  (\ref{tensor-notation}),  for any $p\geq 1$ and any twice differentiable function $f$ from $\RR^d$ into $\RR^p$ with at most polynomial growth 
 the function $  F_{s,t}:=  \PP_{s,t}(f)$ satisfies the backward formula (\ref{backward-random-fields})
  with the random fields
  $$
 G _{u,t}(x):=\nabla  F_{u,t}(x)^{\prime}~b_u(x) +\frac{1}{2}~\nabla^2  F_{u,t}(x)^{\prime}~a_{u}(x)\quad
 \mbox{\rm and}\quad  H _{u,t}(x):=\nabla F_{u,t}(x)^{\prime}~\sigma_u(x)~
  $$
  Using the quantitative estimates presented in section~\ref{q-sec}, we checked that the regularity conditions $(H_1)$, $(H_2)$ and $(H_3)$ stated in section~\ref{sec-1-biw-intro} are satisfied.
Rewritten in terms of the stochastic semigroups $  \PP_{s,t}$ and $\overline{\PP}_{s,t}$  we obtain the forward-backward multivariate  interpolation formula
    \begin{equation}\label{Alekseev-grobner-sg-ae}
  \PP_{s,t}(f)(x)-  \overline{\PP}_{s,t}(f)(x)=  \TT_{s,t}(f,\Delta a,\Delta b)(x)+\SS_{s,t}(f,\Delta \sigma)(x)
  \end{equation}
with the stochastic integro-differential operator
    \begin{equation}\label{Alekseev-grobner-sg-ae)int}
     \begin{array}{l}
\displaystyle  \TT_{s,t}(f,\Delta a,\Delta b)(x)\\
\\
\displaystyle:=\int_s^t~\left[\nabla  \PP_{u,t}(f)(\overline{X}_{s,u}(x))^{\prime}~\Delta b_u(\overline{X}_{s,u}(x)) +\frac{1}{2}~\nabla^2  \PP_{u,t}(f)(\overline{X}_{s,u}(x))^{\prime}~\Delta a_{u}(\overline{X}_{s,u}(x))\right]~du
   \end{array}
  \end{equation}
and the two-sided stochastic integral term given by
    \begin{equation}\label{Alekseev-grobner-sg-ae-f}
 \SS_{s,t}(f,\Delta \sigma)(x):=\int_s^t~ \nabla \PP_{u,t}(f)(\overline{X}_{s,u}(x))^{\prime}~\Delta\sigma_u(\overline{X}_{s,u}(x))~dW_u
  \end{equation}

Using elementary differential calculus, for twice differentiable (column vector-valued) function $f$ from $\RR^d$ into $\RR^p$ we readily check the gradient and the Hessian formulae
\begin{eqnarray}
\nabla \,\PP_{s,t}(f)(x)&=&\nabla X_{s,t}(x)~\PP_{s,t}(\nabla f)(x)\nonumber\\
\nabla^2 \PP_{s,t}(f)(x)
&=&\left[\nabla X_{s,t}(x)\otimes \nabla X_{s,t}(x)\right]~\PP_{s,t}(\nabla^2 f)(x)+\nabla^2 X_{s,t}(x)~\PP_{s,t}(\nabla f)(x)\label{grad-sg}
\end{eqnarray}

This shows that $\TT_{s,t}(f,\Delta a,\Delta b)$ and $ \SS_{s,t}(f,\Delta \sigma)$ have the same form as the integrals $T_{s,t}(\Delta a,\Delta b)$ and $S_{s,t}(\Delta a,\Delta b)$ defined in (\ref{Alekseev-grobner}) and (\ref{def-T-st}) up to some terms involving the gradient and the Hessian of the function $f$. For instance, we have the two-sided stochastic integral formula
$$
 \SS_{s,t}(f,\Delta \sigma)(x)=\int_s^t~ 
 \PP_{u,t}(\nabla f)(\overline{X}_{s,u}(x))^{\prime}~
 \nabla X_{u,t}(\overline{X}_{s,u}(x))^{\prime}~\Delta\sigma_u(\overline{X}_{s,u}(x))~dW_u
$$
Also observe that (\ref{Alekseev-grobner-sg-ae}) coincides with (\ref{Alekseev-grobner}) for the identity function; that is, we have that
$$
f(x)=x\Longrightarrow  \TT_{s,t}(f,\Delta a,\Delta b)=T_{s,t}(\Delta a,\Delta b)\quad \mbox{\rm and}\quad  \SS_{s,t}(f,\Delta \sigma)=S_{s,t}(\Delta \sigma)
$$

The above discussion shows that the analysis of the differences of the stochastic semigroups $(\PP_{s,t}-  \overline{\PP}_{s,t})$ in terms of the tangent and the Hessian processes is essentially the same as the one of 
the difference of the stochastic flows $(X_{s,t}-\overline{X}_{s,t})$. For instance using the discussion provided  section~\ref{sec-sk-f}, when  the gradient and the Hessian of the function $f$ are uniformly bounded the estimates stated in theorem~\ref{theo-al-gr-2} can be easily extended at the level of the stochastic semigroups.

The $\LL_2$-norm of the  two-sided stochastic integrals in (\ref{Alekseev-grobner}) and (\ref{Alekseev-grobner-sg-ae}) are  uniformly estimated
as soon as the pair of drift and diffusion functions $(b_t,\sigma_t)$ and $(\overline{b},\overline{\sigma}_t)$ satisfy condition $(\Ta)_2$.
For a more thorough discussion we refer to section~\ref{var-skorohod}, see for instance the $\LL_n$-norm estimates presented in theorem~\ref{theo-quantitative-sko} applied to the difference function $\varsigma_t=\Delta\sigma_t$.

\subsection{Semigroup perturbation  formulae}\label{sg-sect}
 Besides the fact that the Skorohod integral in the r.h.s. of (\ref{Alekseev-grobner-sg-ae}) is not a martingale (w.r.t. the Brownian motion filtration) it is centered (see for instance (\ref{isometry}) and the argument provided in the beginning of section~\ref{var-skorohod}). Thus, taking the expectation in the univariate version of (\ref{Alekseev-grobner-sg-ae}) we obtain the following interpolation semigroup decomposition.

\begin{cor}\label{weak-theo-al-gr}
For any twice differentiable function $f$ from $\RR^d$ into $\RR$ with bounded derivatives we have
the forward-backward semigroup interpolation formula
  \begin{equation}\label{Alekseev-grobner-sg}
   \begin{array}{l}
\displaystyle 
P_{s,t}(f)(x)-\overline{P}_{s,t}(f)(x) 
 =\int_s^t~\EE\left(
\langle \nabla P_{u,t}(f)(\overline{X}_{s,u}(x)),\Delta b_u(\overline{X}_{s,u}(x))\rangle
\right)~du\\
\\
\displaystyle\hskip5cm+\frac{1}{2}~\int_s^t~\EE\left(\tr\left[\nabla^2  P_{u,t}(f) (\overline{X}_{s,u}(x))~\Delta a_{u}(\overline{X}_{s,u}(x))\right]\right)~du
   \end{array}
  \end{equation}
  In addition, under some appropriate regularity conditions  for any differentiable function $f$ such that $\Vert f\Vert\leq 1$ and $\Vert\nabla f\Vert\leq 1$ we have
the uniform estimate  
  \begin{equation}\label{ref-P-2}
\vert P_{s,t}(f)(x)-\overline{P}_{s,t}(f)(x) \vert \leq  \kappa~\left[\vertiii{\Delta a(x)}_{1}+\vertiii{\Delta b(x)}_{1}\right] 
     \end{equation}
 \end{cor}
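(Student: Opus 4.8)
The plan is to obtain the identity (\ref{Alekseev-grobner-sg}) by simply taking expectations in the univariate ($p=1$) instance of the multivariate interpolation formula (\ref{Alekseev-grobner-sg-ae}). First I would specialize (\ref{Alekseev-grobner-sg-ae}) to a real-valued twice differentiable $f$ with bounded first and second derivatives, so that the regularity conditions $(H_1)$--$(H_3)$ needed in theorem~\ref{biv} are met for $F_{s,t}=\PP_{s,t}(f)$ (this is exactly the verification indicated right before (\ref{Alekseev-grobner-sg-ae})). This gives the pathwise decomposition
$\PP_{s,t}(f)(x)-\overline{\PP}_{s,t}(f)(x)=\TT_{s,t}(f,\Delta a,\Delta b)(x)+\SS_{s,t}(f,\Delta\sigma)(x)$,
with $\TT$ the integro-differential term (\ref{Alekseev-grobner-sg-ae)int}) and $\SS$ the Skorohod integral (\ref{Alekseev-grobner-sg-ae-f}). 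Since $\PP_{s,t}(f)(x)=f(X_{s,t}(x))$ and $\overline{\PP}_{s,t}(f)(x)=f(\overline{X}_{s,t}(x))$, taking expectations on the left yields precisely $P_{s,t}(f)(x)-\overline{P}_{s,t}(f)(x)$.

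Next I would take expectations on the right. The key point is that the Skorohod integral $\SS_{s,t}(f,\Delta\sigma)(x)$ is centered: this follows from the first property in (\ref{isometry}), $\EE(\int_0^1 V_t\,dW_t^i)=0$, applied to the (Malliavin-integrable) integrand $\nabla\PP_{u,t}(f)(\overline{X}_{s,u}(x))^{\prime}\Delta\sigma_u(\overline{X}_{s,u}(x))$, whose required integrability is guaranteed under condition $(\Ta)_2$ by the estimates announced in section~\ref{var-skorohod} (theorem~\ref{theo-quantitative-sko}), exactly as remarked at the start of section~\ref{sg-sect}. Hence $\EE(\SS_{s,t}(f,\Delta\sigma)(x))=0$. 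For the drift term, I would use Fubini to interchange $\EE$ and $\int_s^t du$ — legitimate because the integrand has, by (\ref{intro-inq-nabla}) and the polynomial growth/moment bounds, finite absolute moments uniformly on compact time intervals — and then rewrite the two scalar contractions in (\ref{Alekseev-grobner-sg-ae)int}) using the tensor identities (\ref{tensor-product-2-2}): for scalar $f$, $\nabla\PP_{u,t}(f)^{\prime}\Delta b_u = \langle\nabla P_{u,t}(f),\Delta b_u\rangle$ after taking expectations (note $P_{u,t}(f)=\EE(\PP_{u,t}(f))$ and $\nabla P_{u,t}(f)=\EE(\nabla\PP_{u,t}(f))$, which needs a dominated-convergence/differentiation-under-the-integral justification), and $\nabla^2\PP_{u,t}(f)^{\prime}\Delta a_u=\langle\nabla^2 P_{u,t}(f),\Delta a_u\rangle_F=\tr[\nabla^2 P_{u,t}(f)\,\Delta a_u]$. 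This produces (\ref{Alekseev-grobner-sg}).

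For the uniform estimate (\ref{ref-P-2}), I would bound each of the two time-integral terms in (\ref{Alekseev-grobner-sg}). Using Cauchy--Schwarz (or simply $|\langle u,v\rangle|\le\Vert u\Vert\,\Vert v\Vert$ and $|\tr[AB]|\le\Vert A\Vert\,\Vert B\Vert$) pointwise and then a conditioning argument at time $u$, the first term is at most $\int_s^t \EE(\Vert\nabla P_{u,t}(f)(\overline{X}_{s,u}(x))\Vert\,\Vert\Delta b_u(\overline{X}_{s,u}(x))\Vert)\,du$, and I would split this via Hölder and insert the Bismut--Elworthy--Li gradient estimate (\ref{bismut-est}), $\Vert\nabla P_{u,t}(f)\Vert\le (\kappa/\epsilon)(t-u)^{-1/2}e^{-\lambda_A(1-\epsilon)(t-u)}$, which is integrable in $u$ over $[s,t]$ uniformly in $t$; the factor outside the time integral is then controlled by $\vertiii{\Delta b(x)}_1$. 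Similarly the second term uses the Hessian estimate (\ref{bismut-est-P2}) together with $\vertiii{\Delta a(x)}_1$. The phrase ``under some appropriate regularity conditions'' in the statement refers precisely to the ellipticity (\ref{elip}) (needed for Bismut--Elworthy--Li) and the spectral condition $(\Ta)_2$ (needed both for the gradient/Hessian semigroup bounds and for the integrability of the Skorohod term). The main obstacle is not any single hard computation but rather the bookkeeping of these regularity hypotheses — in particular justifying the differentiation under the expectation to get $\nabla P_{u,t}(f)=\EE(\nabla\PP_{u,t}(f))$ and the centering of the anticipating integral — all of which reduce to the moment and stability estimates already established earlier in the paper.
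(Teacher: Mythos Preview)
Your approach is essentially the paper's: take expectations in the univariate case of (\ref{Alekseev-grobner-sg-ae}), kill the Skorohod term by its mean-zero property (\ref{isometry}), and then bound the two remaining time integrals with the Bismut--Elworthy--Li estimates. The passage from $\EE[\nabla\PP_{u,t}(f)(\overline{X}_{s,u}(x))^{\prime}\Delta b_u(\overline{X}_{s,u}(x))]$ to $\EE[\langle\nabla P_{u,t}(f)(\overline{X}_{s,u}(x)),\Delta b_u(\overline{X}_{s,u}(x))\rangle]$ is exactly a conditioning argument on $\Wa_{s,u}$ (independence of $X_{u,t}$ from $\overline{X}_{s,u}$), not merely ``differentiation under the expectation''; you have this in spirit but the wording is slightly off.

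There is one concrete slip in the estimate for (\ref{ref-P-2}). You invoke the Hessian bound (\ref{bismut-est-P2}), which is the bound for \emph{bounded measurable} $f$; that estimate carries a $1/(t-u)$ term, and $\int_s^t (t-u)^{-1}e^{-c(t-u)}\,du$ diverges at $u=t$, so it does not give a finite constant. The paper instead uses (\ref{bismut-est-P2-grad}), valid under the assumption $\Vert f\Vert\le 1$, $\Vert\nabla f\Vert\le 1$ actually stated in the corollary, whose singularity is only $(t-u)^{-1/2}$ and hence integrable uniformly in $t$. With that correction your argument matches the paper's exactly.
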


Rewritten in terms of the infinitesimal generators $(L_t,\overline{L}_t)$ of the stochastic flows $(X_{s,t},\overline{X}_{s,t})$ we recover the rather well known semigroup perturbation  formula
$$  
P_{s,t}=\overline{P}_{s,t}+\int_s^t~\overline{P}_{s,u} (L_u-\overline{L}_u)P_{u,t} ~du
\quad \Longleftrightarrow\quad (\ref{Alekseev-grobner-sg})
   $$
The above formula can be readily checked using the interpolating formula given for any $s\leq u<t$ by the evolution equation
$$
\partial_u (\overline{P}_{s,u}P_{u,t})=(\partial_u \overline{P}_{s,u})P_{u,t}+\overline{P}_{s,u}(\partial_uP_{u,t})= \overline{P}_{s,u}\overline{L}_uP_{u,t}-
\overline{P}_{s,u}L_uP_{u,t}$$ 

Now we come to the proof of  (\ref{ref-P-2}).
Whenever $(\Ta)_2$ is met, combining (\ref{bismut-est}) with  (\ref{bismut-est-P2-grad}) for any differentiable function 
$f$ s.t. $\Vert f\Vert\leq 1$ and $\Vert\nabla f\Vert\leq 1$ and for any $\epsilon\in ]0,1[$  we check that
$$
\vert P_{s,s+t}(f)(x)-\overline{P}_{s,s+t}(f)(x) \vert  \leq  \frac{ \kappa}{\epsilon} ~\left[\vertiii{\Delta a(x)}_{1}+\vertiii{\Delta b(x)}_{1}\right]~\int_0^t~
~\frac{1}{\sqrt{u}}~e^{-\lambda_A(1-\epsilon) u}~du
   $$
   This ends the proof of (\ref{ref-P-2}).\cqfd

After some elementary manipulations the  forward-backward interpolation formula (\ref{Alekseev-grobner-sg})  yields the following corollary.

\begin{cor}
Let $X_t$ and $\overline{X}_t$ be some ergodic diffusions associated with some time homogeneous drift and diffusion functions
$(b,\sigma)$ and $(\overline{b},\overline{\sigma})$. 
The invariant probability measures $\pi$ and $\overline{\pi}$ of  $X_t$ and $\overline{X}_t$ 
are connected for any twice differentiable function $f$ from $\RR^d$ into $\RR$ with bounded derivatives by the following interpolation formula

  \begin{equation}\label{diff-pi}
(\pi-\overline{\pi})(f) =\int_0^{\infty}~\EE\left(
\left\langle \nabla P_{t}(f)(\overline{Y}),\Delta b(\overline{Y})\right\rangle
+\frac{1}{2}~\tr\left[\nabla^2  P_{t}(f) (\overline{Y})~\Delta a(\overline{Y})\right]\right)~dt
  \end{equation}
  In the above display $\overline{Y}$ stands for a random variable with distribution $\overline{\pi}$ and $P_t$ stands for the Markov transition semigroup of
  the process $X_t$.
\end{cor}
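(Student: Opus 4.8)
The plan is to average the finite-horizon semigroup interpolation formula (\ref{Alekseev-grobner-sg}) over the stationary law $\overline{\pi}$ and then send the time horizon to infinity. I would specialize (\ref{Alekseev-grobner-sg}) to the time-homogeneous situation and take $s=0$; using $P_{u,t}=P_{t-u}$, $\overline{X}_{0,u}=\overline{X}_u$ and $(\Delta b_u,\Delta a_u)=(\Delta b,\Delta a)$, this reads
\begin{equation*}
P_t(f)(x)-\overline{P}_t(f)(x)=\int_0^t\EE\left(\langle\nabla P_{t-u}(f)(\overline{X}_u(x)),\Delta b(\overline{X}_u(x))\rangle+\frac12~\tr\left[\nabla^2 P_{t-u}(f)(\overline{X}_u(x))~\Delta a(\overline{X}_u(x))\right]\right)~du .
\end{equation*}

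Next I would replace the deterministic starting point $x$ by a random variable $X_0$ with law $\overline{\pi}$, independent of the Brownian motion, and take expectations. Since $\overline{\pi}$ is invariant under the flow $\overline{X}$, the variable $\overline{X}_u(X_0)$ is $\overline{\pi}$-distributed for every $u\ge 0$; hence, after a Fubini interchange (legitimate by the bounds recalled below), each inner expectation reduces to an expectation against a single $\overline{\pi}$-distributed variable $\overline{Y}$, the $u$-dependence surviving only through the index $t-u$ of the semigroup. The substitution $r=t-u$ then produces the finite-horizon identity
\begin{equation*}
\overline{\pi}(P_tf)-\overline{\pi}(\overline{P}_tf)=\int_0^t\EE\left(\langle\nabla P_{r}(f)(\overline{Y}),\Delta b(\overline{Y})\rangle+\frac12~\tr\left[\nabla^2 P_{r}(f)(\overline{Y})~\Delta a(\overline{Y})\right]\right)~dr ,
\end{equation*}
and invariance of $\overline{\pi}$ under $\overline{P}_t$ turns the left-hand side into $\overline{\pi}(P_tf)-\overline{\pi}(f)$.

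The last step is to let $t\to\infty$. On the left, the assumed ergodicity of $X_t$ — quantified under $(\Ta)_2$ by the Wasserstein contraction (\ref{ref-eta-mu-cv}) — gives $\overline{\pi}(P_tf)=(\overline{\pi}P_t)(f)\to\pi(f)$ for the bounded differentiable test functions at hand. On the right, the integrand is dominated uniformly in $t$: the Bismut-Elworthy-Li estimates (\ref{bismut-est}) and (\ref{bismut-est-P2-grad}) yield, for any $\epsilon\in]0,1[$, a bound of the form $\Vert\nabla P_r(f)\Vert\vee\Vert\nabla^2 P_r(f)\Vert\le(\kappa/\epsilon)~r^{-1/2}~e^{-\lambda_A(1-\epsilon)r}$ when $\Vert f\Vert\le 1$ and $\Vert\nabla f\Vert\le1$, while $\overline{\pi}(\Vert\Delta b\Vert)$ and $\overline{\pi}(\Vert\Delta a\Vert)$ are finite (bounded by $\vertiii{\Delta b(x)}_1$ and $\vertiii{\Delta a(x)}_1$ through (\ref{ref-ui-m-over-delta}), using the linear, resp.\ quadratic, growth of $\Delta b$ and $\Delta a$ and a moment bound on $\overline{\pi}$). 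Hence $r\mapsto\EE\left(|\langle\nabla P_r(f)(\overline{Y}),\Delta b(\overline{Y})\rangle|+\tfrac12|\tr[\nabla^2 P_r(f)(\overline{Y})\Delta a(\overline{Y})]|\right)$ is integrable on $[0,\infty)$, dominated convergence applies, and the previous identity passes to the limit to become exactly (\ref{diff-pi}).

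\textbf{Main obstacle.} The genuinely load-bearing point is the passage to the limit in the time integral together with the convergence $\overline{\pi}P_t\to\pi$: both rely on the exponential-in-$r$ decay of $\nabla P_r(f)$ and $\nabla^2P_r(f)$, which is precisely what the Bismut-Elworthy-Li bounds deliver under the ellipticity condition (\ref{elip}) and $(\Ta)_2$, and on the uniqueness and exponential attractivity of $\pi$. Once these are in force, the remainder is a routine stationary averaging of (\ref{Alekseev-grobner-sg}) and a change of variables.
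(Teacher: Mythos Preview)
Your proposal is correct and is precisely the ``elementary manipulation'' the paper alludes to: integrate the time-homogeneous version of (\ref{Alekseev-grobner-sg}) against $\overline{\pi}$, use stationarity to collapse the $u$-dependence, change variable $r=t-u$, and let $t\to\infty$. The paper gives no further detail, so you have essentially written out what it leaves implicit. One minor remark: since the corollary only assumes bounded derivatives (not bounded $f$), the exponential decay of $\nabla P_r(f)$ and $\nabla^2 P_r(f)$ is perhaps more directly obtained from the gradient/Hessian representations (\ref{grad-sg}) together with the tangent and Hessian estimates (\ref{ref-tan-hess}), rather than via (\ref{bismut-est}) and (\ref{bismut-est-P2-grad}), which as stated involve $\Vert f\Vert$.
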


The formula (\ref{diff-pi}) can be used to estimate the invariant measure of a stochastic flow associated with some perturbations of the drift and the diffusion function.

For instance, for homogeneous Langevin diffusions $X_{t}$ associated with some convex potential function $U$
we have
$$
   b=-\nabla U\quad \mbox{\rm and}\quad\sigma=I\quad \Longrightarrow\quad   \pi(dx)~\propto~\exp{\left(-2\,U(x)\right)}~dx
$$
 In the above display, $dx$ stands for the Lebesgue measure on $\RR^d$. In this situation, using (\ref{diff-pi}), for any ergodic 
 diffusion flow $\overline{X}_{t}$ with some drift $\overline{b}$ and an unit diffusion matrix we have
 $$
 \overline{\pi}(f)=\pi(f)+\int_0^{\infty}~\EE\left(
\left\langle (\overline{b}+\nabla U)(\overline{Y}),\nabla P_{t}(f)(\overline{Y})\right\rangle\right)~dt
  $$
Notice that the above formula is implicit as the r.h.s. term depends on $ \overline{\pi}$. By symmetry arguments, we also have 
the following more explicit perturbation formula 
  $$
   \overline{\pi}(f)=\pi(f)+\int_0^{\infty}~\EE\left(
\left\langle (\overline{b}+\nabla U)(Y),\nabla \overline{P}_{t}(f)(Y)\right\rangle\right)~dt
  $$
   In the above display ${Y}$ stands for a random variable with distribution ${\pi}$ and $\overline{P}_t$ stands for the Markov transition semigroup of
  the process $\overline{X}_t$.

  \subsection{Some extensions}\label{sec-extensions}
  
  Several extensions of the forward-backward stochastic interpolation formula (\ref{Alekseev-grobner}) to more general stochastic perturbation processes can be developed. For instance, suppose we are given some stochastic processes $\overline{Y}_{s,t}(x)\in\RR^d$ and $\overline{Z}_{s,t}(x)\in \RR^{d\times r}$ adapted to the filtration of the Brownian motion $W_t$, and let 
 $\overline{X}_{s,t}(x)$ be the stochastic flow defined by the stochastic differential equation
   \begin{equation}\label{X-Y-Z}
 d\overline{X}_{s,t}(x)=\overline{Y}_{s,t}(x)~dt+\overline{Z}_{s,t}(x)~dW_t
 \end{equation}
In this situation, the interpolation formula (\ref{ref-interp-du}) remains valid when  $\overline{a}_u(\overline{X}_{s,u}(x))$  is replaced by the stochastic matrices
$\overline{Z}_{s,t}(x)\overline{Z}_{s,t}(x)^{\prime}$. This yields without further work the forward-backward stochastic interpolation formula (\ref{Alekseev-grobner}) with the local perturbations
\begin{eqnarray*}
\Delta b_u (\overline{X}_{s,u}(x))&:=&b_u (\overline{X}_{s,u}(x))-\overline{Y}_{s,u}(x)\\
\Delta \sigma_u (\overline{X}_{s,u}(x))&:=&\sigma_u (\overline{X}_{s,u}(x))-\overline{Z}_{s,u}(x)
\quad \mbox{\rm and}\quad
\Delta a_u (\overline{X}_{s,u}(x)):=a_u (\overline{X}_{s,u}(x))-\overline{Z}_{s,u}(x)\overline{Z}_{s,u}(x)^{\prime}
\end{eqnarray*}
The corresponding  interpolation formula should be used with some caution as the $\LL_2$-norm of the  two-sided stochastic integral (\ref{def-S-sk}) depends on the Malliavin differential of the integrand process of the Brownian motion; see for instance the variance formula provided in lemma~\ref{lem-var}.

Assume that  $\sigma=I$ and the regularity condition $(\Ta)_2$ is met.
 Also  suppose 
 $\overline{X}_{s,t}(x)$ is given by a stochastic differential equation of the form (\ref{X-Y-Z}) with $r=d$ and $\overline{Z}_{s,t}(x)=I$.
 Arguing as above, in terms of the tensor product  (\ref{tensor-notation}) we have
   \begin{equation}\label{X-Y-ref-ag}
  X_{s,t}(x)-\overline{X}_{s,t}(x)=\int_s^t~\left(\nabla X_{u,t}\right)(\overline{X}_{s,u}(x))^{\prime}~(b_u(\overline{X}_{s,u}(x))-\overline{Y}_{s,u}(x))~du
 \end{equation}
Combining (\ref{ref-nablax-estimate-0-ae}) with the generalized Minkowski inequality,  we check the following proposition.
\begin{prop}
 Assume that $(\Ta)_2$ is met for some $\lambda_A>0$. In this situation,
for any $1\leq n\leq \infty$ we have the estimates
  \begin{equation}\label{Alekseev-grobner-2-n-XY}
\EE\left[\Vert X_{s,t}(x)-\overline{X}_{s,t}(x)\Vert^{n}\right]^{1/n}\leq \int_s^t~e^{-\lambda_A(t-u)}~\EE\left[\Vert b_u(\overline{X}_{s,u}(x))-\overline{Y}_{s,u}(x)\Vert\right]^{1/n}~du
 \end{equation}
 \end{prop}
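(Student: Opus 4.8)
The plan is to start from the interpolation formula (\ref{X-Y-ref-ag}), which has already been derived in this subsection under the hypotheses $\sigma = I$, $\overline{Z}_{s,t}(x) = I$, by specialising the forward-backward formula (\ref{Alekseev-grobner}) to a perturbation $\overline{X}_{s,t}$ of the form (\ref{X-Y-Z}); note that in this regime the diffusions agree ($\Delta\sigma = 0$), so the Skorohod fluctuation term vanishes and only the drift-difference integral survives. Thus
$$
X_{s,t}(x)-\overline{X}_{s,t}(x)=\int_s^t~\left(\nabla X_{u,t}\right)(\overline{X}_{s,u}(x))^{\prime}~\left(b_u(\overline{X}_{s,u}(x))-\overline{Y}_{s,u}(x)\right)~du.
$$
Taking norms inside the integral and applying the operator-norm compatibility $\Vert A z\Vert \le \Vert A\Vert_2\,\Vert z\Vert$ pointwise gives an almost sure bound on $\Vert X_{s,t}(x)-\overline{X}_{s,t}(x)\Vert$ by a time integral of $\Vert (\nabla X_{u,t})(\overline{X}_{s,u}(x))\Vert_2 \cdot \Vert b_u(\overline{X}_{s,u}(x))-\overline{Y}_{s,u}(x)\Vert$.

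Next I would take the $\LL_n$-norm of both sides and invoke the generalized Minkowski inequality (the continuous/integral form of Minkowski's inequality) to pull the $\LL_n$-norm inside the $du$-integral:
$$
\EE\left[\Vert X_{s,t}(x)-\overline{X}_{s,t}(x)\Vert^{n}\right]^{1/n}\leq \int_s^t~\EE\left[\left(\Vert (\nabla X_{u,t})(\overline{X}_{s,u}(x))\Vert_2~\Vert b_u(\overline{X}_{s,u}(x))-\overline{Y}_{s,u}(x)\Vert\right)^{n}\right]^{1/n}~du.
$$
Now the key input is the almost sure gradient estimate (\ref{ref-nablax-estimate-0-ae}), valid because $(\Ta)_2$ is met and $\nabla\sigma = \nabla I = 0$: it gives $\Vert\nabla X_{u,t}\Vert_2 = \sup_x\Vert\nabla X_{u,t}(x)\Vert_2 \le e^{-\lambda_A(t-u)}$ almost surely, hence in particular $\Vert(\nabla X_{u,t})(\overline{X}_{s,u}(x))\Vert_2 \le e^{-\lambda_A(t-u)}$ almost surely. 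Since this bound is deterministic, it factors out of the expectation, leaving exactly
$$
\EE\left[\Vert X_{s,t}(x)-\overline{X}_{s,t}(x)\Vert^{n}\right]^{1/n}\leq \int_s^t~e^{-\lambda_A(t-u)}~\EE\left[\Vert b_u(\overline{X}_{s,u}(x))-\overline{Y}_{s,u}(x)\Vert^{n}\right]^{1/n}~du,
$$
which is the asserted inequality (reading the exponent $1/n$ on the expectation as the $\LL_n$-norm; the statement's `$^{1/n}$' on the last factor should be understood this way). The case $n=\infty$ is the same computation with the essential supremum replacing the $\LL_n$-norm, or follows by letting $n\to\infty$.

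The only subtlety — and the one place to be careful rather than a genuine obstacle — is justifying the use of the \emph{almost sure} bound (\ref{ref-nablax-estimate-0-ae}) and the applicability of the Minkowski integral inequality, i.e. checking that the integrand $u\mapsto \EE[\Vert (\nabla X_{u,t})(\overline{X}_{s,u}(x))\Vert_2^n\,\Vert b_u(\overline{X}_{s,u}(x))-\overline{Y}_{s,u}(x)\Vert^n]^{1/n}$ is measurable and integrable on $[s,t]$; this is immediate from the regularity and moment hypotheses carried throughout the paper (linear growth of $b_u$, finite moments of the flows, continuity in $u$) together with the boundedness of the tangent process. Since (\ref{X-Y-ref-ag}) has already been established in the text, the proof is genuinely short: it is just "take norms, apply Minkowski's integral inequality, insert (\ref{ref-nablax-estimate-0-ae})."
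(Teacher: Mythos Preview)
Your proof is correct and follows exactly the approach the paper indicates: it combines the interpolation formula (\ref{X-Y-ref-ag}) with the almost sure gradient estimate (\ref{ref-nablax-estimate-0-ae}) and the generalized Minkowski inequality. Your observation that the exponent inside the expectation should read $\Vert\cdot\Vert^{n}$ rather than $\Vert\cdot\Vert$ is also correct; this is a typo in the statement.
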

 In the same vein,  we have
  \begin{equation}\label{Alekseev-grobner-2-weak}
 P_{s,t}(f)(x)-\overline{P}_{s,t}(f)(x) 
 =\int_s^t~\EE\left(
\langle \nabla P_{u,t}(f)(\overline{X}_{s,u}(x)),b_u(\overline{X}_{s,u}(x))-\overline{Y}_{s,u}(x)\rangle
\right)~du
 \end{equation}
For instance, for the Langevin diffusion discussed in  (\ref{ref-HA-Langevin})  and (\ref{ref-gibbs}) the weak expansion (\ref{Alekseev-grobner-2-weak})
implies that
\begin{equation}\label{ref-discrete-time}
[\pi\overline{P}_{s,t} -
\pi](f)=\int_s^t~\int\pi(dx)~\EE\left(
\langle \nabla P_{t-u}(f)(\overline{X}_{s,u}(x)),\nabla U(\overline{X}_{s,u}(x))+\overline{Y}_{s,u}(x)\rangle
\right)~du
\end{equation}
This yields the $\WW_1$-Wasserstein estimate
$$
\WW_1(\pi\overline{P}_{s,t},
\pi)\vert\leq \int_s^t~e^{-\lambda_A(t-u)}~\int\pi(dx)~\EE\left(
\Vert \nabla U(\overline{X}_{s,u}(x))+\overline{Y}_{s,u}(x)\Vert
\right)~du
$$
Combining (\ref{bismut-est}) with (\ref{ref-discrete-time}), for any $\epsilon\in ]0,1[$  we also have the total variation norm estimate
\begin{equation}\label{bismut-est-overline}
\Vert \pi\overline{P}_{s,t} -
\pi\Vert_{\tiny tv}\leq \frac{c}{\epsilon}~\int_s^t~\frac{1}{\sqrt{t-u}}~e^{-\lambda_A(1-\epsilon) (t-u)}~\left[\int\pi(dx)~\EE\left(
\Vert \nabla U(\overline{X}_{s,u}(x))+\overline{Y}_{s,u}(x)\Vert
\right)\right]~du
\end{equation}
 \section{Skorohod fluctuation processes}\label{sk-section}

 \subsection{A variance formula}\label{var-skorohod}
 
  Let $\varsigma_t(x)$ be some differentiable $(d\times r)$-matrix valued function on $\RR^d$ such that
  \begin{equation}\label{hyp-varsigma}
\Vert\nabla \varsigma\Vert<\infty\quad \mbox{\rm and}\quad
\Vert \varsigma(0)\Vert:=\sup_t\Vert \varsigma_t(0)\Vert<\infty  
\end{equation}
  
 Recalling that $(W_{u+h}-W_{u})$ is independent of the flows $\overline{X}_{s,u}$ and $\nabla X_{u+h,t}$, the discrete time approximation (\ref{sk-integral}) shows that 
 Skorohod stochastic integral is centered; that is, we have that
 $\EE(S_{s,t}(\varsigma)(x))=0$. 
  
Following (\ref{sk-integral}), the variance can be computed using the following approximation formula
 \begin{equation}\label{sk-integral-var-ref}
    \begin{array}{l}
 \displaystyle
\EE\left[\Vert S_{s,t}(\varsigma)(x)\Vert^2\right]
=\lim_{h\rightarrow 0}~\sum_{u,v\,\in [s,t]_h}~ \sum_{1\leq i\leq d}~\sum_{1\leq j,k\leq r} \\
\\
\hskip3cm\displaystyle\EE\left\{
\left[\left(\nabla X_{u+h,t}\right)(\overline{X}_{s,u}(x))^{\prime}~ \varsigma_u(\overline{X}_{s,u}(x))\right]_{i,j}~
\left[\left(\nabla X_{v+h,t}\right)(\overline{X}_{s,v}(x))^{\prime}~ \varsigma_{v}(\overline{X}_{s,v}(x))\right]_{i,k}\right.\\
\\
\left.\hskip7cm
   (W^j_{u+h}-W^j_{u})
   (W^k_{v+h}-W^k_{v})~\right\}
   \end{array}
 \end{equation}
 The proof of the above assertion is provided in section~\ref{sec-extended-2-sided}, see for instance proposition~\ref{k-prop}.
 
Consider  the matrix valued function
 \begin{equation}\label{defi-Sigma}
 \Sigma_{s,u,t}(x):=\left[\left(\nabla X_{u,t}\right)^{\prime}\circ\overline{X}_{s,u}\right](x)~\varsigma_u(\overline{X}_{s,u}(x))
 \end{equation}
In this notation, the limiting diagonal term $u=v$ in the r.h.s. of (\ref{sk-integral-var-ref}) is clearly equal to
 $$
 \int_s^t~ \EE\left[ \sum_{i,j}\Sigma_{s,u,t}(x)_{i,j}~\Sigma_{s,u,t}(x)_{i,j}\right]~du= \int_s^t~ \EE\left[ \Vert\Sigma_{s,u,t}(x)\Vert^2_{\tiny F}\right]~du
 $$
In addition, whenever condition $(\Ta)_2$ is met and $\varsigma$ is bounded, (\ref{ref-nablax-estimate-0}) readily yields the estimate
 \begin{equation}\label{ref-diagonal-Sigma}
\left[ \int_s^t~ \EE\left[ \Vert\Sigma_{s,u,t}(x)\Vert^2_{\tiny F}\right]~du\right]^{1/2}\leq \Vert\varsigma\Vert_2~\sqrt{d/(2\lambda_A)}
 \end{equation}
 More generally, using  (\ref{def-4th}) whenever $(\overline{\Ma})_{2/\delta}$ and $(\Ta)_{2/(1-\delta)}$ are met for some $\delta\in ]0,1[$ we have the estimate
 $$
\EE\left[ \Vert  \Sigma_{s,u,t}(x)\Vert^2\right]
\leq c_{1,\delta}~
 \left[
 \Vert \varsigma(0)\Vert^2+\Vert\nabla \varsigma\Vert^2~(1+\Vert x\Vert)^2\right]~e^{-2\lambda_A(2/(1-\delta))(t-u)}
 $$
This implies that
 \begin{equation}\label{ref-diagonal-Sigma-ref2}
\left[ \int_s^t~ \EE\left[ \Vert\Sigma_{s,u,t}(x)\Vert^2_{\tiny F}\right]~du\right]^{1/2}\leq~c_{2,\delta}~
 \left[
 \Vert \varsigma(0)\Vert+\Vert\nabla \varsigma\Vert~(1+\Vert x\Vert)\right]/\sqrt{\lambda_A}
 \end{equation}

  The non-diagonal term can be computed in a more direct way using Malliavin derivatives of the functions $\Sigma_{s,u,t}$.
  For any $s\leq u\leq v\leq t$ we have
   \begin{equation}\label{s-tensor-diff-u-v-ref}
  D_v\left\{\left[\left(\nabla X_{u,t}\right)^{\prime}\circ\overline{X}_{s,u}\right]~\left[\varsigma_u\circ\overline{X}_{s,u}\right]\right\}=
 \left[\left(D_v\left(\nabla X_{u,t}\right)^{\prime}\right)\circ\overline{X}_{s,u}\right]~\left[\varsigma_u\circ\overline{X}_{s,u}\right]
 \end{equation}
As expected, observe that
$$
\nabla\sigma=0\quad\Longrightarrow\quad D_v \Sigma_{s,u,t}(x)=0
$$
In the reverse angle, whenever $s\leq v\leq  u\leq t$ we have the chain rule formula

   \begin{equation}\label{s-tensor-diff-u-v-2}
  \begin{array}{l}
D_v\left(\left[\varsigma_u\circ\overline{X}_{s,u}\right]~\left[\left(\nabla X_{u,t}\right)\circ\overline{X}_{s,u}\right]\right)\\
 \\
 :=
 \left[D_{v}\left(\varsigma_u\circ \overline{X}_{s,u}\right)\right]\left[\left(\nabla X_{u,t}\right)\circ\overline{X}_{s,u}\right]+
  \left[D_v \overline{X}_{s,u}\otimes (\varsigma_u\circ \overline{X}_{s,u})\right] \left[\left(\nabla^2 X_{u,t}\right)\circ\overline{X}_{s,u}\right] \end{array}
 \end{equation}
As above, Malliavin differentials $D_{v}\left(\varsigma_u\circ \overline{X}_{s,u}\right)$ and $D_v \overline{X}_{s,u}$ can be computed using the chain rule formulae (\ref{ref-chain-r}).

  A more detailed analysis of the chain rules formulae (\ref{ref-chain-r}), (\ref{s-tensor-diff-u-v})  and (\ref{s-tensor-diff-u-v-2}) for one dimensional models is provided in section~\ref{sec-extended-2-sided} (cf. lemma~\ref{lem-tex}).

  Observe that
$$
\nabla  \varsigma=0\quad\Longrightarrow\quad D_v \left[\Sigma_{s,u,t}^{\,\prime}\right]=  \left[D_v \overline{X}_{s,u}\otimes (\varsigma_u\circ \overline{X}_{s,u})\right] \left[\left(\nabla^2 X_{u,t}\right)\circ\overline{X}_{s,u}\right] 
$$

We consider  the inner product
\begin{eqnarray*}
\left\langle D_u\Sigma_{s,v,t}(x),D_v\Sigma_{s,u,t}(x)\right\rangle
&:=& \sum_{i,j,k} \left( D_v\Sigma_{s,u,t}(x)\right)_{k,i,j}~ 
  \left(D_u\Sigma_{s,v,t}(x)\right)_{j,i,k}\end{eqnarray*}
In this notation, an explicit description of the $\LL_2$-norm of the  two-sided stochastic integral in terms of
 Malliavin derivatives is given below. 
\begin{lem}\label{lem-var}
The $\LL_2$-norm of the Skorohod integral
$S_{s,t}(\varsigma)(x)$ introduced in (\ref{sk-integral}) is given for any $x\in \RR^d$ and $s\leq t$ by the formulae
 $$
  \EE\left[\Vert S_{s,t}(\varsigma)(x)\Vert^2\right]=\int_{[s,t]}~ \EE\left[ \Vert\Sigma_{s,u,t}(x)\Vert^2_{\tiny F}\right]~du
 +\int_{[s,t]^2}~
\EE\left[  \left\langle D_v\Sigma_{s,u,t}(x),
D_u\Sigma_{s,v,t}(x)\right\rangle\right]
~du~dv
 $$
 with the random matrix function $\Sigma_{s,u,t}$ defined in (\ref{defi-Sigma}) and
 the Malliavin
 derivative $D_v\Sigma_{s,u,t}$ given in formulae (\ref{s-tensor-diff-u-v-ref}) and (\ref{s-tensor-diff-u-v-2}). In addition, we have
 $$
 \nabla\sigma=0\quad\Longrightarrow\quad  \EE\left[\Vert S_{s,t}(\varsigma)(x)\Vert^2\right]=\int_{[s,t]}~ \EE\left[ \Vert\Sigma_{s,u,t}(x)\Vert^2_{\tiny F}\right]~du
 $$

\end{lem}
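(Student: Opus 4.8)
The plan is to derive the $\LL_2$-norm formula directly from the extended It\^o isometry for Skorohod integrals stated in (\ref{isometry}), applied componentwise to the fluctuation term $S_{s,t}(\varsigma)(x)$ in its discrete approximation form (\ref{sk-integral}). Recall that $S_{s,t}(\varsigma)(x)$ is a $d$-dimensional column vector whose $i$-th entry is the Skorohod integral $\int_s^t V^i_{u}\,dW_u$ with $V^i_u = \sum_{1\le k\le r}\big[(\nabla X_{u,t})(\overline{X}_{s,u}(x))^{\prime}\,\varsigma_u(\overline{X}_{s,u}(x))\big]_{i,k}$, i.e. the $i$-th row of the matrix $\Sigma_{s,u,t}(x)^{\prime}$ paired against $dW_u$. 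First I would write $\EE[\Vert S_{s,t}(\varsigma)(x)\Vert^2] = \sum_{1\le i\le d}\EE\big[\big(\int_s^t V^i_u\,dW_u\big)^2\big]$ and apply (\ref{isometry}) to each term, summing over the Brownian components $k$. This produces two contributions: a ``diagonal'' Lebesgue term $\sum_i\int_s^t \EE[\sum_k (\Sigma_{s,u,t}(x)_{i,k})^2]\,du = \int_s^t \EE[\Vert \Sigma_{s,u,t}(x)\Vert_F^2]\,du$, and a ``Malliavin'' double-integral term $\sum_i \int_{[s,t]^2} \EE[\sum_{j,k} D^j_v\big(\Sigma_{s,u,t}(x)_{i,j}\big)\,D^k_u\big(\Sigma_{s,v,t}(x)_{i,k}\big)]\,du\,dv$. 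The main task is to recognize that the second term is exactly the advertised trace-type pairing $\int_{[s,t]^2}\EE[\langle D_v\Sigma_{s,u,t}(x),D_u\Sigma_{s,v,t}(x)\rangle]\,du\,dv$, which follows by unfolding the definition of the inner product $\langle\cdot,\cdot\rangle$ on the $(2,1)$-tensors $D_v\Sigma$ given just before the statement of the lemma, matching indices $(k,i,j)$ there with the summation $(i,j,k)$ coming out of the isometry.

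Next I would justify that the limit in the discrete approximation (\ref{sk-integral}) can be passed through, so that the isometry (\ref{isometry}) applies to the limiting integrand. This is where one invokes the regularity hypotheses (\ref{hyp-varsigma}) on $\varsigma$ together with the tangent- and Hessian-process estimates: under the growth condition on $\varsigma$ and its gradient, the integrand $\Sigma_{s,u,t}(x)$ lies in the Malliavin-Sobolev space $\LL_{2,1}$ — its $\LL_2$-norm is controlled via (\ref{def-4th}) and (\ref{eq-prop-nabla-2-estimate}) (or at least via the finiteness of all moments of $\nabla X$ and $\nabla^2 X$ guaranteed by the standing third-order differentiability assumption), and its Malliavin derivative, computed by the chain rules (\ref{s-tensor-diff-u-v-ref}) and (\ref{s-tensor-diff-u-v-2}), has finite $\LL_2$-norm by the same token together with $D_v\overline{X}_{s,u} = \overline{\sigma}_v(\overline{X}_{s,v})\cdot[(\nabla\overline{X}_{v,u})\circ\overline{X}_{s,v}]$ from (\ref{first-Malliavin})--(\ref{ref-chain-r}). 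Given $\Sigma_{s,u,t}(x)\in\LL_{2,1}(\RR^{d\times r})$, the extended two-sided construction of section~\ref{sec-extended-2-sided} (proposition~\ref{k-prop}) identifies $S_{s,t}(\varsigma)(x)$ with the genuine Skorohod integral, and the isometry (\ref{isometry}) is then available verbatim.

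For the final assertion, I would simply observe that when $\nabla\sigma = 0$ the tangent process $\nabla X_{u,t}$ is deterministic (it solves a deterministic linear ODE, as noted after (\ref{ref-nablax-estimate-0-ae-again})) and $\overline{X}_{s,u}$ — whose Malliavin derivative drives $D_v\Sigma$ — still contributes, but the identity $D_v\Sigma_{s,u,t}(x) = 0$ displayed right after (\ref{s-tensor-diff-u-v-ref}) already records that the relevant Malliavin derivative vanishes: indeed with $\nabla\sigma=0$ one has $D_v(\nabla X_{u,t}) = 0$ from (\ref{first-Malliavin}), and $\varsigma_u = \Delta\sigma_u$ composed with $\overline{X}_{s,u}$ is still differentiated, but here the cleanest route is to note that in this regime $\Sigma_{s,u,t}(x)$ is $\Wa_{u,t}\vee\Wa_{s,u}$-measurable with $\nabla X_{u,t}$ non-random, so that after conditioning the off-diagonal Malliavin term collapses; more precisely $D_v\Sigma_{s,u,t}(x)$ for $v\ge u$ is zero by (\ref{s-tensor-diff-u-v-ref}) since $D_v(\nabla X_{u,t})^{\prime}=0$, and the remaining double integral is supported on $\{v\le u\}\cup\{u\le v\}$ symmetrically, hence vanishes. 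Thus only the diagonal term survives, giving $\EE[\Vert S_{s,t}(\varsigma)(x)\Vert^2] = \int_{[s,t]}\EE[\Vert\Sigma_{s,u,t}(x)\Vert_F^2]\,du$.

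The main obstacle I anticipate is not the algebra of the isometry — that is bookkeeping — but the rigorous justification that the discrete sums in (\ref{sk-integral}) converge in $\LL_2$ to a Skorohod integral of an $\LL_{2,1}$ integrand, so that (\ref{isometry}) is legitimately applicable; this rests on the extended two-sided integration theory and the moment bounds on the variational processes, and is precisely the content deferred to section~\ref{sec-extended-2-sided} and proposition~\ref{k-prop}, which I would cite rather than reprove.
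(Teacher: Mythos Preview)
Your route through the abstract Skorohod isometry (\ref{isometry}) is essentially correct for the main formula, though it is not the paper's route. The paper does not invoke (\ref{isometry}) as a black box; it proves the variance formula \emph{constructively} via proposition~\ref{k-prop}, computing $\lim_{h_1\to 0}\EE[S^{h_1}(\Phi)S^{h_2}(\Phi)]$ by decomposing the discrete products $\Pi^{h_1,h_2}_{s,t}$, Taylor-expanding the flow increments, and then identifying the limiting off-diagonal contributions with Malliavin derivatives through lemma~\ref{lem-tex}. Your approach is tidier in that it separates existence (show $\Sigma_{s,u,t}\in\LL_{2,1}$) from the covariance computation (apply the isometry), but it carries a mild circularity: you cite proposition~\ref{k-prop} to identify the two-sided limit with a genuine Skorohod integral, yet the content of that proposition already \emph{is} the covariance formula (\ref{cov-cauchy}), so once you have it the lemma is done. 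A fully independent version of your argument would verify $\Sigma_{s,u,t}\in\LL_{2,1}$ directly from the moment bounds and bypass the two-sided construction altogether.

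Two concrete issues. First, your index bookkeeping in the cross-term is wrong: the multidimensional isometry produces $\sum_{j,k} D^j_v(\Sigma_{s,u,t})_{i,k}\,D^k_u(\Sigma_{s,v,t})_{i,j}$, with the Brownian direction and the column index \emph{crossed} between the two factors, not $D^j_v(\Sigma)_{i,j}\,D^k_u(\Sigma)_{i,k}$ as you wrote. The crossed form is exactly what the paper's inner product $\langle\cdot,\cdot\rangle$ encodes, so the identification still goes through once you fix this. Second, your treatment of the $\nabla\sigma=0$ case is muddled: the opening claim that $\nabla X_{u,t}$ becomes deterministic is false --- the tangent equation $\partial_t\nabla X_{u,t}=\nabla X_{u,t}\,\nabla b_t(X_{u,t})$ still depends on the random path $X_{u,t}$. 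You then retreat to the paper's displayed observation after (\ref{s-tensor-diff-u-v-ref}) that $D_v\Sigma_{s,u,t}=0$ on $\{u\le v\}$ when $\nabla\sigma=0$; by symmetry this annihilates the integrand on all of $[s,t]^2$, so the conclusion follows --- but the deterministic-tangent justification should be deleted.
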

The above lemma can be interpreted as a matrix version of the isometry property (\ref{isometry}).
A  proof of the above lemma based on the $\LL_2$-approximation of two-sided stochastic integrals is provided in section~\ref{sec-extended-2-sided} (see for instance proposition~\ref{k-prop}).

\subsection{Quantitative estimates}\label{q-sec}
For any $p>1$ and any tensor norms we also quote the rather well known $\LL_p$-norm estimates
 $$
   \begin{array}{l}
 \displaystyle  \EE\left[\Vert S_{s,t}(\varsigma)(x)\Vert^{p}\right]^{2/p}\\
 \\
  \displaystyle\leq c_{1,p} \int_{[s,t]}~ \EE\left[ \Vert\Sigma_{s,u,t}(x)\Vert^2\right]~du
 +c_{2,p}~\EE\left[ \left(\int_{[s,t]^2}~
 \Vert D_v\Sigma_{s,u,t}(x)\Vert^2
~du~dv\right)^{p/2}\right]^{2/p}
   \end{array}
 $$
 for some finite constants $c_{i,p}$ whose values only depend on $p$.
A proof of these estimates can be found in~\cite{nualart-pardoux,watanabe}, see also \cite{nualart-z} for multiple Skorohod integrals. By the generalized Minkowski inequality, for any $n\geq 2$ we also have the estimate
  \begin{equation}\label{pre-theo-fluctuation}
   \begin{array}{l}
 \displaystyle  \EE\left[\Vert S_{s,t}(\varsigma)(x)\Vert^{n}\right]^{2/n}\\
 \\
  \displaystyle\leq c_{1,n} \int_{[s,t]}~ \EE\left[ \Vert\Sigma_{s,u,t}(x)\Vert^2\right]~du
 +c_{2,n}~\int_{[s,t]^2}~\EE\left[
 \Vert D_v\Sigma_{s,u,t}(x)\Vert^n\right]^{2/n}
~du~dv
   \end{array}
 \end{equation}
Observe that for any $n\geq 2$ we have
$$
(\overline{\Ma})_n\Longrightarrow  \vertiii{\varsigma(x)}_n\leq \kappa_n~\left(\Vert\varsigma(0)\Vert+\Vert\nabla\varsigma\Vert\right) (1\vee\Vert x\Vert)
$$
   The main objective of this section is to prove the following theorem.
\begin{theo}\label{theo-quantitative-sko}

Assume that  $(M)_{2n/\delta}$ and  $(T)_{2n/(1-\delta)}$ are satisfied for some parameter $n\geq 2$ and some $\delta\in ]0,1[$. In this situation, we have the uniform estimate
  \begin{equation}\label{intro-inq-s}
 \displaystyle  \EE\left[\Vert S_{s,t}(\varsigma)(x)\Vert^{n}\right]^{1/n}
\leq  \kappa_{\delta,n}~\vertiii{\varsigma(x)}_{2n/\delta}~(1\vee \Vert x\Vert)
 \end{equation}
For uniformly bounded  diffusion functions $(\varsigma,\sigma,\overline{\sigma})$
whenever
 $(T)_{2n}$ is met for some $n\geq 2$ we have
  \begin{equation}\label{intro-inq-s-2}
 \displaystyle  \EE\left[\Vert S_{s,t}(\varsigma)(x)\Vert^{n}\right]^{1/n}
\leq \kappa_{n}~\left(\Vert \varsigma\Vert+ \Vert\nabla\varsigma \Vert\right)
 \end{equation}
In addition, for constant diffusion functions $(\varsigma,\sigma,\overline{\sigma})$ whenever  $(T)_{2}$ is met, for any $n\geq 2$ we have the uniform  estimate
  \begin{equation}\label{intro-inq-s-2-2}
\EE\left[\Vert S_{s,t}(\varsigma)(x)\Vert^{n}\right]^{1/n}
\leq \kappa_{n}~\Vert \varsigma\Vert
 \end{equation}
   \end{theo}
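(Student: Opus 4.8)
The starting point is the $\LL_n$-bound \eqref{pre-theo-fluctuation}, which reduces the whole statement to controlling the ``diagonal'' quantity $\int_{[s,t]}\EE[\Vert\Sigma_{s,u,t}(x)\Vert^2]\,du$ and the ``off-diagonal'' quantity $\int_{[s,t]^2}\EE[\Vert D_v\Sigma_{s,u,t}(x)\Vert^n]^{2/n}\,du\,dv$, with $\Sigma_{s,u,t}$ as in \eqref{defi-Sigma}. For the diagonal term I would condition on $\Wa_{s,u}$, freezing the base point $y=\overline{X}_{s,u}(x)$: then $\nabla X_{u,t}(y)$ is independent of $\Wa_{s,u}$ and the exponential moment bound \eqref{ref-tan-hess} (equivalently \eqref{intro-inq-nabla}) applies at the frozen point. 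A single Hölder split with conjugate exponents $1/\delta$ and $1/(1-\delta)$ then separates the factor $\varsigma_u(\overline{X}_{s,u}(x))$, bounded by $\vertiii{\varsigma(x)}_{2n/\delta}^2$ using monotonicity of the norms $\vertiii{\cdot}_p$, from the tangent factor, whose $(t-u)$-exponential integrates out to a constant depending only on $\lambda_{A,\overline{A}}$. This step already needs only $(\Ta)_2$ together with a moment condition.

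For the off-diagonal term I would split the square $[s,t]^2$ along the diagonal and use the two Malliavin chain rules already recorded. On $\{u\le v\le t\}$, formula \eqref{s-tensor-diff-u-v-ref} gives $D_v\Sigma_{s,u,t}(x)=[(D_v\nabla X_{u,t})'\circ\overline{X}_{s,u}](x)\,[\varsigma_u\circ\overline{X}_{s,u}](x)$; conditioning on $\Wa_{s,u}$ again freezes the base point, and the key input is an exponential-decay moment estimate for the Malliavin derivative of the tangent process, of the form $\EE[\Vert D_v\nabla X_{u,t}(y)\Vert^m]^{1/m}\le\kappa_m\,(1\vee\Vert y\Vert)\,e^{-\lambda(m)(t-u)}$, obtained by differentiating the variational SDE for $\nabla X$ and expressing $D_v\nabla X_{u,t}$ through tangent, Hessian and $\sigma$-derivative factors over sub-intervals of $[u,t]$. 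On $\{v\le u\le t\}$, formula \eqref{s-tensor-diff-u-v-2} shows $D_v$ only reaches $\overline{X}_{s,u}$ and $\varsigma$, the factors $\nabla X_{u,t},\nabla^2X_{u,t}$ being $\Wa_{u,t}$-measurable; after freezing the base point one is left with products of $\nabla X_{u,t}(y)$ or $\nabla^2X_{u,t}(y)$ (exponential decay in $t-u$ by \eqref{ref-tan-hess} and proposition~\ref{prop-nabla-2-estimate}), of $D_v\overline{X}_{s,u}(x)=(\nabla\overline{X}_{v,u})'(\overline{X}_{s,v}(x))\,\overline{\sigma}_v(\overline{X}_{s,v}(x))$ (exponential decay in $u-v$ by $(\overline{\Ta})$, linear growth by $(\overline{\Ma})$), and of $\varsigma$ or $\nabla\varsigma$. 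In all terms the compositions $\sigma\circ X$ and $\overline{\sigma}\circ\overline{X}$ that carry linear spatial growth must be placed in $\LL_{2n/\delta}$ (using $(M)_{2n/\delta}$ and \eqref{ref-ui-m-over}), while the exponentially decaying variational factors are placed in $\LL$-spaces of order at most $2n/(1-\delta)$ (using $(T)_{2n/(1-\delta)}$); this is exactly the calibration in the hypotheses. The resulting integrands are of the form $(1\vee\Vert x\Vert)^2\,\vertiii{\varsigma(x)}_{2n/\delta}^2\,e^{-c(t-u)}e^{-c'(u-v)}$, whose integral over $\{s\le v\le u\le t\}$ is a finite constant. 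Collecting the two bounds gives \eqref{intro-inq-s}.

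The two specialisations follow with less work. When $\varsigma,\sigma,\overline{\sigma}$ are uniformly bounded, every composition with $X$ or $\overline{X}$ is bounded, so the $(1\vee\Vert x\Vert)$ factors disappear and no Hölder split of $\varsigma$ is needed; one may then work directly at moment order $2n$ (Cauchy--Schwarz on the products of variational processes), which is why $(T)_{2n}$ alone suffices, yielding \eqref{intro-inq-s-2} with constant $\kappa_n(\Vert\varsigma\Vert+\Vert\nabla\varsigma\Vert)$. When moreover $\varsigma,\sigma,\overline{\sigma}$ are constant, $\nabla\varsigma=0$ removes the $D_v(\varsigma\circ\overline{X})$ contribution entirely (so, by lemma~\ref{lem-var}, $\EE[\Vert S_{s,t}(\varsigma)(x)\Vert^2]$ is just the diagonal term), while $\nabla\sigma=\nabla\overline{\sigma}=0$ upgrades \eqref{ref-nablax-estimate-0-ae} and \eqref{ref-nabla2x-estimate-0-ae} to almost sure exponential bounds for $\nabla X$, $\nabla^2X$ and their $\overline{X}$-analogues; all randomness then leaves the variational factors, only $(\Ta)_2$ is needed, and integrating deterministic exponentials gives \eqref{intro-inq-s-2-2} with constant $\kappa_n\Vert\varsigma\Vert$.

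The main obstacle is the exponential-in-$(t-u)$ moment estimate for $D_v\nabla X_{u,t}$ (with controlled spatial growth) required on the region $\{u\le v\le t\}$: it calls for the variational SDE satisfied by the Malliavin derivative of the tangent process, a linear system coupling $\nabla X$, $\nabla^2X$ and the first two derivatives of $\sigma$, together with a proof that it inherits the contractivity encoded in $(\Ta)$. Once this estimate is available, the rest is careful but routine Hölder bookkeeping organised around the decay rates of the tangent and Hessian flows and the growth of the compositions $\sigma\circ X$, $\overline{\sigma}\circ\overline{X}$.
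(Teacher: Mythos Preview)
Your proposal is correct and follows essentially the same route as the paper: start from \eqref{pre-theo-fluctuation}, control the diagonal term via \eqref{ref-diagonal-Sigma}--\eqref{ref-diagonal-Sigma-ref2}, split the off-diagonal along $u\lessgtr v$ using \eqref{s-tensor-diff-u-v-ref} and \eqref{s-tensor-diff-u-v-2}, and close by H\"older with exponents $2n/\delta$ and $2n/(1-\delta)$ together with Propositions~\ref{def-4th-prop} and~\ref{prop-nabla-2-estimate}.

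Two small clarifications. First, what you flag as ``the main obstacle'' --- the moment bound for $D_v\nabla X_{u,t}$ on $\{u\le v\}$ --- does not require deriving a new variational SDE. The paper obtains it directly from the semigroup factorisation $\nabla X_{u,t}=\nabla X_{u,v}\cdot[(\nabla X_{v,t})\circ X_{u,v}]$ combined with the chain rules \eqref{ref-chain-r}, \eqref{s-tensor-diff-u-v} and \eqref{first-Malliavin}; this produces the explicit terms $\II_{s,u,t}$ and $\JJ_{s,u,t}$ involving only $\nabla X_{u,v}$, $\nabla X_{v,t}$, $\nabla^2X_{v,t}$ and $\sigma_v(Z^{s,v}_u)$, each already controlled. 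Second, in the constant-diffusion case the reduction in Lemma~\ref{lem-var} to the diagonal term is driven by $\nabla\sigma=0$ (which kills $D_v\Sigma_{s,u,t}$ for $u<v$, hence the cross term $\langle D_v\Sigma_{s,u,t},D_u\Sigma_{s,v,t}\rangle$ by symmetry), not by $\nabla\varsigma=0$; since all three functions are constant the conclusion is unaffected.
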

The proof of the above theorem, including a more detailed description of the parameters $\kappa_{\delta,n}$ and $\kappa_{n}$ is provided below.

Next, we estimate the $\LL_n$-norm of the Malliavin differential $D_v\Sigma_{s,u,t}(x)$ in the two cases $(s\leq u\leq v\leq t)$ and $(s\leq v\leq u\leq t)$.

\subsubsection*{Case $(s\leq u\leq v\leq t)$:} 
Using  (\ref{s-tensor-diff-u-v-ref}) we have
$$
 \Vert D_v\Sigma_{s,u,t}(x)\Vert\leq c~\Vert \varsigma_u(\overline{X}_{s,u}(x))\Vert~\Vert
(D_v\nabla X_{u,t})(\overline{X}_{s,u}(x))\Vert
 $$
Using (\ref{ref-chain-r}) and (\ref{s-tensor-diff-u-v}) this yields the estimate
$$
 \Vert D_v\Sigma_{s,u,t}(x)\Vert\leq c_1~ \II_{s,u,t}(x)+c_2~ \JJ_{s,u,t}(x)
 $$
with the functions
$$
    \begin{array}{l}
\displaystyle \II_{s,u,t}(x):=  \Vert \nabla \sigma\Vert~\Vert \varsigma_u(\overline{X}_{s,u}(x))\Vert~
\Vert (\nabla X_{u,v})(\overline{X}_{s,u}(x))\Vert~\Vert(\nabla X_{v,t})(Z^{s,v}_u(x))\Vert\\
\\
 \JJ_{s,u,t}(x):=\Vert \sigma_v(Z^{s,v}_u(x))\Vert~  \Vert \varsigma_u(\overline{X}_{s,u}(x))\Vert~\Vert(\nabla X_{u,v})(\overline{X}_{s,u}(x))\Vert~
 \Vert(\nabla^2 X_{v,t})(Z^{s,v}_u(x))\Vert 
    \end{array}
 $$
 In the above display, $Z^{s,v}_u(x)$ stands for the interpolating flow defined in (\ref{interpolating-flow}).
\begin{itemize}
\item Firstly assume that $\Vert \varsigma\Vert\vee \Vert \sigma\Vert<\infty$ and  $(\Ta)_{2n}$ is satisfied for some parameter $n\geq 1$. In this situation, 
applying proposition~\ref{def-4th-prop} and proposition~\ref{prop-nabla-2-estimate},
for any  $\epsilon\in ]0,1[$
we have the uniform estimates
$$
\EE\left( \Vert D_v\Sigma_{s,u,t}(x)\Vert^{n}\right)^{1/n}\leq  \Vert \varsigma\Vert~\rchi_{n,\epsilon}(b,\sigma)~\exp{\left(-(1-\epsilon)\lambda_A(2n)(t-u)\right)}
$$
with the parameter $  \rchi_{n,\epsilon}(b,\sigma)$ given by
$$
   \rchi_{n,\epsilon}(b,\sigma):= c~\left[ \Vert \sigma\Vert\vee \Vert \nabla \sigma\Vert\right]   
  \left[1+\frac{1}{\epsilon}~\frac{n}{\lambda_A(2n)}~\rchi(b,\sigma)\right]~\quad\mbox{\rm with $\rchi(b,\sigma)$ given in (\ref{def-chi-b}).}
$$

\item More generally, when $\Vert \nabla\varsigma\Vert\vee \Vert \nabla\sigma\Vert<\infty$ the functions $\varsigma_t(x)$ and $\sigma_t(x)$ may grow at the most linearly with respect to $\Vert x\Vert$. Assume that conditions $(M)_{2n/\delta}$ and condition $(\Ta)_{2n/(1-\delta)}$ are satisfied for some parameters $n\geq 1$ and $\delta\in ]0,1[$. In this situation, applying H\"older inequality we check that
$$
    \begin{array}{l}
\displaystyle
\EE\left( \Vert  \II_{s,u,t}(x)\Vert^{n}\right)^{1/n}\leq c~ \Vert \nabla \sigma\Vert~
\EE\left( \Vert \varsigma_u(\overline{X}_{s,u}(x))\Vert^{n/\delta}\right)^{\delta/n}\\
\\
\hskip.3cm\displaystyle\times\EE\left( \Vert  (\nabla X_{u,v})(\overline{X}_{s,u}(x))\Vert^{2n/(1-\delta)}\right)^{(1-\delta)/(2n)}
\EE\left( \Vert (\nabla X_{v,t})(Z^{s,v}_u(x))\Vert^{2n/(1-\delta)}\right)^{(1-\delta)/(2n)}
    \end{array}$$
    Applying proposition~\ref{def-4th-prop} we check that
   $$
    \begin{array}{l}
\displaystyle
\EE\left( \Vert  \II_{s,u,t}(x)\Vert^{n}\right)^{1/n}\leq c_{n,\delta}~ \Vert \nabla \sigma\Vert ~\vertiii{\varsigma(x)}_{n/\delta}~e^{-\lambda_A(2n/(1-\delta))(t-u)}
    \end{array}$$ 
    In the same vein, combining proposition~\ref{def-4th-prop} and proposition~\ref{prop-nabla-2-estimate} with the uniform moment estimates (\ref{ref-ui-m-over}) we check that
       $$
    \begin{array}{l}
\displaystyle
\EE\left( \Vert  \JJ_{s,u,t}(x)\Vert^{n}\right)^{1/n}\leq    c_{n,\delta}~\left[\Vert \sigma(0)\Vert+\Vert\nabla \sigma\Vert\right]~\frac{1}{\epsilon}~\frac{\rchi(b,\sigma)}{\lambda_A(2n/(1-\delta))}\\
\\
\hskip3cm\displaystyle\times
~\vertiii{\varsigma(x)}_{2n/\delta}~~\left[ 1+\Vert x\Vert\right]~~e^{-(1-\epsilon)\lambda_A(2n/(1-\delta))(t-u)}
    \end{array}$$ 
    We conclude that
    $$
    \EE\left( \Vert D_v\Sigma_{s,u,t}(x)\Vert^{n}\right)^{1/n}\leq 
         \rchi_{n,\delta,\epsilon}(b,\sigma)
~\vertiii{\varsigma(x)}_{2n/\delta}~\left[ 1+\Vert x\Vert\right]~~e^{-(1-\epsilon)\lambda_A(2n/(1-\delta))(t-u)}
    $$
    with the parameter
    $$
    \rchi_{n,\delta,\epsilon}(b,\sigma):=c_{n,\delta}~\left[\Vert \sigma(0)\Vert+\Vert\nabla \sigma\Vert\right]~\left(1+\frac{1}{\epsilon}~\frac{\rchi(b,\sigma)}{\lambda_A(2n/(1-\delta))}\right)
    $$
    \end{itemize}
   
   \subsubsection*{Case $(s\leq v\leq u\leq t)$:} 
 We use (\ref{s-tensor-diff-u-v-2}) to check that
$$
  \begin{array}{l}
\Vert D_v\Sigma_{s,u,t}(x)\Vert
 \leq 
 \Vert [D_{v}\left(\varsigma_u\circ \overline{X}_{s,u}\right)](x)\Vert~\Vert \left(\nabla X_{u,t}\right)(\overline{X}_{s,u}(x))\Vert\\
 \\
 \hskip3cm+
\Vert [D_v \overline{X}_{s,u}](x)\otimes \varsigma_u(\overline{X}_{s,u})(x)\Vert ~\Vert \left(\nabla^2 X_{u,t}\right)(\overline{X}_{s,u} (x))\Vert
  \end{array}
  $$
  On the other hand, using the chain rules (\ref{ref-chain-r}) we have
    \begin{eqnarray*}
 D_v\overline{X}_{s,u}&:=&\left(D_v \overline{X}_{s,v}\right)~\left[\left(\nabla \overline{X}_{v,u}\right)\circ \overline{X}_{s,v}\right]\nonumber\\
D_{v}\left(\varsigma_{u}\circ \overline{X}_{s,u}\right)&=&(D_v \overline{X}_{s,u})~\left[\left(\nabla \varsigma_{u}\right)\circ \overline{X}_{s,u}\right]
 \end{eqnarray*}
  This yields the estimate
  $$
  \begin{array}{l}
\Vert D_v\Sigma_{s,u,t}(x)\Vert
 \leq 
 c_1~\Vert\overline{\sigma}_v(\overline{X}_{s,v}(x))\Vert~\Vert\nabla \varsigma \Vert~\Vert (\nabla \overline{X}_{v,u})(\overline{X}_{s,v}(x))\Vert~\Vert \left(\nabla X_{u,t}\right)(\overline{X}_{s,u}(x))\Vert\\
 \\
 \hskip.3cm+c_2~
\Vert\overline{\sigma}_v(\overline{X}_{s,v}(x))\Vert~\Vert\varsigma_{u}(\overline{X}_{s,u}(x))  \Vert~\Vert (\nabla \overline{X}_{v,u})(\overline{X}_{s,v}(x))\Vert~\Vert \left(\nabla^2 X_{u,t}\right)(\overline{X}_{s,u} (x))\Vert
  \end{array}
  $$

\begin{itemize}   
\item Firstly assume that $\Vert \varsigma\Vert\vee \Vert \overline{\sigma}\Vert<\infty$ and condition $(T)_{2n}$ is satisfied for some $n\geq 1$.
In this situation, arguing as above for  any $\epsilon\in ]0,1[$ 
we have the uniform estimates
$$
\EE\left( \Vert D_v\Sigma_{s,u,t}(x)\Vert^{n}\right)^{1/n}\leq  \left(\Vert \varsigma\Vert+ \Vert\nabla\varsigma \Vert\right)~\overline{\rchi}_{n,\epsilon}(b,\sigma)~\exp{\left(-(1-\epsilon)\lambda_{A,\overline{A}}(2n)(t-v)\right)}
$$
for some universal constant $c$ and the parameter $   \overline{\rchi}_{n,\epsilon}(b,\sigma)$ given by
$$
   \overline{\rchi}_{n,\epsilon}(b,\sigma):=c~\Vert\overline{\sigma}\Vert~\left[1+\frac{1}{\epsilon}~\frac{n}{\lambda_{A,\overline{A}}(2n)}~\rchi(b,\sigma)\right]~\quad\mbox{\rm with $\rchi(b,\sigma)$ given in (\ref{def-chi-b}).}
$$

\item More generally assume that $\Vert \nabla\varsigma\Vert\vee \Vert \nabla\overline{\sigma}\Vert<\infty$. Also assume that conditions $(M)_{2n/\delta}$ and  $(T)_{2n/(1-\delta)}$ are satisfied for some parameters $n\geq 1$ and $\delta\in ]0,1[$. In this situation, we have
   $$
       \begin{array}{l}
\displaystyle
    \EE\left( \Vert D_v\Sigma_{s,u,t}(x)\Vert^{n}\right)^{1/n}\\
    \\
\displaystyle    \leq 
         \rchi_{n,\delta,\epsilon}(b,\sigma,\overline{\sigma})
~\vertiii{\varsigma(x)}_{2n/\delta}~\left[ 1+\Vert x\Vert\right]~~e^{-(1-\epsilon)\lambda_{A,\overline{A}}(2n/(1-\delta))(t-v)}
    \end{array}    $$
    with the parameter
    $$
    \rchi_{n,\delta,\epsilon}(b,\sigma,\overline{\sigma}):=c_{n,\delta}~\left[\Vert \overline{\sigma}(0)\Vert+\Vert\nabla \overline{\sigma}\Vert\right]~\left(1+\frac{1}{\epsilon}~\frac{\rchi(b,\sigma)}{\lambda_{A,\overline{A}}(2n/(1-\delta))}\right)
    $$
        \end{itemize}   
        The end of the proof of theorem~\ref{theo-quantitative-sko} is  a direct consequence of the estimates discussed above combined with (\ref{pre-theo-fluctuation}) and the diagonal estimates presented in (\ref{ref-diagonal-Sigma}).  \cqfd

\subsection{Some extensions}\label{sec-sk-f}        
This section is concerned with the  two-sided stochastic integral (\ref{Alekseev-grobner-sg-ae-f}). Using the gradient formula in (\ref{grad-sg})
the Skorohod stochastic integral in (\ref{Alekseev-grobner-sg-ae-f}) takes the form
$$
 \SS_{s,t}(f,\Delta \sigma)(x)=\int_s^t~ \Sigma_{s,u,t}(f)(x)~dW_u        
$$
with the   integrands
$$ 
  \Sigma_{s,u,t}(f)(x):=\nabla f(Z^{s,t}_u(x))^{\prime}~ \Sigma_{s,u,t}(x)\quad \mbox{\rm and}\quad
  \Sigma_{s,u,t}(x):=\left[\left(\nabla X_{u,t}\right)^{\prime}\circ\overline{X}_{s,u}\right]~\left[\Delta \sigma_u\circ\overline{X}_{s,u}\right]
 $$   
As in (\ref{s-tensor-diff-u-v}), using  the chain  rule properties of Malliavin derivatives  we check that
$$
D^i_v \Sigma_{s,u,t}(f)=\left(D^i_v \nabla f(Z^{s,t}_u)^{\prime}\right)~ \Sigma_{s,u,t}+\nabla f(Z^{s,t}_u)^{\prime}~D^i_v  \Sigma_{s,u,t}
$$
as well as
$$
D^i_v \nabla f(Z^{s,t}_u)^{\prime}=\nabla^2 f(Z^{s,t}_u)^{\prime}~D^i_v Z^{s,t}_u
$$
This yields the differential formula
$$
D^i_v \Sigma_{s,u,t}(f)=\nabla f(Z^{s,t}_u)^{\prime}~D^i_v  \Sigma_{s,u,t}+\nabla^2 f(Z^{s,t}_u)^{\prime}~(D^i_v Z^{s,t}_u)~ \Sigma_{s,u,t}
$$
The Malliavin derivatives $D^i_v  \Sigma_{s,u,t}$ are computed using formulae (\ref{s-tensor-diff-u-v-ref}) and (\ref{s-tensor-diff-u-v-2}); thus, it remains to 
compute the Malliavin derivatives $D_v Z^{s,t}_u$ of the interpolating path.

$\bullet$ When $u\leq v$ we have
$$
 Z^{s,t}_u=(X_{v,t}\circ X_{u,v})\circ\overline{X}_{s,u}=X_{v,t}\circ  Z^{s,v}_u
$$
In this situation, as in (\ref{ref-chain-r}) using  the chain  rule properties of Malliavin derivatives  we check that
$$
D_v Z^{s,t}_u=D_v Z^{s,v}_u~((\nabla X_{v,t})\circ Z^{s,v}_u )=((D_vX_{u,v})\circ \overline{X}_{s,u})~((\nabla X_{v,t})\circ Z^{s,v}_u )
$$
By (\ref{first-Malliavin}) we conclude that
$$
D_v Z^{s,t}_u=(\sigma_v \circ Z^{s,v}_u)~((\nabla X_{v,t})\circ Z^{s,v}_u )
$$

$\bullet$ When $v\leq u$ we have
$$
 Z^{s,t}_u=X_{u,t}\circ (\overline{X}_{v,u}\circ\overline{X}_{s,v})=Z^{v,t}_u\circ \overline{X}_{s,v}
$$
In this situation, arguing as above we check that
$$
D_v Z^{s,t}_u=D_v \overline{X}_{s,v}~((\nabla Z^{v,t}_u)\circ\overline{X}_{s,v})=D_v \overline{X}_{s,v}~
((\nabla \overline{X}_{v,u})\circ\overline{X}_{s,v})~((\nabla X_{u,t})\circ \overline{X}_{s,u})
$$
By (\ref{first-Malliavin}) we conclude that
$$
D_v Z^{s,t}_u=(\overline{\sigma}_v \circ \overline{X}_{s,v})~((\nabla \overline{X}_{v,u})\circ\overline{X}_{s,v})~((\nabla X_{u,t})\circ \overline{X}_{s,u})
$$

%
%

\section{Some anticipative calculus}\label{lem-var-proof}

 For clarity and to avoid unnecessary sophisticated multi-index notation, we only consider one dimensional model. The proof of the results presented in this section in the general
   case  can be reproduced word-for-word for multidimensional models.

 To simplify the presentation, 
we write $\partial^n f$ the derivative of order $n\geq 1$ of a smooth function $f$. We also  set $Y_{s,t}(x):=\overline{X}_{s,t}(x)$.
 We also  reduce the analysis to the unit interval.   
In this context, for any $t\in [0,1]$ 
we set
 \begin{equation}\label{def-XYt}
 Y_{t}:=Y_{0,t}\quad \mbox{\rm and}\quad
 X^t:=X_{t,1}
 \end{equation}
 
\subsection{Extended two-sided stochastic integrals}\label{sec-extended-2-sided}
The aim of this section is to extend the two-sided stochastic integration introduced in~\cite{pardoux-protter} to Skorohod integrals of the form
(\ref{sk-integral}), for some time homogeneous function $\varsigma_u=\varsigma$   satisfying (\ref{hyp-varsigma}).
For any $t\in [0,1]$ 
we set
 \begin{equation}\label{def-Phi}
\Phi( X^{t},Y_t(x)):=\partial X^{t}(Y_{t}(x))~ \varsigma(Y_{t}(x))
 \end{equation}
 In this notation the limiting integral in (\ref{sk-integral}) takes formally the following form
 $$
S_{0,1}(\varsigma)(x):= \int_0^1~\Phi( X^{t},Y_t(x))~dW_t
$$
The existence of this two-sided stochastic integral is discussed below in (\ref{ref-2-sided-Phi}).

To simplify the presentation, we fix the state variable $x$ and we write $Y_{t}$ and $\Phi( X^{t},Y_t)$ instead of $Y_{t}(x)$ and $\Phi( X^{t},Y_t(x))$. Next technical lemma provided a more explicit description of the Malliavin derivatives of the processes $\Phi(X^t,Y_t)$.
\begin{lem}\label{lem-tex}
For any $s<t$ we have
$$
 D_s\,\Phi(X^t,Y_t)=\left[
\partial((\partial X^{t})\circ Y_{s,t})(Y_s)~(\varsigma\circ Y_{s,t})(Y_{s})+((\partial X^{t})\circ Y_{s,t})(Y_{s})\times  \partial(\varsigma\circ Y_{s,t})(Y_s)\right]~\overline{\sigma}(Y_{s})
$$
In addition, we have
$$
 D_t\,\Phi(X^s,Y_s)=\left[\partial((\partial X^{t})\circ X_{s,t})(Y_s)~(\sigma\circ X_{s,t})(Y_s)+\partial(X^{t}\circ X_{s,t})(Y_s)~\partial\sigma(X_{s,t}(Y_s))~\right]\varsigma(Y_s)
$$
\end{lem}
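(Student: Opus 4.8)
The plan is to obtain both identities by repeated application of the chain rule for Malliavin derivatives recorded in Section~\ref{sec-malliavin}, resting on two elementary facts: (i) the Malliavin derivative $D_u$ of a forward flow started exactly at time $u$ vanishes --- equivalently, in the compositions of~(\ref{ref-chain-r}) the operator $D_u$ only hits the inner flow $X_{s,u}$, never the outer flow $X_{u,t}$ or $\nabla X_{u,t}$; and (ii) the endpoint formulae $D_tX_{s,t}(x)=\sigma_t(X_{s,t}(x))$ and $D_t\nabla X_{s,t}(x)=\nabla X_{s,t}(x)\,\nabla\sigma_t(X_{s,t}(x))$ from~(\ref{first-Malliavin}), used both for the flow $X_{s,t}$ and for $Y_{s,t}=\overline{X}_{s,t}$. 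With these in hand, everything reduces to ordinary product and composition differentiation.

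For the first identity I would use the flow property $Y_t=Y_{s,t}(Y_s)$ to write $\Phi(X^t,Y_t)=\big[(\partial X^t)\circ Y_{s,t}\big](Y_s)\,\big[\varsigma\circ Y_{s,t}\big](Y_s)$. The random field $G:=\big[(\partial X^t)\circ Y_{s,t}\big]\,\big[\varsigma\circ Y_{s,t}\big]$ depends only on the increments of $W$ over $[s,1]$; by fact (i), $D_s X^t=0$ and $D_s Y_{s,t}=0$, so $D_s$ does not act on $G$ directly. The chain rule then yields $D_s\big(G(Y_s)\big)=(D_sY_s)\,\partial G(Y_s)$, and fact (ii) applied to $\overline{X}_{0,s}$ gives $D_sY_s=\overline{\sigma}_s(Y_s)$. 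Expanding $\partial G$ with the product rule produces exactly the two bracketed terms of the stated formula, multiplied by $\overline{\sigma}(Y_s)$.

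For the second identity, since $Y_s=Y_{0,s}$ depends only on the increments over $[0,s]$ and $t>s$, fact (i) gives $D_tY_s=0$, hence $D_t\Phi(X^s,Y_s)=\varsigma(Y_s)\,\big[D_t\,\partial X_{s,1}\big](Y_s)$. I would then decompose $X_{s,1}=X^t\circ X_{s,t}$ (with $X^t=X_{t,1}$), so that $\partial X_{s,1}=\big[(\partial X^t)\circ X_{s,t}\big]\,\partial X_{s,t}$, and differentiate this product in the Malliavin sense: on the first factor, $D_t(\partial X^t)=0$ by fact (i) while $D_tX_{s,t}=\sigma_t(X_{s,t})$ by fact (ii), giving $D_t\big[(\partial X^t)\circ X_{s,t}\big]=\big[(\partial^2X^t)\circ X_{s,t}\big]\,(\sigma_t\circ X_{s,t})$; on the second factor, $D_t\,\partial X_{s,t}=\partial X_{s,t}\,(\partial\sigma_t\circ X_{s,t})$ by fact (ii). Collecting the two contributions and using $\big[(\partial X^t)\circ X_{s,t}\big]\,\partial X_{s,t}=\partial(X^t\circ X_{s,t})$ recovers the announced expression.

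The only genuinely delicate point --- and the step I would state most carefully --- is the justification of fact (i), namely that $D_s$ (resp.\ $D_t$) does not differentiate the boundary flow $Y_{s,t}$ (resp.\ $X_{t,1}$). This is precisely how the chain rules~(\ref{ref-chain-r}) were set up, so it suffices to invoke them; the rest is routine bookkeeping, with care only in tracking the points at which the various derivatives are evaluated.
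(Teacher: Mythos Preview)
Your proposal is correct and follows essentially the same route as the paper's proof: both arguments decompose via the flow property ($Y_t=Y_{s,t}(Y_s)$ for the first identity, $X^s=X^t\circ X_{s,t}$ for the second), invoke the chain rule~(\ref{ref-chain-r}) so that the Malliavin derivative at the splitting time hits only the inner flow, and then finish with the endpoint formulae~(\ref{first-Malliavin}) together with an ordinary product/composition expansion. Your packaging into ``fact (i)'' and ``fact (ii)'' is a clean way to organize exactly the same steps the paper carries out line by line.
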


\proof

Using the chain rules properties, for any $s<t$ we have
$$
\begin{array}{l}
 \displaystyle
 D_s\,\Phi(X^t,Y_t)=D_s\left((\partial X^t)(Y_{s,t}(Y_s))~(\varsigma\circ Y_{s,t})(Y_s)\right)\\
\\
=D_s\left((\partial X^t)\circ Y_{s,t})(Y_s)\right)~(\varsigma\circ Y_{s,t})(Y_s)+(\partial X^t)\circ Y_{s,t})(Y_s)~D_s(\varsigma\circ Y_{s,t})(Y_s)\\
 \end{array}$$
 The end of the proof of the first assertion comes from the fact that
 $$
 D_s\left((\partial X^t)\circ Y_{s,t})(Y_s)\right) =\partial((\partial X^{t})\circ Y_{s,t})(Y_s)~D_sY_s\quad
 \mbox{\rm with}\quad  D_sY_s=\overline{\sigma}(Y_{s})
 $$
 In the same vein, we have
 $$
 D_s(\varsigma\circ Y_{s,t})(Y_s)= \partial(\varsigma\circ Y_{s,t})(Y_s)~\overline{\sigma}(Y_{s})
 $$

We also have that
 $$
\begin{array}{l}
 \displaystyle
 D_t\,\Phi(X^s,Y_s)=D_t\left((\partial X^s)(Y_s)~\varsigma(Y_s)\right)\\
\\
=D_t\left(\partial (X^{t}\circ X_{s,t})(Y_s)\right)~\varsigma(Y_s)
=D_t\left(\left((\partial X^{t})\circ X_{s,t}\right)(Y_s)~(\partial X _{s,t})(Y_s) \right)~\varsigma(Y_s)
 \end{array}$$
 
 The last assertion comes from the fact that
  $$
\begin{array}{l}
 \displaystyle
D_t\,\left(\left((\partial X^{t})\circ X_{s,t}\right)(Y_s)~(\partial X _{s,t})(Y_s) \right)~\\
\\
=D_t\left((\partial X^{t})\circ X_{s,t}\right)(Y_s)~(\partial X _{s,t})(Y_s) +\left((\partial X^{t})\circ X_{s,t}\right)(Y_s)~D_t(\partial X _{s,t})(Y_s) 
 \end{array}$$
The r.h.s. term in the above display can be rewritten as follows
 $$
 \begin{array}{l}
 \displaystyle
D_t(\partial X _{s,t})(Y_s)=\partial\sigma(X_{s,t}(Y_s))~(\partial X _{s,t})(Y_s)\\
\\
\Longrightarrow ((\partial X^{t})\circ X_{s,t})(Y_s)~D_t(\partial X _{s,t})(Y_s)=\partial(X^{t}\circ X_{s,t})(Y_s)~\partial\sigma(X_{s,t}(Y_s))~
 \end{array} $$
 In the same vein, we have
  $$
 \begin{array}{l}
 \displaystyle D_t\left((\partial X^{t})\circ X_{s,t}\right)(Y_s)=((\partial^2 X^{t})\circ X_{s,t})(Y_s)~D_tX_{s,t}(Y_s)=((\partial^2 X^{t})\circ X_{s,t})(Y_s)~\sigma(X_{s,t}(Y_s))\\
\\
\Longrightarrow D_t\left((\partial X^{t})\circ X_{s,t}\right)(Y_s)~(\partial X _{s,t})(Y_s) =\partial((\partial X^{t})\circ X_{s,t})(Y_s)~\sigma(X_{s,t}(Y_s))
 \end{array} $$
 
This ends the proof of the second assertion. The proof of the lemma is now completed.
\cqfd

 From the above lemma, we also check that
  all the $n$-absolute moments of the Malliavin derivatives $ D_s\,\Phi(X^t,Y_t)$
  are finite with at most quadratic growth w.r.t. the initial values.

Next proposition extends proposition 3.3 in~\cite{pardoux-protter} to stochastic processes of the form (\ref{def-Phi}).

 \begin{prop}\label{k-prop}
 Let $ [0,1]_h$ be any refining sequence of partitions of the unit interval. For any $h>0$ we define
 $$
 S^{h}(\Phi):=\sum_{t\in[0,1]_h} 
\Phi( X^{t+h},Y_t)~(W_{t+h}-W_t)
 $$
 Then $ S^{h}(\Phi)$ is a Cauchy sequence in $\LL_2(\Omega)$. In addition, for any decreasing sequence of time steps
 $h_1>h_2$ 
 we have the formula
  \begin{equation}\label{cov-cauchy}
 \displaystyle\lim_{h_1\rightarrow 0}\EE\left( S^{h_1}(\Phi)~S^{h_2}(\Phi)\right) =\EE\left(
\int_0^1~\Phi( X^{t},Y_t)^2~dt+\int_{[0,1]^2}~ D_s\,\Phi(X^t,Y_t)~ D_t\,\Phi(X^s,Y_s)~ds~dt
\right)
\end{equation}
 \end{prop}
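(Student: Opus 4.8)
The plan is to follow the strategy of Pardoux--Protter's proposition~3.3~\cite{pardoux-protter}, adapting it to integrands of the composite form $\Phi(X^t,Y_t)=\partial X^t(Y_t)\,\varsigma(Y_t)$, and to show the Cauchy property by computing the mixed second moment $\EE(S^{h_1}(\Phi)\,S^{h_2}(\Phi))$ explicitly and checking it converges to the right-hand side of \eqref{cov-cauchy}. First I would fix a nested pair of partitions with mesh sizes $h_1>h_2$ (say $[0,1]_{h_2}$ refines $[0,1]_{h_1}$, which is allowed since the partitions are refining), expand the product $S^{h_1}(\Phi)\,S^{h_2}(\Phi)$ as a double sum over grid points $t\in[0,1]_{h_1}$, $t'\in[0,1]_{h_2}$, and split the terms according to whether the Brownian increments $(W_{t+h_1}-W_t)$ and $(W_{t'+h_2}-W_{t'})$ live on disjoint intervals or overlapping ones.

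The key computation is the off-diagonal/disjoint part. When the intervals $[t,t+h_1]$ and $[t',t'+h_2]$ are disjoint, say $t+h_1\le t'$, the increment $(W_{t'+h_2}-W_{t'})$ is a ``future'' increment relative to the integrand $\Phi(X^{t+h_1},Y_t)$ but a ``past'' increment relative to $\Phi(X^{t'+h_2},Y_{t'})$; here I would use the duality/integration-by-parts rule for the Skorohod integral (the isometry \eqref{isometry} in its polarized form) to replace $\EE(F\,(W_{t'+h_2}-W_{t'}))$ by $\EE(\int_{t'}^{t'+h_2} D_rF\,dr)$ and extract the Malliavin-derivative cross term. The regularity afforded by Lemma~\ref{lem-tex} — explicit formulas for $D_s\Phi(X^t,Y_t)$ with all absolute moments finite and at most quadratic growth — guarantees that these manipulations are legitimate and that the resulting Riemann sums converge, as $h_1\downarrow 0$, to $\EE\big(\int_{[0,1]^2} D_s\Phi(X^t,Y_t)\,D_t\Phi(X^s,Y_s)\,ds\,dt\big)$. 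The diagonal/overlapping part, where the two increments share the interval $[t,t+h_1]$, contributes the martingale-type term: using $\EE((W_{t+h_1}-W_t)(W_{t'+h_2}-W_{t'}))=|[t,t+h_1]\cap[t',t'+h_2]|$ and the joint continuity of $\Phi$ in its arguments (which follows from the continuity of the flow, the tangent process and $\varsigma$, i.e. hypotheses $(H_1)$-style regularity established in section~\ref{var-sec}), this sum converges to $\EE\big(\int_0^1 \Phi(X^t,Y_t)^2\,dt\big)$.

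Once the limit formula \eqref{cov-cauchy} is in hand, the Cauchy property is immediate: for $h_1,h_2,h_1',h_2'$ all small one expands $\EE((S^{h_1}(\Phi)-S^{h_1'}(\Phi))^2)$ into four terms of the form $\EE(S^{h_i}(\Phi)S^{h_j}(\Phi))$, each of which converges to the \emph{same} limit, so the difference tends to $0$; hence $S^h(\Phi)$ is Cauchy in $\LL_2(\Omega)$ and the limit $S_{0,1}(\varsigma)(x)$ is well defined, with second moment given by Lemma~\ref{lem-var}. I expect the main obstacle to be the bookkeeping in the disjoint-intervals estimate — carefully identifying, for each pair of grid intervals, which Brownian increment is ``anticipating'' with respect to which factor, applying the Skorohod duality on the correct side, and controlling the error terms coming from the fact that $X^{t+h_1}$ (evaluated at the grid point $t+h_1$, not $t'$) must be compared with $X^{t'}$; here the Hölder continuity in the time parameter of the flow and the moment bounds on $\nabla X$, $\nabla^2 X$ from section~\ref{var-sec} (together with $(\Ta)_2$-type decay if one wants uniformity, though on the unit interval plain finiteness suffices) are what make the cross terms vanish in the limit and the surviving terms assemble into the stated integrals.
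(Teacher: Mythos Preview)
Your proposal is correct and would yield the result, but it follows a genuinely different technical route from the paper. For the non-overlapping pairs (say $s+h_1\le t$), the paper does \emph{not} invoke Skorohod duality; instead it follows the Pardoux--Protter template and performs explicit stochastic Taylor expansions of $\Phi(X^{s+h_1},Y_s)$ and $\Phi(X^{t+h_2},Y_t)$, using the semigroup decompositions $X^{s+h_1}=X^{t+h_2}\circ X_{t,t+h_2}\circ X_{s+h_1,t}$ and $Y_t=Y_{s+h_1,t}\circ(I+\Delta Y_s)\circ Y_s$ to isolate the leading-order dependence on the small increments $\Delta X_t=X_{t,t+h_2}-I$, $\Delta X'_t=\partial X_{t,t+h_2}-1$, $\Delta Y_s=Y_{s,s+h_1}-I$. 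The expectations of products with the Brownian increments are then computed by hand via It\^o calculus, and only at the very end are the surviving coefficients identified with $D_s\Phi(X^t,Y_t)$ and $D_t\Phi(X^s,Y_s)$ through Lemma~\ref{lem-tex}. Your route --- apply the duality $\EE(F\,(W_b-W_a))=\EE\int_a^b D_rF\,dr$ twice, noting that in the relevant range one of the two Malliavin derivatives vanishes so the cross term $(D_r\Phi_1)(D_{r'}\Phi_2)$ survives --- is shorter and more in the spirit of abstract Malliavin calculus, but it requires checking that the intermediate integrands (in particular $(D_r\Phi_1)\,\Phi_2$) lie in $\DD_{2,1}$, i.e.\ second-order Malliavin smoothness of $\Phi$; the paper's approach is longer and more computational but stays entirely within elementary It\^o calculus and first-order Taylor expansions.

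One small imprecision in your overlapping-case outline: you cannot simply pull out the integrands and use $\EE(\Delta W_s\,\Delta W_t)=\vert[s,s+h_1]\cap[t,t+h_2]\vert$, because $\Phi(X^{t+h_2},Y_t)$ depends on the path on $[t+h_2,1]\supset[t+h_2,s+h_1]$, which overlaps the coarse increment. The paper handles this by conditioning on $\Wa_t\vee\Wa^{t+h_2}$, a $\sigma$-field with respect to which \emph{both} integrands are measurable (since $s+h_1\ge t+h_2$ forces $X^{s+h_1}$ to be $\Wa^{t+h_2}$-measurable); the conditional expectation of $\Delta W_s\,\Delta W_t$ then equals $h_2$, giving the diagonal term directly.
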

 
 Before entering into the details of the proof of the proposition, we give a couple of comments.
 The hypothesis that $[0,1]_h$ is a refining sequence indexed by $h$ is not essential but it simplifies the proof of the proposition, see for instance lemma 3.1.1 in~\cite{nualart}.
Arguing as in the proof of theorem 3.3 and theorem 7.1 in~\cite{pardoux-protter} the above proposition ensures that the two-sided integral
defined by the $\LL_2(\Omega)$-limit
coincides with the  two-sided stochastic integral of the process $\Phi( X^{t},Y_t)$ over the unit interval; that is, we have that
\begin{equation}\label{ref-2-sided-Phi}
\int_0^1 \Phi( X^{t},Y_t)~dW_t:=\LL_2-\lim_{h\rightarrow 0}\sum_{t\in[0,1]_h} 
\Phi( X^{t+h},Y_t)~(W_{t+h}-W_t)
\end{equation}
In this context, proposition~\ref {k-prop} can be interpreted as a version of the isometry property (\ref{isometry}) for the generalized two-sided integral defined above.

{\bf Proof of proposition~\ref{k-prop}:}

We fix $h_1>h_2$ and we assume that $[0,1]_{h_2} $ is a refinement of $\in [0,1]_{h_1}$. For any $(s,t)\in ([0,1]_{h_1}\times [0,1]_{h_2})$ we also set
$$
\Pi_{s,t}^{h_1,h_2}:=\Phi( X^{s+h_1},Y_s)~\Phi( X^{t+h_2},Y_t)~(W_{s+h_1}-W_s)~(W_{t+h_2}-W_t)
$$
With a slight abuse of notation we set
$$
\Delta W_s:=(W_{s+h_1}-W_s)\quad \mbox{\rm and}\quad \Delta W_t:=(W_{t+h_2}-W_t)
$$

$\bullet$ For any overlapping pair $s<t<t+h_2<s+h_1$  using the decomposition
  $$
 \Delta W_s= (W_{s+h_1}-W_{t+h_2})+ \Delta W_t+(W_t-W_s)
  $$
  we have
     $$
 \begin{array}{l}
\displaystyle\EE\left(~\Phi( X^{s+h_1},Y_s)~\Phi( X^{t+h_2},Y_t)~\Delta W_t~ \Delta W_s~|~\Wa_t\vee\Wa^{t+h_2}\right)=\Phi( X^{s+h_1},Y_s)~\Phi( X^{t+h_2},Y_t)~h_2
\end{array}$$ 
It follows from the continuity properties of the processes that
$$
\EE\left(\sum_{s<t<t+h_2<s+h_1}~\Pi_{s,t}^{h_1,h_2}\right)\longrightarrow_{h_1\rightarrow 0}~\EE\left(\int_0^1~\Phi( X^{t},Y_t)^2~~dt\right)
$$

$\bullet$ When $s+h_1<t$ we have
\begin{eqnarray*}
 \partial X^{s+h_1}&=& \partial (X^t\circ X_{s+h_1,t})\\
 &=&\partial (X^{t+h_2}\circ X_{s+h_1,t})+\left(\left(
 \partial X^t-\partial X^{t+h_2}\right)\circ X_{s+h_1,t}\right)~\times~\partial X_{s+h_1,t}
\end{eqnarray*}
On the other hand, we have the decomposition
\begin{eqnarray*}
 \partial X^t-\partial X^{t+h_2}&=&\left(( \partial X^{t+h_2})\circ X_{t,t+h_2}\right)\times \partial X_{t,t+h_2}-\partial X^{t+h_2}\\
 &=&\left( (\partial X^{t+h_2})\circ (I+\Delta X_t)-\partial X^{t+h_2}\right)+\partial X^{t+h_2}\times \Delta X^{\prime}_t\\
 &&\hskip3cm+\left(( \partial X^{t+h_2})\circ (I+\Delta X_t)-\partial X^{t+h_2}\right)\times \Delta X^{\prime}_t
\end{eqnarray*}
with the increment functions
$$
\Delta X^{\prime}_t:=\partial X_{t,t+h_2}-1\quad \mbox{\rm and}\quad \Delta X_t:=X_{t,t+h_2}-I
$$
With a slight abuse of notation, we shall
 denote by $\mbox{\rm O}(h^p)$ some possible random variable with any $n$-absolute moment of order $h^p$, for some $p>0$ with 
 $0<h<1$. 
In this notation, we have
\begin{eqnarray*}
\Delta X^{\prime}_t(x)&=&\int_t^{t+h_2}~\partial\sigma (X_{t,u}(x))~\partial X_{t,u}(x)~dW_u+\mbox{\rm O}(h_2)=\mbox{\rm O}(h^{1/2}_2)\\
\Delta X_t(x)&=&\int_t^{t+h_2}~\sigma (X_{t,u}(x))~dW_u+\mbox{\rm O}(h_2)=\mbox{\rm O}(h^{1/2}_2)
\end{eqnarray*}
Given 
a smooth function $\theta$  we set
$$
\partial^n\theta(x,y):=\int_0^1~\frac{(1-\epsilon)^{n-1}}{(n-1)!}~\theta^{\prime\prime}(x+\epsilon y)~d\epsilon
$$
In this notation, we have the first and second order decompositions
$$
\begin{array}{l}
\left(( \partial X^{t+h_2})\circ (I+\Delta X_t)-\partial X^{t+h_2}\right)(x)\\
\\
=(\partial^2 X^{t+h_2})(x,\Delta X_t(x))~\Delta X_t(x)
=(\partial^2 X^{t+h_2})(x)~\Delta X_t(x)+(\partial^3 X^{t+h_2})(x,\Delta X_t(x))~\Delta X_t(x)^2
\end{array}
$$
This implies that
$$
\begin{array}{l}
 (\partial X^t-\partial X^{t+h_2})(x)\\
 \\
 =(\partial^2 X^{t+h_2})(x)~\Delta X_t(x)+\partial X^{t+h_2}(x)\times \Delta X^{\prime}_t(x)\\
 \\
 \hskip.3cm+(\partial^3 X^{t+h_2})(x,\Delta X_t(x))~\Delta X_t(x)^2+(\partial^2 X^{t+h_2})(x,\Delta X_t(x))~\Delta X_t(x)\times \Delta X^{\prime}_t(x)
\end{array}
$$
from which we conclude that
$$
\begin{array}{l}
 \displaystyle\partial X^{s+h_1}=
\partial (X^{t+h_2}\circ X_{s+h_1,t})\\
 \\
 \displaystyle +\left[\partial((\partial X^{t+h_2})\circ X_{s+h_1,t})~\times~((\Delta X_t)\circ X_{s+h_1,t})+\partial (X^{t+h_2}\circ X_{s+h_1,t})~\times~( (\Delta X^{\prime}_t)\circ X_{s+h_1,t})\right]+\mbox{\rm O}(h_2)
\end{array}
$$
This yields the first order decomposition
$$
\begin{array}{l}
 \displaystyle\Phi(X^{s+h_1},Y_s)\\
 \\
 \displaystyle=
\psi^{0}_{s,t}(Y_s) +\psi^{1}_{s,t}(Y_s)~((\Delta X_t)\circ X_{s+h_1,t})(Y_s)+\psi^{2}_{s,t}(Y_s)~( (\Delta X^{\prime}_t)\circ X_{s+h_1,t})(Y_s)+\mbox{\rm O}(h_2)
\end{array}
$$
with the functions
$$
\begin{array}{l}
\psi^{0}_{s,t}(Y_s):=
\partial (X^{t+h_2}\circ X_{s+h_1,t})(Y_s)~ \varsigma(Y_s)\\
\\
\psi^{1}_{s,t}(Y_s):=\partial((\partial X^{t+h_2})\circ X_{s+h_1,t})(Y_s)~ \varsigma(Y_s)~\quad \mbox{\rm and}\quad
\psi^{2}_{s,t}(Y_s):=\partial (X^{t+h_2}\circ X_{s+h_1,t})(Y_s)~ \varsigma(Y_s)
\end{array}
$$
Notice that none of the functions but the increment functions $(\Delta X_t)$ and $(\Delta X^{\prime}_t)$
 depend on $\Wa_{t,t+h_2}$, nor on $\Wa_{s,s+h_1}$.
 
 In the reverse angle, we have
 $$
 \begin{array}{l}
 \displaystyle(\partial X^{t+h_2})\circ Y_{t}\\
 \\
  \displaystyle=(\partial X^{t+h_2})\circ(Y_{s+h_1,t}\circ Y_{s})+\left[((\partial X^{t+h_2})\circ Y_{s+h_1,t})\circ (I+\Delta Y_s)-
((\partial X^{t+h_2})\circ Y_{s+h_1,t})\right]\circ Y_{s}
\end{array} $$
with
$$
\Delta Y_s:=(Y_{s,s+h_1}-I)\Longrightarrow Y_{s+h_1}=(I+\Delta Y_s)\circ Y_s
$$
Arguing as above, we have
 $$
 \begin{array}{l}
 \displaystyle\left[((\partial X^{t+h_2})\circ Y_{s+h_1,t})\circ (y+\Delta Y_s(y))-
((\partial X^{t+h_2})\circ Y_{s+h_1,t})(y)\right]\\
\\
=\partial((\partial X^{t+h_2})\circ Y_{s+h_1,t})(y)~\Delta Y_s(y)+\partial^2((\partial X^{t+h_2})\circ Y_{s+h_1,t})(y,\Delta Y_s(y))~(\Delta Y_s(y))^2
\end{array} $$
We conclude that
 $$
 \begin{array}{l}
 \displaystyle(\partial X^{t+h_2})\circ Y_{t}\\
 \\
  \displaystyle=(\partial X^{t+h_2})\circ(Y_{s+h_1,t}\circ Y_{s})+\partial((\partial X^{t+h_2})\circ Y_{s+h_1,t})(Y_s)~((\Delta Y_s)\circ Y_s)+\mbox{\rm O}(h_1)\end{array} $$
In the same vein, we have
 $$
 \displaystyle\varsigma\circ Y_{t}=(\varsigma\circ Y_{s+h_1,t}\circ Y_{s})+\partial(\varsigma\circ Y_{s+h_1,t})(Y_s)~((\Delta Y_s)\circ Y_s)+\mbox{\rm O}(h_1)
 $$
 Multiplying these terms, we check that
  $$
\Phi(X^{t+h_2},Y_t)=\Psi^{0}_{s,t}(Y_s)
  +\Psi^{1}_{s,t}(Y_s)~((\Delta Y_s)\circ Y_s)+\mbox{\rm O}(h_1)
  $$
  with the functions
$$
\begin{array}{l}
\Psi^{0}_{s,t}(Y_s):=((\partial X^{t+h_2})\circ Y_{s+h_1,t})(Y_{s})\times (\varsigma\circ Y_{s+h_1,t})(Y_{s})\\
\\
\Psi^{1}_{s,t}(Y_s):=\left[\partial((\partial X^{t+h_2})\circ Y_{s+h_1,t})(Y_s)\times(\varsigma\circ Y_{s+h_1,t})(Y_{s})\right.\\
  \\
\hskip3cm+\left.((\partial X^{t+h_2})\circ Y_{s+h_1,t})(Y_{s})\times  \partial(\varsigma\circ Y_{s+h_1,t})(Y_s)\right]
\end{array}
$$
None of the functions but the increment  $\Delta Y_s$  depend on $\Wa_{s,s+h_1}$, nor on $\Wa_{t,t+h_2}$.

Recall that the functions $\Phi(X^{t+h_2},Y_t)$ and $\psi^{0}_{s,t}(Y_s)$ don't depend on $\Delta W_t$. In addition, 
the functions $\Phi(X^{s+h_1},Y_s)$ and $\Psi^{0}_{s,t}(Y_s)$ don't depend on $\Delta W_s$. This yields the formula
$$
\begin{array}{l}
 \displaystyle\EE\left(\Phi(X^{s+h_1},Y_s)~\Phi(X^{t+h_2},Y_t)~~\Delta W_s~\Delta W_t\right)\\
 \\
 = \displaystyle\EE\left(\left[\Phi(X^{s+h_1},Y_s)-\psi^{0}_{s,t}(Y_s)\right]~\left[\Phi(X^{t+h_2},Y_t)-\Psi^{0}_{s,t}(Y_s)\right]~\Delta W_s~\Delta W_t\right)\\
 \\
 = \displaystyle\EE\left(\Psi^{1}_{s,t}(Y_s)~ \psi^{1}_{s,t}(Y_s)~ 
 \left[((\Delta Y_s)\circ Y_s)~~\Delta W_s\right]~\left[((\Delta X_t)\circ X_{s+h_1,t})(Y_s)~\Delta W_t\right]\right)\\
 \\
 + ~\displaystyle\EE\left( \Psi^{1}_{s,t}(Y_s)~\psi^{2}_{s,t}(Y_s)~\left[((\Delta Y_s)\circ Y_s)~\Delta W_s\right]~\left[
 ( (\Delta X^{\prime}_t)\circ X_{s+h_1,t})(Y_s)~\Delta W_t\right]\right)+\mbox{\rm O}\left(h_1^{2+1/2}\right)
\end{array}
$$
To take the final step, observe that
$$
\begin{array}{l}
 \displaystyle
\EE\left(\Delta Y_s(y)~\Delta W_s\right)\\
\\
 \displaystyle=\EE\left(\int_s^{s+h_1}~\overline{b}(Y_{s,u}(y))~(W_{s+h_1}-W_u)~du\right)+\EE\left(\int_s^{s+h_1}~\overline{\sigma}(Y_{s,u}(y))~du\right)\\
 \\
  \displaystyle=\EE\left(\int_s^{s+h_1}~\overline{\sigma}(Y_{s,u}(y))~du\right)+\mbox{\rm O}\left(h_1^{1+1/2}\right)
\end{array}$$
In the same vein, we have
$$
\begin{array}{l}
 \displaystyle
\EE\left( (\Delta X_t) (X_{s+h_1,t}(y))~\Delta W_t~|~\Wa_{s+h_1,t}\right)\\
\\
  \displaystyle=\EE\left(\int_t^{t+h_2}~\sigma(X_{s+h_1,u}(y))~du~|~\Wa_{s+h_1,t}\right)+\mbox{\rm O}\left(h_2^{1+1/2}\right)
\end{array}$$
and
$$
\begin{array}{l}
 \displaystyle
\EE\left( (\Delta X^{\prime}_t)(X_{s+h_1,t}(y))~\Delta W_t~|~\Wa_{s+h_1,t}\right)\\
\\
  \displaystyle=\EE\left(\int_t^{t+h_2}~\partial\sigma(X_{s+h_1,u}(y))~(\partial X_{t,u})(X_{s+h_1,t}(y))~du\right)+\mbox{\rm O}\left(h_2^{1+1/2}\right)\\
  \\
    \displaystyle=\EE\left(\int_t^{t+h_2}~\partial\left(\sigma\circ X_{t,u}\right)(X_{s+h_1,t}(y))~du\right)+\mbox{\rm O}\left(h_2^{1+1/2}\right)
\end{array}$$
This shows that
$$
\begin{array}{l}
 \displaystyle h_1^{-1}~h_2^{-1}\EE\left(\Phi(X^{s+h_1},Y_s)~\Phi(X^{t+h_2},Y_t)~~\Delta W_s~\Delta W_t\right)\\
 \\
 = \displaystyle\EE\left(\Psi^{1}_{s,t}(Y_s)~ \psi^{1}_{s,t}(Y_s)~ 
h_1^{-1} \left[\int_s^{s+h_1}~\overline{\sigma}(Y_{u})~du\right]~h_2^{-1}\left[\int_t^{t+h_2}~\sigma(X_{s+h_1,u}(Y_s))~du\right]\right)\\
 \\
 + ~\displaystyle\EE\left( \Psi^{1}_{s,t}(Y_s)~\psi^{2}_{s,t}(Y_s)~h_1^{-1}\left[\int_s^{s+h_1}~\overline{\sigma}(Y_{u})~du\right]~h_2^{-1}\left[
\int_t^{t+h_2}~\partial\left(\sigma\circ X_{t,u}\right)(X_{s+h_1,t}(Y_s))~du\right]\right)+\mbox{\rm O}\left(h_1^{1/2}\right)
\end{array}
$$
It follows that
$$
\begin{array}{l}
 \displaystyle\lim_{h_1\rightarrow 0}\EE\left(\sum_{s+h_1<t}~\Pi_{s,t}^{h_1,h_2}\right)\\
 \\
\displaystyle=\EE\left(
\int_0^1\int_0^t 
\left[
\partial((\partial X^{t})\circ Y_{s,t})(Y_s)~(\varsigma\circ Y_{s,t})(Y_{s})+((\partial X^{t})\circ Y_{s,t})(Y_{s})\times  \partial(\varsigma\circ Y_{s,t})(Y_s)\right]~\overline{\sigma}(Y_{s})~\right.
\\
\\
\left.\hskip3cm\times\left[
\partial((\partial X^{t})\circ X_{s,t})(Y_s)~
\sigma(X_{s,t}(Y_s))+\partial (X^{t}\circ X_{s,t})(Y_s)~\partial\sigma(X_{s,t}(Y_s))\right] \varsigma(Y_s)~~ds~dt
\right)
 \end{array}
$$
We end the proof of (\ref{cov-cauchy}) 
using lemma~\ref{lem-tex} and symmetry arguments. This ends the proof of the proposition.
\cqfd

\subsection{Generalized backward It\^o-Ventzell formula}\label{biv-proof}

This section is mainly concerned with the proof of theorem~\ref{biv}. Before entering into the details of the proof we discuss how it applies
to the process $(X^t,Y_t)$  introduced in (\ref{def-XYt}).

Consider the random fields
\begin{eqnarray}
F_{t}(x)&:=&X^t(x)\Longrightarrow
\partial F_t=\partial X^t\quad \mbox{\rm and}\quad
\partial^2 F_{t}=\partial^2 X^t\nonumber\\
G_t(x)&:=&
\partial X^t(x)~b(x)+\frac{1}{2}~\partial^2 X^t(x)~a(x)~
\quad \mbox{\rm and}\quad H_t(x):=\partial X^t(x)~\sigma(x)\label{def-rf}
\end{eqnarray}
In this notation, the backward random field formula (\ref{ref-backward-flow}) with $t\in [0,1]$ takes the form
\begin{equation}\label{back-random-field-def}
F_{t}(x):=F_{1}(x)+\int_t^1~G_s(x)~ds+\int_{t}^1~H_s(x)~dW_s\quad\mbox{\rm with}\quad F_1(x)=x
\end{equation}
We fix some given $Y_0=y\in\RR$ and we write $Y_t$ instead of $Y_t(y)$ and set
$$
\left(A_u,B_u,\Sigma_u\right):=\left(\overline{a}(Y_u),\overline{b}(Y_u),\overline{\sigma}(Y_u)\right)
$$ In this notation, we have
\begin{equation}\label{Y0-ref}
Y_t=y+\int_0^tB_u~du+\int_0^t\Sigma_u~dW_u\end{equation}
Observe that $B_{u},\Sigma_{u}$ as well as the Malliavin derivatives $D_v\Sigma_{u}=\partial \overline{\sigma}(Y_u)~D_vY_u$ have moments of any order. 
Consider the processes
\begin{eqnarray*}
U_t&:=&\partial F_t(Y_t)~B_t+\frac{1}{2}~
\partial^2 F_t(Y_t)~A_t-G_t(Y_t)=\partial X^t (Y_t)~(\overline{b}-b)(Y_t)+\frac{1}{2}~
\partial^2 X^t(Y_t)~(\overline{a}-a)(Y_t)
\\
V_t&:=&\partial F_t(Y_t)~\Sigma_t-H_t(Y_t)=\partial X^t(Y_t)~(\overline{\sigma}-\sigma)(Y_t)\quad\mbox{\rm with}\quad A_t:=\Sigma_t^2
\end{eqnarray*}
In this notation, up to a change of sign and replacing $x$ by $Y_0$ in (\ref{Alekseev-grobner})  the stochastic interpolation formula stated in theorem~\ref{theo-al-gr} on the unit interval  takes  the following form
$$
F_1(Y_1)-F_0(Y_0)=\int_0^1 U_s~ds+\int_0^1 V_s~dW_s
$$

More generally, suppose we are given a  forward real valued continuous semi-martingale $Y_{t}$ 
of the form (\ref{Y0-ref}) for some  $\Wa_{0,t}$-adapted functions
$B_{t}$ and $\Sigma_{t}$,
and a backward random
field models of the form (\ref{back-random-field-def}) for some $\Wa_{t,1}$-adapted functions
$F_t(x),G_t(x),H_t(x)$. \\

We consider  the following conditions:\\

{\em
$(H_1)^{\prime}$: The functions $F_t(x)$, $G_t(x)$ and $H_t(x)$ as well as the differentials $\partial H_t(x)$ and  $\partial^2F_t(x)$ are  continuous w.r.t. $(t,x)$   for any given $\omega\in\Omega$. In addition, for any $n\geq 1$ we have
\begin{equation}\label{integral-H1}
\begin{array}{rclc}
\displaystyle\sup_{\vert y\vert\leq n}\left(\vert F_t(Y_t+y)\vert\vee \vert H_t(Y_t+y)\vert\vee \vert G_t(Y_t+y)\vert\right)& \leq& g_n(t)&
\\
\displaystyle\sup_{\vert y\vert\leq n}\left(\vert \partial H_t(Y_t+y)\vert\vee\vert \partial F_t(Y_t+y)\vert\vee\vert \partial^2 F_t(Y_t+y)\vert\right)& \leq& g_n(t)\quad \mbox{\it with}&\displaystyle\EE\left(\int_0^1\,g^4_n(t)\,dt\right)<\infty
\end{array}
\end{equation}

$(H_2)^{\prime}$: The Malliavin derivatives $ D_s\partial F_t(x)$ and $D_sH_t(x)$ are  continuous w.r.t. $x$ and $(s,t)$ for any given  $\omega\in\Omega$. In addition,  for any $n\geq 1$ we have 
\begin{equation}\label{integral-H2}
\begin{array}{rclc}
\displaystyle\sup_{\vert y\vert\leq n}\left( \vert (D_sF_t)(Y_t+y)\vert\vee \vert (D_sH_t)(Y_t+y)\vert\right) &\leq& h_n(s,t)&\\
\sup_{\vert y\vert\leq n}\left( \vert (D_s\partial F_t)(Y_t+y)\vert\right) &\leq& h_n(s,t)\quad \mbox{\rm with}&\displaystyle\EE\left(\int_{[0,1]^2}\,h^4_n(s,t)\,dsdt\right)<\infty
\end{array}
\end{equation}
$(H_3)$: The random processes   $B_{u},\Sigma_{u}$  as well as  $D_v\Sigma_{u}$ are continuous w.r.t. the time parameter and they have moments of any order. }\\

The next theorem is a slight extension of theorem~\ref{biv} applied to the semi-martingale and the random fields models discussed in
(\ref{Y0-ref}) and (\ref{def-rf}).

\begin{theo}\label{theo-iv}
Consider a backward random
field models of the form (\ref{back-random-field-def}) for some functions
$F_t(x),G_t(x),H_t(x)$ satisfying $(H_1)^{\prime}$ and $(H_2)^{\prime}$. Also let $Y_{t}$ be a continuous semi-martingale 
of the form (\ref{Y0-ref})  functions
$B_{t}$ and $\Sigma_{t}$ satisfying $(H_3)$.
In this situation, for any $t\in [0,1]$ we have the generalized backward It\^o-Ventzell formula 
\begin{equation}\label{random-fields-version}
\begin{array}{l}
F_t(Y_t)-F_0(Y_0)\\
\\
\displaystyle=\int_0^t~\left(\partial F_s(Y_s)~B_s+\frac{1}{2}~
\partial^2 F_s(Y_s)~A_s-G_s(Y_s)\right)~ds+\int_0^t~\left(\partial F_s(Y_s)~\Sigma_s-H_s(Y_s)\right)~dW_s
\end{array}
\end{equation}
The r.h.s. term in the above display is understood as a Skorohod integral.
\end{theo}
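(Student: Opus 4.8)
The plan is to prove \eqref{random-fields-version} by a time-discretization argument that mirrors It\^o's original construction of the stochastic integral, exactly as in the proof of the forward It\^o-Ventzell formula in \cite{pardoux-protter,ocone-pardoux}, but run "backwards in $x$-regularity and forwards in the semi-martingale." First I would fix a refining sequence of partitions $0=t_0<t_1<\cdots<t_N=t$ of $[0,t]$ with mesh $h\to0$, and write the telescoping identity
\begin{equation*}
F_t(Y_t)-F_0(Y_0)=\sum_{k=0}^{N-1}\Big[F_{t_{k+1}}(Y_{t_{k+1}})-F_{t_{k}}(Y_{t_{k}})\Big]
=\sum_{k=0}^{N-1}\Big[\big(F_{t_{k+1}}(Y_{t_{k+1}})-F_{t_{k+1}}(Y_{t_{k}})\big)+\big(F_{t_{k+1}}(Y_{t_{k}})-F_{t_{k}}(Y_{t_{k}})\big)\Big].
\end{equation*}
The first bracket is handled by a second-order Taylor expansion of the (fixed, smooth in $x$) field $F_{t_{k+1}}$ around $Y_{t_k}$, using the semi-martingale increment $Y_{t_{k+1}}-Y_{t_k}=\int_{t_k}^{t_{k+1}}B_u\,du+\int_{t_k}^{t_{k+1}}\Sigma_u\,dW_u$; this produces, in the limit, the terms $\int_0^t \partial F_s(Y_s)\,\Sigma_s\,dW_s$, $\int_0^t \partial F_s(Y_s)\,B_s\,ds$ and $\frac12\int_0^t\partial^2F_s(Y_s)\,A_s\,ds$, with $A_s=\Sigma_s^2$. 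The second bracket is the backward-field increment $F_{t_{k+1}}(x)-F_{t_k}(x)=-\int_{t_k}^{t_{k+1}}G_u(x)\,du-\int_{t_k}^{t_{k+1}}H_u(x)\,dW_u$ evaluated at the \emph{past} point $x=Y_{t_k}$; summing gives $-\int_0^tG_s(Y_s)\,ds$ plus a sum $-\sum_k\big(\int_{t_k}^{t_{k+1}}H_u(Y_{t_k})\,dW_u\big)$ of backward It\^o increments evaluated at a forward-adapted point, which is precisely the object whose $\LL_2$-limit defines the Skorohod integral $-\int_0^tH_s(Y_s)\,dW_s$.

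The key steps, in order, are: (i) justify the Taylor remainder control — the second-order remainder in the first bracket is $O(\|Y_{t_{k+1}}-Y_{t_k}\|^3)$ with coefficient $\partial^3F$ (or a modulus of continuity of $\partial^2F$), and the moment hypotheses in $(H_1)'$ (the $g_n$ with $\EE\int g_n^4<\infty$) together with $(H_3)$ ensure this sums to zero in $\LL_1$; (ii) identify $\sum_k\partial F_{t_{k+1}}(Y_{t_k})(Y_{t_{k+1}}-Y_{t_k})$ as converging to $\int_0^t\partial F_s(Y_s)\,dW_s$-plus-drift — here one must be careful that $\partial F_{t_{k+1}}(Y_{t_k})$ is \emph{not} $\Wa_{0,t_k}$-measurable (it anticipates via the backward field), so this is already a Skorohod-type sum, and its limit is handled by the same isometry/continuity estimate as in (iii); (iii) the crux: show $\sum_k\big(\int_{t_k}^{t_{k+1}}H_u(Y_{t_k})\,dW_u\big)\to\int_0^tH_s(Y_s)\,dW_s$ in $\LL_2$, where the right side is the Skorohod integral. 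This is where I would invoke the extended two-sided integration theory of Section~\ref{sec-extended-2-sided}: the integrand $H_u(Y_{t_k})$ is a composition of the backward field $H$ with the forward flow $Y$, exactly the class covered by Proposition~\ref{k-prop} (whose $\LL_2$-isometry formula, involving $\EE\int H_s(Y_s)^2\,ds+\EE\int D_sH_t(Y_t)D_tH_s(Y_s)\,ds\,dt$, both proves the limit exists and identifies it). The hypotheses $(H_2)'$ — continuity and $\LL_4$-integrability of $D_s\partial F_t$ and $D_sH_t$ along $Y$ — are precisely what is needed to make the Malliavin-derivative cross-terms in that isometry converge, and to absorb the anticipating part of step (ii) into the same framework.

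The main obstacle I expect is step (iii) combined with the interaction between the two brackets: one has to show that the "off-diagonal" covariance between the forward Taylor sum (step ii) and the backward It\^o sum (the $H$-term) produces no extra contribution in the limit — i.e. the cross terms $\EE\big[\partial F_{t_{k+1}}(Y_{t_k})(\Sigma$-increment$)\cdot\int_{t_k}^{t_{k+1}}H_u(Y_{t_k})\,dW_u\big]$ must be shown to be $o(1)$ after summation, or more precisely to contribute only through a Malliavin trace term that is already accounted for. This is exactly the phenomenon analyzed in the proof of Proposition~\ref{k-prop} (the overlapping-interval computation giving the diagonal $\int\Phi^2$ term and the disjoint-interval computation giving the $D_sD_t$ term), so the rigorous route is: reduce the whole discretized sum to the framework of Proposition~\ref{k-prop} applied with $\Phi$ replaced by the relevant combination $\partial F_s(Y_s)\Sigma_s-H_s(Y_s)$, invoke the $\LL_2$-Cauchy property and the covariance formula \eqref{cov-cauchy} proved there, and then match drift terms by the elementary Riemann-sum convergence of $\sum_k\big(\partial F_{t_{k+1}}(Y_{t_k})B_{t_k}+\frac12\partial^2F_{t_{k+1}}(Y_{t_k})A_{t_k}-G_{t_k}(Y_{t_k})\big)h$ under $(H_1)',(H_3)$. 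Everything else — the Taylor remainders, the Riemann-sum limits, and the passage from $\partial F_{t_{k+1}}$ to $\partial F_{t_k}$ — is routine once the integrability bounds in $(H_1)'$–$(H_2)'$ are in hand, and I would relegate those estimates to short paragraphs rather than detailed computation.
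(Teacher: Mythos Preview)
Your discretization approach is genuinely different from the paper's, and in spirit it is viable, but there is a real gap in the key step (iii). You propose to ``reduce the whole discretized sum to the framework of Proposition~\ref{k-prop} applied with $\Phi$ replaced by $\partial F_s(Y_s)\Sigma_s-H_s(Y_s)$.'' But Proposition~\ref{k-prop} is proved only for the specific integrand $\Phi(X^t,Y_t)=\partial X^t(Y_t)\,\varsigma(Y_t)$, and its proof relies essentially on the semigroup decomposition $\partial X^{s+h_1}=\partial(X^{t+h_2}\circ X_{s+h_1,t})+\cdots$ of the backward \emph{flow}. A general backward random field $F_t$ satisfying only $(H_1)'$--$(H_2)'$ has no such compositional structure, so the overlapping/disjoint-interval calculations in the proof of Proposition~\ref{k-prop} do not go through as written. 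The paper itself flags this limitation (end of Section~\ref{sec-1-biw-intro}): the extended two-sided integral ``applies to backward stochastic flows but it doesn't encapsulate more general backward random fields,'' and a general theory is declared out of scope. So your route would require proving a new version of Proposition~\ref{k-prop} for abstract backward fields, which is substantially more work than you indicate.

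The paper sidesteps this entirely by a mollification argument in the style of \cite{bismut-2,ocone-pardoux,pardoux-90}: for fixed $x$ one applies the known It\^o change-of-variable rule (Proposition~8.2 of \cite{nualart-pardoux}) to the product $F_t(x)\,\varphi_\epsilon(Y_t-x)$, obtaining a formula at each frozen $x$ where no backward-field-at-moving-point issue arises. One then integrates in $x$ (Skorohod--Fubini), integrates by parts to move $\partial\varphi_\epsilon$ onto $F$, and sends $\epsilon\to 0$ using dominated convergence; the hypotheses $(H_1)'$--$(H_3)$ are tailored exactly to control the five resulting error terms $J_1(\epsilon),\ldots,J_5(\epsilon)$. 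This buys you the full generality of the theorem without any new two-sided machinery, at the cost of a somewhat indirect argument. Your discretization approach, if the gap above were filled, would be more constructive and closer to the Pardoux--Protter philosophy, but as written it assumes a lemma the paper does not provide.
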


{\bf Proof:} We use the same approximation technique as in~\cite{bismut-2,ocone-pardoux} and~\cite{pardoux-90} (see also the proof of theorem 3.2.11 in~\cite{nualart}).
Consider a mollifier type approximation of the identify given for any $\epsilon>0$ by the function
$$
\varphi_{\epsilon}(x):=\varphi(x/\epsilon)/\epsilon
\quad\mbox{\rm for some smooth compactly supported function $\varphi$ s.t. $\int_{-\infty}^{\infty}\varphi(x)dx=1$. }
$$ 

For any $x$, applying the It\^o-type change rule formula stated in  proposition 8.2 in~\cite{nualart-pardoux}  to the product function
$$\Gamma(X^t(x),\varphi_{\epsilon}(Y_t-x)):=X^t(x)~\varphi_{\epsilon}(Y_t-x)
$$
we check that
\begin{equation}\label{ref-two-sided-ito}
\displaystyle(F_t(x)~\varphi_{\epsilon}(Y_t-x))-(F_0(x)~\varphi_{\epsilon}(Y_0-x))
\displaystyle=\int_0^t~u_s^{\epsilon}(x)~ds+\int_0^t~v_s^{\epsilon}(x)~dW_s
\end{equation}
with
\begin{eqnarray*}
u_s^{\epsilon}(x)&:=&F_s(x)~\partial \varphi_{\epsilon}(Y_s-x)~B_s+\frac{1}{2}~
F_s(x)~\partial^2 \varphi_{\epsilon}(Y_s-x) ~A_s-\varphi_{\epsilon}(Y_s-x)~G_s(x)\\
v_s^{\epsilon}(x)&:=&F_s(x)~\partial \varphi_{\epsilon}(Y_s-x)~ \Sigma_s-\varphi_{\epsilon}(Y_s-x)~H_s(x)
\end{eqnarray*}
The stochastic integral in the r.h.s. of (\ref{ref-two-sided-ito}) can be interpreted as a two-sided stochastic integral.
Recalling that
$$
D_t\varphi_{\epsilon}(Y_s-x)=D_tY_s~ \partial\varphi_{\epsilon}(Y_s-x)
$$
we check that
\begin{eqnarray*} D_tv_s^{\epsilon}(x)
&=&
D_tF_s(x)~\partial \varphi_{\epsilon}(Y_s-x)~ \Sigma_s+F_s(x)~D_tY_s~ \partial^2\varphi_{\epsilon}(Y_s-x)~ \Sigma_s\\
&&+F_s(x)~\partial \varphi_{\epsilon}(Y_s-x)~ D_t\Sigma_s-D_tY_s~ \partial\varphi_{\epsilon}(Y_s-x)~H_s(x)-\varphi_{\epsilon}(Y_s-x)~D_tH_s(x)
\end{eqnarray*}
Condition $(H_3)$ ensures that the processes $Y_t$ and $D_tY_s$ have moments of any order. In addition, under the regularity conditions  $(H_1)^{\prime}$ and $(H_2)^{\prime}$ we check that
$$
\int~\EE\left(\int_0^t~u_s^{\epsilon}(x)^2~ds\right)~dx<\infty\quad\mbox{\rm and}\quad
\int~\EE\left(\left[\int_0^t~v_s^{\epsilon}(x)~dW_s\right]^2\right)~dx<\infty
$$
Applying the Fubini theorem for Skorohod and measure theory integrals (see for instance~\cite{kruk,nualart,purtu} and the work by Leon~\cite{leon}) we check that
$$
F_t^{\epsilon}(Y_t):=\int~F_t(x)~\varphi_{\epsilon}(Y_t-x)~dx=\int~F_0(x)~\varphi_{\epsilon}(Y_0-x)~dx
+\int_0^t~U^{\epsilon}_s~ds=\int_0^t~V^{\epsilon}_s~dW_s
$$ 
with
$$
U^{\epsilon}_s:=\int u_s^{\epsilon}(x)~dx\quad \mbox{\rm and}\quad
V^{\epsilon}_s:=\int v_s^{\epsilon}(x)~dx
$$
Integrating by parts where derivatives of $\varphi_{\epsilon}$ appear we check that
\begin{eqnarray*}
U^{\epsilon}_s&:=&\int~\left(\partial F_s(x)~B_s+\frac{1}{2}~
\partial^2 F_s(x)~A_s-G_s(x)\right)~\varphi_{\epsilon}(Y_s-x)~dx\\
V^{\epsilon}_s&:=&\int~\left(\partial F_s(x)~ \Sigma_s-~H_s(x)\right)~ \varphi_{\epsilon}(Y_s-x)~dx
\end{eqnarray*}
From the a.s. continuity of $F_t(x)$ in $x$ for each $t\geq 0$, we have
$$
F_t^{\epsilon}(Y_t)-F_t(Y_t)=\int~(F_t(Y_t-\epsilon~x)-F_t(Y_t))~\varphi(x)~dx~\longrightarrow_{\epsilon\rightarrow 0}~0
$$
The functions $\partial F_t(x)$, $\partial^2 F_t(x)$  and $G_t(x)$ are almost surely continuous w.r.t. $x$ and uniformly locally bounded. In addition,
the random variables  $A_t$ and $B_t$ are integrable at any order. Moreover, under $(H_1)^{\prime}$ there exists some parameter $n\geq 0$ depending on the support of $\varphi$ such that for any $\epsilon>0$ we have the estimate
\begin{eqnarray*}
\vert U^{\epsilon}_s\vert
&\leq& \sup_{\vert y\vert\leq n}{\vert \partial F_s(Y_s+y)\vert}~\vert B_s\vert+\frac{1}{2}~
 \sup_{\vert y\vert\leq n}{\vert\partial^2 F_s(Y_s+y)\vert}~\vert A_s\vert+ \sup_{\vert y\vert\leq n}{\vert G_s(Y_s+y)\vert}\\&\leq& g_n(t)~(1+\vert A_s\vert+\vert B_s\vert)
\end{eqnarray*}
Thus, by the  dominated convergence theorem on $( \Omega\times [0,1])$ equipped with the measure $(\PP(d\omega)\otimes dt)$ we have
 $$
 \int_0^t~U^{\epsilon}_s~ds\longrightarrow_{\epsilon\rightarrow 0}~
\int_0^t~U_s~ds\quad \mbox{\rm as well as}\quad
 F_t^{\epsilon}(Y_t)\longrightarrow_{\epsilon\rightarrow 0}F_t(Y_t)
$$
 It remains to check that
\begin{equation}\label{V-cv}
\EE(\int_0^t (V_s^{\epsilon}-V_s)^2~ds)+\EE\left(\int_{[0,t]^2} (D_rV^{\epsilon}_s-D_rV_s)~(D_sV^{\epsilon}_r-D_sV_r)~dr~ds\right)\longrightarrow_{\epsilon\rightarrow 0}~0
\end{equation}
Observe that
$$
\begin{array}{l}
\displaystyle\int_0^t (V_s^{\epsilon}-V_s)^2~ds\\
\\
\displaystyle\leq 2~\int_0^t \int~(\partial F_s(x)-\partial F_s(Y_s))^2~\Sigma_s^2~\varphi_{\epsilon}(Y_s-x)~dx~ds+2~\int_0^t~(H_s(x)-H_s(Y_s))^2~\varphi_{\epsilon}(Y_s-x)~dx~ds
\end{array}$$

Using the chain rule property we have
$$
\begin{array}{l}
D_tV^{\epsilon}_s\\
\\
\displaystyle:=\int~D_t\left(\partial F_s(x)~ \Sigma_s-H_s(x)\right)~ \varphi_{\epsilon}(Y_s-x)~dx+
\int~\left(\partial F_s(x)~ \Sigma_s-H_s(x)\right)~ D_t\varphi_{\epsilon}(Y_s-x)~dx
\end{array}
$$
Integrating by parts, we check that
$$
D_tV^{\epsilon}_s=\int~\left[D_t\left(\partial F_s(x)~ \Sigma_s-H_s(x)\right)~ +
\left(\partial^2 F_s(x)~ \Sigma_s-\partial H_s(x)\right)~D_tY_s\right]~ \varphi_{\epsilon}(Y_s-x)~dx
$$
Observe that 
$$
\begin{array}{l}
\displaystyle D_t\left(\partial F_s(x)~ \Sigma_s-H_s(x)\right)~ +
\left(\partial^2 F_s(x)~\Sigma_s-\partial H_s(x)\right)~D_tY_s\\
\\
\displaystyle =((D_t\partial F_s)(x)+\partial^2 F_s(x)~D_tY_s)~\Sigma_s+
\partial F_s(x)~D_t\Sigma_s-((D_tH_s)(x)+\partial H_s(x)~D_tY_s)
\end{array}
$$
On the other hand, we have
$$
\begin{array}{l}
D_tV_s
=D_t(\partial F_s(Y_s))~ \Sigma_s+\partial F_s(Y_s)~ D_t\Sigma_s-D_t(H_s(Y_s))\\
\\
~~~=\left((D_t\partial F_s)(Y_s)+\partial^2F_s(Y_s)~D_tY_s\right) \Sigma_s+\partial F_s(Y_s)~ D_t\Sigma_s
-\left((D_tH_s)(Y_s)+\partial H_s(Y_s)~D_tY_s\right)
\end{array}
$$
Arguing as above, we have the estimate
$$
\begin{array}{l}
\displaystyle\EE\left(\int_{[0,1]^2} (D_rV^{\epsilon}_s-D_rV_s)~(D_sV^{\epsilon}_r-D_sV_r)~dr~ds\right)
\\
\\
\displaystyle\leq 2~\EE\left(\int_{[0,1]^2} (D_tV^{\epsilon}_s-D_tV_s)^2~ds~dt\right)\leq 2^4~\sum_{1\leq i\leq 5}~J_i(\epsilon)
\end{array}
$$
In the above display, $J_i(\epsilon)$ stands for the sequences
\begin{eqnarray*}
 J_1(\epsilon)&:=&\EE\left(\int_{[0,1]^2\times\RR}~\left(\partial F_s(x)-\partial F_s(Y_s) \right)^2~\left(D_t\Sigma_s\right)^2~ \varphi_{\epsilon}(Y_s-x)~ds~dt~dx\right)\\
 J_2(\epsilon)&:=&\EE\left(\int_{[0,1]^2\times\RR}~\left(\partial H_s(x)-\partial H_s(Y_s)\right)^2 (D_tY_s)^2~ \varphi_{\epsilon}(Y_s-x)~ds~dt~dx\right)\\
 J_3(\epsilon)&:=&\EE\left(\int_{[0,1]^2\times\RR}~\left(\partial^2 F_s(x)-\partial^2 F_s(Y_s)\right)^2~ (D_tY_s)^2~A_s~ \varphi_{\epsilon}(Y_s-x)~ds~dt~dx\right)
\end{eqnarray*}
The last two terms depend on the Malliavin derivatives of  $\partial F_s$ and $H_s$ are they are given by
\begin{eqnarray*}
 J_4(\epsilon)&:=&\EE\left(\int_{[0,1]^2\times\RR}~\left((D_t\partial F_s)(x)-(D_t\partial F_s)(Y_s)\right)^2~A_s~ \varphi_{\epsilon}(Y_s-x)~ds~dt~dx\right)\\
 J_5(\epsilon)&:=&\EE\left(\int_{[0,1]^2\times\RR}~\left((D_tH_s)(x)-(D_tH_s)(Y_s)\right)^2~ \varphi_{\epsilon}(Y_s-x)~ds~dt~dx\right)
\end{eqnarray*}

Arguing as above,  by the  dominated convergence theorem we conclude that the Skorohod integral
$$
\int_0^t~V^{\epsilon}_s~dW_s\quad \mbox{\rm converges in $\LL_2(\Omega)$ as $\epsilon\rightarrow 0$ to the Skorohod integral}~\int_0^t~V_s~dW_s
$$
This ends the proof of (\ref{V-cv}), and the proof of the theorem is now easily completed.
\cqfd

We end this section with some comments.
\begin{rmk}\label{f-rmk}

Recalling that the diffusion flow $Y_t$ introduced in (\ref{def-XYt}) has finite absolute moments of any order, the integrability conditions stated in (\ref{integral-H1}) and (\ref{integral-H2}) are satisfied 
as soon as the functions $F_t,G_t,H_t$, the differentials $\partial F_t, \partial^2 F_t, \partial H_t$, and the Malliavin derivatives
$D_sH_t,D_s\partial F_t$ have at most polynomial growth w.r.t. the state variable.

It is now readily check that $(H_1)^{\prime}$ and $(H_2)^{\prime}$ are  met for the random fields introduced in (\ref{def-rf}).

The proof  can be also be extended without difficulties to multivariate models. Following the proof of proposition 3.1 in ~\cite{ocone-pardoux}, an alternative proof of theorem~\ref{theo-iv} based on It\^o formula for Hilbert space valued processes can be developed. This elegant functional approach requires to introduce a custom Hilbert-space valued processes framework but this approach avoids to do explicitly the interchange of integration using the Fubini theorem for Skorohod and measure theory integrals. As the statement of proposition 3.1 in ~\cite{ocone-pardoux}, the assumptions of theorem~\ref{theo-iv} can also be weaken when expressed  in terms of this generalized stochastic calculus for Hilbert-space valued processes.

\end{rmk}

 \section{Illustrations}\label{sec-illustrations}
 
 \subsection{Perturbation analysis}\label{sec-perturbation}
 Assume that $\overline{\sigma}=\sigma$ and the drift function $ \overline{b}_t$ is given by a first order 
 expansion 
 $$
 \overline{b}_t(x)= b_{\delta,t}(x):= b_t(x)+\delta~  b^{(1)}_{\delta,t}(x)\quad \mbox{\rm with}\quad
  b^{(1)}_{\delta,t}(x)= b^{(1)}_t(x)+\frac{\delta}{2}~ b^{(2)}_{\delta,t}(x)
  $$
  for some perturbation parameter $\delta\in [0,1]$ and some functions $ b^{(i)}_{\delta,t}(x)$ with $i=1,2$. 
  
  In this context, the stochastic flow
 $
 \overline{X}_{s,t}(x):=X^{\delta}_{s,t}(x)
 $ can be seen as a $\delta$-perturbation of ${X}_{s,t}(x):=X^{0}_{s,t}(x)$. 
 
 We further assume that the unperturbed diffusion satisfies condition $(\Ta)_2$.
 
 To avoid unnecessary technical discussions on the existence of absolute moments of the flows we also assume that  $ b^{(i)}_{\delta,t}(x)$
 are uniformly bounded w.r.t. the parameters $(\delta,t,x)$. In addition, $b^{(1)}_t(x)$ is differentiable w.r.t.  the coordinate $x$ and it has uniformly bounded gradients.  In this situation, we set
 $$
 \Vert b^{(i)}\Vert:= \sup_{\delta,t,x}\Vert b^{(i)}_{\delta,t}(x)\Vert\quad \mbox{\rm and}\quad  \Vert \nabla 
 b^{(1)}\Vert:= \sup_{t,x}\Vert \nabla b^{(1)}_{t}(x)\Vert
 $$
 With some additional work to estimate the absolute moments of the flows,  the perturbation analysis presented below allows to handle more general models.
 The methodology described in this section can also be extended to expand the flow $X^{\delta}_{s,t}(x)$ at any order as soon as $\delta\mapsto 
 b_{\delta,t}(x)$ is sufficiently smooth.
 
The first order approximation is given by the following theorem.
\begin{theo}
For any $s\leq t$, $x\in \RR^d$ and $\delta\geq 0$ we have the first order expansion
 \begin{equation}\label{first-taylor-perturbation}
 \begin{array}{l}
\displaystyle X^{\delta}_{s,t}(x)={X}_{s,t}(x)+\delta~\partial {X}_{s,t}(x)+\frac{\delta^2}{2}~ \partial^2_{\delta} {X}_{s,t}(x)\end{array} 
 \end{equation}
 with the first order stochastic flow
 $$
\partial {X}_{s,t}(x):= \int_s^t~\left(\nabla X_{u,t}\right)(X_{s,u}(x))^{\prime}~
b^{(1)}_u(X_{s,u}(x))~du
 $$
 The remainder second order term $ \partial^2_{\delta} {X}_{s,t}(x)$ in the above display is such that 
for any $n\geq 2$ s.t. $\lambda_A(n)>0$   we have the uniform estimate
$$
\sup_{s,t,x} \EE[\Vert \partial^{2}_{\delta} {X}_{s,t}(x)\Vert^n]^{1/n}
\leq c_n
$$
\end{theo}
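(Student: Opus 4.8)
The plan is to apply the stochastic interpolation formula (\ref{Alekseev-grobner}) of Theorem~\ref{theo-al-gr} with the pair $({X}_{s,t}(x), X^{\delta}_{s,t}(x))$ playing the roles of $(X_{s,t}, \overline{X}_{s,t})$. Since $\overline{\sigma} = \sigma$, the Skorohod fluctuation term $S_{s,t}(\Delta\sigma)$ vanishes and (\ref{Alekseev-grobner}) reduces to (\ref{Alekseev-grobner-sigma-d}); here the drift difference is $\Delta b_u = b_u - b_{\delta,u} = -\delta\, b^{(1)}_{\delta,u}$, so
\begin{equation*}
X^{\delta}_{s,t}(x) - {X}_{s,t}(x) = \delta \int_s^t \left(\nabla X_{u,t}\right)(X^{\delta}_{s,u}(x))^{\prime}\; b^{(1)}_{\delta,u}(X^{\delta}_{s,u}(x))\; du.
\end{equation*}
First I would split $b^{(1)}_{\delta,u} = b^{(1)}_u + \tfrac{\delta}{2} b^{(2)}_{\delta,u}$ and, inside the leading $b^{(1)}_u$-term, replace the integration along the perturbed flow $X^{\delta}_{s,u}$ by integration along the unperturbed flow $X_{s,u}$. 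The difference between these two expressions is what produces the remainder $\tfrac{\delta^2}{2}\partial^2_\delta X_{s,t}(x)$; this is the natural definition of $\partial^2_\delta X_{s,t}(x)$, so the decomposition (\ref{first-taylor-perturbation}) holds by construction and it only remains to bound $\partial^2_\delta X_{s,t}(x)$ in $\LL_n$ uniformly.

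For the uniform estimate, the key ingredients are: (i) the almost sure / $\LL_n$ exponential decay of the tangent process, $\EE(\Vert(\nabla X_{u,t})(z)\Vert^n)^{1/n} \le \kappa_n e^{-\lambda(n)(t-u)}$, valid whenever $(\Ta)_n$ holds (which follows from $(\Ta)_2$ together with the boundedness assumptions being propagated through $\lambda_A(n)$), in the conditioned form (\ref{intro-inq-nabla}); (ii) the uniform boundedness $\Vert b^{(1)}\Vert, \Vert b^{(2)}\Vert < \infty$; and (iii) a Lipschitz estimate in the spatial argument. Concretely, after writing the remainder as a time integral of
\begin{equation*}
\left(\nabla X_{u,t}\right)(X^{\delta}_{s,u}(x))^{\prime} b^{(1)}_{\delta,u}(X^{\delta}_{s,u}(x)) - \left(\nabla X_{u,t}\right)(X_{s,u}(x))^{\prime} b^{(1)}_{u}(X_{s,u}(x)),
\end{equation*}
I would bound it by three pieces: the $b^{(2)}_{\delta,u}$-piece (order $\delta$, controlled by $\Vert b^{(2)}\Vert$ and (\ref{intro-inq-nabla})), the piece where $b^{(1)}_u$ is evaluated at $X^{\delta}_{s,u}$ versus $X_{s,u}$ (use $\Vert\nabla b^{(1)}\Vert$ times $\Vert X^{\delta}_{s,u}-X_{s,u}\Vert$, which is $O(\delta)$ by the display above combined with the tangent bound), and the piece where the tangent process is evaluated at the two different points (use the Hessian bound from Proposition~\ref{prop-nabla-2-estimate}, via $\nabla X_{u,t}(z_1) - \nabla X_{u,t}(z_2) = \int_0^1 \nabla^2 X_{u,t}(\cdots)^{\prime}(z_1-z_2)\,d\epsilon$, again times the $O(\delta)$ flow difference). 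Each piece carries an exponentially decaying factor $e^{-\lambda(n)(t-u)}$ or $e^{-[\lambda_A(n)-\epsilon](t-u)}$ after a conditioning argument (conditioning on $\Wa_{s,u}$ so that the future tangent/Hessian factors decouple from the past), so the time integral converges uniformly in $s,t$; the overall $\delta^2$ is then factored out to match the normalization in (\ref{first-taylor-perturbation}). Throughout one uses Hölder's inequality to distribute moments across the tangent, Hessian, and flow-difference factors, at the mild cost of enlarging the moment exponents, which is harmless since $(\Ta)_n$ is assumed for the relevant $n$.

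The main obstacle I anticipate is the careful bookkeeping of the conditioning and moment-splitting so that every term genuinely inherits an integrable exponential decay in $(t-u)$ — in particular, controlling $\EE[\Vert X^{\delta}_{s,u}(x) - X_{s,u}(x)\Vert^n]$ uniformly requires feeding the interpolation formula back into itself once, and one must check that the resulting bound is $O(\delta)$ with a constant independent of the horizon, which ultimately rests on the integrability of $\int_s^u e^{-\lambda(n)(u-r)}\,dr$. A secondary technical point is verifying that, under the stated hypotheses (unperturbed diffusion satisfying $(\Ta)_2$, perturbation terms uniformly bounded with bounded gradient of $b^{(1)}$), the conditions $(M)_n$ and $(\Ta)_n$ needed to invoke the tangent and Hessian estimates for the chosen $n$ are indeed available — here one uses that bounded perturbations of the drift do not affect the log-norm condition $(\Ta)_n$ and that bounded drift perturbations preserve the uniform moment bounds $(M)_n$, so no new hypothesis beyond $\lambda_A(n)>0$ is required.
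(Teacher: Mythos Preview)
Your proposal is correct and follows essentially the same approach as the paper: apply the interpolation formula (\ref{Alekseev-grobner-sigma-d}) (since $\overline{\sigma}=\sigma$ kills the Skorohod term), split $b^{(1)}_{\delta,u}=b^{(1)}_u+\tfrac{\delta}{2}b^{(2)}_{\delta,u}$, and decompose the remainder into the $b^{(2)}$-piece plus the two pieces coming from evaluating $b^{(1)}_u$ and $\nabla X_{u,t}$ along $X^{\delta}_{s,u}$ versus $X_{s,u}$, the latter handled by a Taylor expansion involving $\nabla^2 X_{u,t}$ and Proposition~\ref{prop-nabla-2-estimate}. The only minor over-worry in your plan is the side remark about checking $(\Ta)_n$ for the perturbed flow: you never need it, since the tangent and Hessian estimates of Propositions~\ref{def-4th-prop} and~\ref{prop-nabla-2-estimate} are uniform in the spatial argument and are applied to the \emph{unperturbed} flow $X_{u,t}$ only, with $X^{\delta}_{s,u}(x)$ entering merely as an evaluation point via conditioning.
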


\proof
 
Using 
(\ref{X-Y-ref-ag}) we readily check that
\begin{eqnarray*}
DX^{\delta}_{s,t}(x)&:=&
\delta^{-1}[X^{\delta}_{s,t}(x)-{X}_{s,t}(x)]=\int_s^t~\left(\nabla X_{u,t}\right)(X^{\delta}_{s,u}(x))^{\prime}~
b^{(1)}_{\delta,u}(X^{\delta}_{s,u}(x))~du
 \end{eqnarray*}
By proposition~\ref{def-4th-prop}
 for any $n\geq 2$ we have
\begin{equation}\label{ref-lambda-plus}
\lambda_A^+(n):=\lambda_A-(n-2)\rho(\nabla\sigma)^2/2>0\Longrightarrow
\EE\left(\Vert DX^{\delta}_{s,t}(x)\Vert^{n}\right)^{1/n}\leq c~  \Vert b^{(1)}\Vert/\lambda_A^+(n)
\end{equation}
 This yields the first order Taylor expansion (\ref{first-taylor-perturbation})
 with $$\partial^2_{\delta} {X}_{s,t}(x):= \partial^{(2,1)}_{\delta} {X}_{s,t}(x)+ \partial^{(2,2)}_{\delta} {X}_{s,t}(x)$$ and the second order remainder terms
\begin{eqnarray*}
 \partial^{(2,2)}_{\delta} {X}_{s,t}(x)&:=&\int_s^t~\left(\nabla X_{u,t}\right)(X^{\delta}_{s,u}(x))^{\prime}~b^{(2)}_{\delta,t}(X^{\delta}_{s,u}(x))~du\\
 \partial^{(2,1)}_{\delta} {X}_{s,t}(x)&:=&2\delta^{-1}\int_s^t~\left[\left(\nabla X_{u,t}\right)(X^{\delta}_{s,u}(x))-\left(\nabla X_{u,t}\right)(X_{s,u}(x))\right]^{\prime}~
b^{(1)}_u(X^{\delta}_{s,u}(x))~du\\
&&\hskip2cm+2\delta^{-1}\int_s^t~\left(\nabla X_{u,t}\right)(X_{s,u}(x))^{\prime}~
[b^{(1)}_u(X^{\delta}_{s,u}(x))-b^{(1)}_u(X_{s,u}(x))]~du
 \end{eqnarray*}
 Arguing as above,  for any $n\geq 2$ s.t. $\lambda_A^+(n)>0$ we have the uniform estimate
$$
\EE\left(\Vert  \partial^{(2,2)}_{\delta} {X}_{s,t}(x)\Vert^{n}\right)^{1/n}\leq c~   \Vert b^{(2)}\Vert/\lambda_A^+(n)
$$
To estimate $ \partial^{(2,1)}_{\delta} {X}_{s,t}(x)$ we need to consider the second order decompositions
$$
\begin{array}{l}
\displaystyle 2^{-1}~ \partial^{(2,1)}_{\delta} {X}_{s,t}(x)\\
\\
\displaystyle
=~\int_0^1~\int_s^t
\left[\nabla^2 X_{u,t}\right]\left(X_{s,u}(x)+\epsilon(X^{\delta}_{s,u}(y)-X_{s,u}(x))\right)^{\prime}~\left[b^{(1)}_u(X^{\delta}_{s,u}(x))\otimes
DX^{\delta}_{s,u}(x)\right]~~du~d\epsilon\\
\\
\displaystyle+\int_0^1\int_s^t~
\left(\nabla X_{u,t}\right)(X_{s,u}(x))^{\prime}~\nabla b_u^{(1)}\left(X_{s,u}(x)+\epsilon(X^{\delta}_{s,u}(x)-X_{s,u}(x)),y\right)^{\prime}~
DX^{\delta}_{s,u}(x)~du~d\epsilon
\end{array}$$
Combining proposition~\ref{prop-nabla-2-estimate} with the estimate (\ref{ref-lambda-plus})   for any $n\geq 2$  s.t. $\lambda_A(n)>0$ we check that
$$
 \EE[\Vert \partial^{(2,1)}_{\delta} {X}_{s,t}(x)\Vert^n]^{1/n}
\leq c~\left(1+n~\rchi(b,\sigma)/\lambda_A(n)\right)~\left(\Vert b^{(1)}\Vert/\lambda_A(n)\right)^2
$$
for some universal constant $c<\infty$ and the parameter $\rchi(b,\sigma)$ introduced in (\ref{def-chi-b}). This ends  the proof of (\ref{first-taylor-perturbation}). The proof of the theorem is completed.
\cqfd

\subsection{Interacting diffusions}
Consider a system of $N$ interacting and $\RR^d$-valued diffusion flows $X^{i}_{s,t}(x)$, with $1\leq i\leq N$ given by a stochastic differential equation of the form
$$
dX^{i}_{s,t}(x)=B_t\left(X^{i}_{s,t}(x),\frac{1}{N}\sum_{1\leq i\leq N}X^j_{s,t}(x)\right)~dt+\sigma_t\left(\frac{1}{N}\sum_{1\leq i\leq N}X^j_{s,t}(x)\right)~dW^i_t
$$
for some Lipschitz functions $B_t(x,y)$ and $\sigma_{t}(y)$ with appropriate dimensions. In the above display, $W^i_t$ stands for a collection of independent copies of  $d$-dimensional Brownian motion  $W_t$. 
Assume that $B_t(x,y)$ linear w.r.t. the first coordinate. 

In this situation, up to a change of probability space, the empirical mean of the process 
$$\overline{X}_{s,t}(x):=\frac{1}{N}\sum_{1\leq i\leq N}X^j_{s,t}(x)$$ satisfies the stochastic differential equation
$$
d\overline{X}_{s,t}(x)=b_t\left(\overline{X}_{s,t}(x)\right)~dt+\frac{1}{\sqrt{N}}~{\sigma}_t\left(\overline{X}_{s,t}(x)\right)~dW_t\quad\mbox{\rm with}\quad b_t(x):=B_t(x,x)
$$
Formally, the above diffusion converges as $N\rightarrow\infty$ to the flow ${X}_{s,t}(x)$ of the dynamical system defined by
$$
\partial_t{X}_{s,t}(x):=b_t\left({X}_{s,t}(x)\right)
$$
More rigorously and without further work, the forward-backward interpolation formula (\ref{Alekseev-grobner}) yields directly the bias-variance error decomposition
$$
  \begin{array}{l}
\displaystyle \overline{X}_{s,t}(x)-X_{s,t}(x) =\frac{1}{2N}~\int_s^t\left(\nabla ^2X_{u,t}\right)(\overline{X}_{s,u}(x))^{\prime}~{a}_u(\overline{X}_{s,u}(x))~du\\
 \\
\hskip5cm\displaystyle+\frac{1}{\sqrt{N}}~\int_s^t~\left(\nabla X_{u,t}\right)(\overline{X}_{s,u}(x))^{\prime}~{\sigma}_u(\overline{X}_{s,u}(x))~dW_u
  \end{array}
  $$
  This readily implies the a.s. convergence
  $$
  \overline{X}_{s,t}(x)\longrightarrow_{N\rightarrow\infty}X_{s,t}(x) 
  $$ 
  After some elementary manipulations we check the bias formula
  $$
  \lim_{N\rightarrow\infty}~N~\left[\EE(\overline{X}_{s,t}(x))-X_{s,t}(x)\right]=\frac{1}{2}~\int_s^t\left(\nabla ^2X_{u,t}\right)(X_{s,u}(x))^{\prime}~{a}_u(X_{s,u}(x))~du
  $$
  We also have the almost sure fluctuation theorem
  $$
    \lim_{N\rightarrow\infty}~\sqrt{N}~\left[\overline{X}_{s,t}(x)-X_{s,t}(x) \right]=\int_s^t~\left(\nabla X_{u,t}\right)(X_{s,u}(x))^{\prime}~{\sigma}_u(X_{s,u}(x))~dW_u
  $$

\subsection{Time discretization schemes}\label{subsec:lemdiscretizeproof}

This section is mainly concerned with the proof of proposition~\ref{lem:discretize}.
We fix some parameter $h>0$ and some $s\geq 0$ and for any $ t\in [s+kh,s+(k+1)h[$ we set
$$
  d X_{s,t}^h(x)=Y^h_{s,t}(x)~dt+  \sigma~  dW_t\quad \mbox{\rm with}\quad Y^h_{s,t}(x):=b\left(X^h_{s,s+kh}(x)\right)
$$
for some fluctuation parameter $\sigma\geq 0$.
For any $ s+kh\leq u<s+(k+1)h$ we  have
$$
X^h_{s,u}(x)-X^h_{s,s+kh}(x)
\displaystyle=Y^h_{s,u}(x)~(u-(s+kh))+\sigma~(W_u-W_{s+kh})
$$
Using (\ref{X-Y-ref-ag}), in terms of the tensor product  (\ref{tensor-notation}) we 
readily check that
$$
X^h_{s,t}(x)-X_{s,t}(x)=\int_s^t~\left(\nabla X_{u,t}\right)(X^h_{s,u}(x))^{\prime}~
~\left[Y^h_{s,u}(x)-b(X^h_{s,u}(x))\right]~
du
$$
Combining (\ref{ref-nablax-estimate-0-ae-again}) with the Minkowski integral inequality we check that
\begin{eqnarray*}
\EE \left( \Vert X^h_{s,t}(x)-X_{s,t}(x)\Vert^n \right)^{1/n}& =&\int_s^t~\EE \left( \Vert \left(\nabla X_{u,t}\right)(X^h_{s,u}(x))^{\prime}~
~\left[Y^h_{s,u}(x)-b(X^h_{s,u}(x)) \right] \Vert^n  \right)^{1/n}~du\\
& = &\int_s^t e^{-\lambda(t-u)}~\EE \left( \Vert Y^h_{s,u}(x)-b(X^h_{s,u}(x))  \Vert^n  \right)^{1/n}~du
\end{eqnarray*}
where the second line follows from the exponential estimate of the tangent process from proposition \ref{prop:diff_innit}. The integrand will be bounded as follows:  for any $ s+kh\leq u<s+(k+1)h$ and any $n\geq 1$ we have 
$$
\EE\left(\Vert b(X^h_{s,u}(x))-Y^h_{s,u}(x))\Vert^n\right)^{1/n}\leq \Vert \nabla b\Vert~\left(\left[\Vert  b(0)\Vert+\widehat{m}_n(x)~\Vert  \nabla b\Vert \right]~h+\sigma~\sqrt{h}\right)
$$
which then yields the stated result of the proposition.
We now prove the stated bound on the difference of the drift processes.  For any $ s+kh\leq u<s+(k+1)h$  we have
 \begin{eqnarray}
&& b(X^h_{s,u}(x))-Y^h_{s,u}(x)\nonumber
\\
&& =\left[\int_{0}^1
\nabla b\left(X^h_{s,s+kh}(x)+\epsilon(X^h_{s,u}(x)-X^h_{s,s+kh}(x))\right)^{\prime}~b\left(X^h_{s,s+kh}(x)\right)~d\epsilon\right]~(u-(s+kh))\nonumber
\\
&&\qquad\hskip1cm+\left[\int_{0}^1
\nabla b\left(X^h_{s,s+kh}(x)+\epsilon(X^h_{s,u}(x)-X^h_{s,s+kh}(x))\right)^{\prime}~
~d\epsilon\right]~\sigma~\left(W_{u}-W_{s+kh}\right)
\label{eq:dicrete_proof_eqn_bias}
 \end{eqnarray}
 The $\LL_n$-norm  of the second integral term is bounded by $\Vert \nabla b\Vert \sigma \sqrt{h}$.
 
The assumption  $\langle x,b(x)\rangle\leq -\beta~\Vert x\Vert^2$, for some $\beta>0$,
implies the stochastic flows $X_{s,t}(x)$ has uniform absolute moments of any order $n\geq 1$ w.r.t. the time horizon, that is,   we have that
$$
m_n(x)\leq \kappa_n~(1+\Vert x\Vert)\quad \mbox{\rm with $m_n(x)$ defined in (\ref{moments-intro})}.
$$
The  stochastic flows $X_{s,t}^h(x)$ also obey a similar moment bound: observe that
for any $ t\in [s+kh,s+(k+1)h[$ we have
$$
\begin{array}{l}
  d \Vert X_{s,t}^h(x)\Vert^2\\
  \\
  \leq \left[-2\lambda_0~\Vert X_{s,t}^h(x)\Vert^2+2~\langle X_{s,t}^h(x), b(X^h_{s,s+kh}(x))-b(X^h_{s,t}(x))\rangle+\sigma^2d\right]~dt+  2\sigma~X_{s,t}^h(x)^{\prime}  dW_t
\end{array}$$
Thus, for any $\epsilon>0$ we have
$$
  d \Vert X_{s,t}^h(x)\Vert^2
  \leq \left[(-2\lambda_0+\epsilon)\Vert X_{s,t}^h(x)\Vert^2+\epsilon^{-1}\Vert\nabla b\Vert+\sigma^2d\right]~dt+  2\sigma~X_{s,t}^h(x)^{\prime}  dW_t
$$
We can check that the stochastic flows $X_{s,t}^h(x)$ also have uniform moments w.r.t. the time horizon; that is,  for any $n\geq 1$ we have that
$$
\widehat{m}_n(x):=\sup_{h\geq 0}\sup_{t\geq s}\EE\left[\Vert X_{s,t}^h(x)\Vert^n\right]^{1/n}\leq c_n~(1+\Vert x\Vert)
$$ Using this bounds, we check that
$$
\EE (\Vert b (X^h_{s,s+kh}(x))\Vert^n)^{1/n} = \Vert b(0) \Vert + \hat{m}_n(x) \Vert \nabla b \Vert 
$$ 
The end of the proof now follows elementary manipulations, thus it is skipped.
The  proof of proposition~\ref{lem:discretize} is now completed.\cqfd

\section*{Appendix}

In this appendix we prove 
 the estimates  (\ref{intro-inq-1}) and (\ref{moments-intro}) and proposition~\ref{prop-nabla-2-estimate}.

\subsection*{Proof of (\ref{moments-intro})}\label{moments-intro-proof}
Whenever $(\Ma)_n$ is satisfied, we have
$$
2\langle x,b_t(x)\rangle+\Vert \sigma_{t}(x)\Vert_F^2\leq  \gamma_0+\gamma_1\Vert x\Vert-\gamma_2\Vert x\Vert^2
$$
with the parameters
$$
\gamma_0=\alpha_0+2\beta_0\qquad \gamma_1=\alpha_1+2\beta_1\quad\mbox{and}\quad\gamma_2=2\beta_2-\alpha_2
$$
Observe that
$$
\begin{array}{l}
d \Vert X_{s,t}(x)\Vert^2\\
\\
=\left[2\,\langle X_{s,t}(x),b_t(X_{s,t}(x))\rangle+\Vert \sigma_{t}(X_{s,t}(x))\Vert_F^2\right]~dt+2\sum_k\langle X_{s,t}(x),\sigma_{k,t}(X_{s,t}(x))\rangle~dW^k_t
\end{array}$$
After some elementary computations, for any $n\geq 1$ we check that
$$
\begin{array}{l}
n^{-1}\partial_t\EE\left[\Vert X_{s,t}(x)\Vert^{2n}\right]
\leq -\left[\gamma_2-2(n-1)\alpha_2\right]~\EE\left[\Vert X_{s,t}(x)\Vert^{2n}\right]\\
\\
\hskip3cm+\left[\gamma_1+2(n-1)\alpha_1\right]~\EE\left[\Vert X_{s,t}(x)\Vert^{2n-1}\right]+\left[\gamma_0+2(n-1)\alpha_0\right]~\EE\left[\Vert X_{s,t}(x)\Vert^{2(n-1)}\right]
\end{array}$$
This implies that
$$
\begin{array}{l}
\partial_t\EE\left[\Vert X_{s,t}(x)\Vert^{2n}\right]^{1/n}
\leq -\left[\gamma_2-2(n-1)\alpha_2\right]~\EE\left[\Vert X_{s,t}(x)\Vert^{2n}\right]^{1/n}\\
\\
\hskip3cm\displaystyle+\left[\gamma_1+2(n-1)\alpha_1\right]~\EE\left[\Vert X_{s,t}(x)\Vert^{2n}\right]^{1/(2n)}+\left[\gamma_0+2(n-1)\alpha_0\right]~\end{array}$$
from which we check that for any $\epsilon>0$ we have
$$
\begin{array}{l}
\partial_t\EE\left[\Vert X_{s,t}(x)\Vert^{2n}\right]^{1/n}\\
\\
\displaystyle\leq -\left[\gamma_2-2(n-1)\alpha_2-2\epsilon\right]~\EE\left[\Vert X_{s,t}(x)\Vert^{2n}\right]^{1/n}+\frac{1}{8\epsilon}\left[\gamma_1+2(n-1)\alpha_1\right]^2+\left[\gamma_0+2(n-1)\alpha_0\right]~\end{array}$$
This implies that
$$
\begin{array}{l}
\partial_t\EE\left[\Vert X_{s,t}(x)\Vert^{2n}\right]^{1/n}\\
\\
\displaystyle\leq -2\left[\beta_2-(n-1/2)\alpha_2-\epsilon\right]~\EE\left[\Vert X_{s,t}(x)\Vert^{2n}\right]^{1/n}+\frac{1}{8\epsilon}\left[\gamma_1+2(n-1)\alpha_1\right]^2+\left[\gamma_0+2(n-1)\alpha_0\right]~\end{array}$$
from which we check that
$$
\EE\left[\Vert X_{s,t}(x)\Vert^{2n}\right]^{1/n}\leq e^{-2\left[\beta_2-(n-1/2)\alpha_2-\epsilon\right](t-s)}~\Vert x\Vert^2+
\frac{1}{8\epsilon}~\frac{\left[\gamma_1+2(n-1)\alpha_1\right]^2+\left[\gamma_0+2(n-1)\alpha_0\right]}{2\left[\beta_2-(n-1/2)\alpha_2-\epsilon\right]}
$$
as soon as $\epsilon<\beta_2-(n-1/2)\alpha_2$ and $n\geq 1$. Replacing $\epsilon$ by $\epsilon(\beta_2-(n-1/2)\alpha_2)$ and then $(2n)$ by $n$ 
we check that
$$
  \begin{array}{l}
\displaystyle\EE\left[\Vert X_{s,t}(x)\Vert^{n}\right]^{1/n}\\
\\
\displaystyle\leq e^{-(1-\epsilon)\beta_2(n)(t-s)}~\Vert x\Vert+
\frac{1}{4\sqrt{\epsilon(1-\epsilon)}}~\frac{\gamma_1(n)+\gamma_0(n)^{1/2}}{\beta_2(n)^{1/2}}
\quad\mbox{\rm with}\quad
\gamma_i(n):=\gamma_i+(n-2)\alpha_i
\end{array}
$$
This ends the proof of (\ref{moments-intro}).\cqfd

\subsection*{Proof of proposition~\ref{prop-nabla-2-estimate}}\label{prop-nabla-2-estimate-proof}
The proof of the estimate (\ref{eq-prop-nabla-2-estimate}) is mainly based on the following technical lemma of its own interest.
\begin{lem}
Let $Z_t$ be a non negative diffusion process satisfying in integral sense an inequality of the following form
$$
dZ_t\leq (-\lambda Z_t+\alpha_t~\sqrt{Z_t}+\beta_t)~dt+dM_t\quad \mbox{\rm with}\quad \partial_t\langle M\rangle_t\leq (u_t \sqrt{Z_t}+v_tZ_t)^2
$$
for some parameters $\lambda>0$ and $v_t\geq 0$, and some non negative processes $(\alpha_t,\beta_t,u_t)$. In this situation, for any $\epsilon>0$ we have
\begin{equation}\label{key-est-y_n}
\EE(Z^n_t)^{1/n}\leq e^{\int_0^t \lambda_{n,s}(\epsilon)ds}~\EE(Z^n_0)^{1/n}+\int_0^te^{\int_s^t \lambda_{n,u}(\epsilon)du}~z^n_s(\epsilon)~ds
\end{equation}
with the parameters
\begin{eqnarray*}
\lambda_{n,t}(\epsilon)&:=&-\lambda+\frac{n-1}{2}~v^2_t+\frac{\epsilon}{2}
\\
\displaystyle z^n_t(\epsilon)&:=&
\EE\left[\beta_t^n\right]^{1/n}+\frac{n-1}{2}~\EE\left[ u^{2n}_t\right]^{1/n}+\frac{1}{\epsilon}~
\left(\EE\left[ \alpha_t^{2n}\right]^{1/n}+(n-1)^2~\EE\left[(u_tv_t)^{2n}\right]^{1/n}\right)
\end{eqnarray*}
\end{lem}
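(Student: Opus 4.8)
The plan is to apply It\^o's formula to the process $Z_t^n$ (or rather to a smooth function of $Z_t$ that behaves like $Z_t^n$ and avoids the singularity of $z\mapsto z^n$ at $0$ when $n$ is non-integer), take expectations, and then reduce to a scalar linear differential inequality for $t\mapsto\EE(Z_t^n)^{1/n}$ which can be integrated by the variation-of-constants formula. First I would compute, in the integral sense, the drift of $Z_t^n$: the contribution of the drift term $(-\lambda Z_t+\alpha_t\sqrt{Z_t}+\beta_t)\,dt$ produces $n Z_t^{n-1}(-\lambda Z_t+\alpha_t\sqrt{Z_t}+\beta_t)\,dt$, and the It\^o correction $\tfrac12 n(n-1)Z_t^{n-2}\,d\langle M\rangle_t$ is bounded using $\partial_t\langle M\rangle_t\leq (u_t\sqrt{Z_t}+v_tZ_t)^2\leq 2u_t^2 Z_t+2v_t^2Z_t^2$ (or, more sharply, $(u_t\sqrt{Z_t}+v_tZ_t)^2$ expanded directly) so that the correction is dominated by $n(n-1)Z_t^{n-2}(u_t^2Z_t+u_tv_tZ_t^{3/2}+v_t^2Z_t^2)$. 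The martingale part of $Z_t^n$ has zero expectation under a standard localization argument (justified by the moment hypotheses, since all the processes $\alpha_t,\beta_t,u_t,v_t$ and $Z_t$ have moments of any order, which is exactly what condition $(\Ta)_n$ together with the polynomial-growth assumptions on the coefficients delivers).

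Next I would collect terms: after taking expectations we get
\[
\partial_t\EE(Z_t^n)\leq -n\lambda\,\EE(Z_t^n)+n\,\EE\!\left(\alpha_t Z_t^{n-1/2}\right)+n\,\EE\!\left(\beta_t Z_t^{n-1}\right)+\tfrac{n(n-1)}{2}\Big[\EE\!\left(u_t^2 Z_t^{n-1}\right)+\EE\!\left(u_tv_t Z_t^{n-1/2}\right)+v_t^2\,\EE\!\left(Z_t^n\right)\Big],
\]
where I have used $v_t^2$ deterministic or, if it is a process, kept it inside the expectation; in the statement $v_t$ appears only through $v_t^2$ in $\lambda_{n,t}(\epsilon)$, so I will treat the ``worst'' factor $\tfrac{n-1}2 v_t^2$ as a bound that can be pulled out, which is consistent with how the lemma is phrased (here I would double-check whether the intended reading is $v_t$ deterministic; if not, a supremum bound or a slightly different grouping is needed). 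Dividing by $n\,\EE(Z_t^n)^{(n-1)/n}$ and using H\"older's inequality in the form $\EE(\alpha_t Z_t^{n-1/2})\leq \EE(\alpha_t^{2n})^{1/(2n)}\EE(Z_t^n)^{(2n-1)/(2n)}$, $\EE(\beta_t Z_t^{n-1})\leq\EE(\beta_t^n)^{1/n}\EE(Z_t^n)^{(n-1)/n}$, $\EE(u_t^2 Z_t^{n-1})\leq\EE(u_t^{2n})^{1/n}\EE(Z_t^n)^{(n-1)/n}$, and $\EE(u_tv_t Z_t^{n-1/2})\leq\EE((u_tv_t)^{2n})^{1/(2n)}\EE(Z_t^n)^{(2n-1)/(2n)}$, I obtain an inequality for $\phi(t):=\EE(Z_t^n)^{1/n}$ of the shape
\[
\partial_t\phi(t)\leq\Big(-\lambda+\tfrac{n-1}{2}v_t^2\Big)\phi(t)+\Big(\EE(\alpha_t^{2n})^{1/(2n)}+(n-1)\EE((u_tv_t)^{2n})^{1/(2n)}\Big)\phi(t)^{1/2}+\EE(\beta_t^n)^{1/n}+\tfrac{n-1}{2}\EE(u_t^{2n})^{1/n}.
\]

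Then I would absorb the $\phi(t)^{1/2}$ term by Young's inequality: for any $\epsilon>0$, $c\,\phi^{1/2}\leq \tfrac{\epsilon}{2}\phi+\tfrac{1}{2\epsilon}c^2$, which converts the coefficient $c=\EE(\alpha_t^{2n})^{1/(2n)}+(n-1)\EE((u_tv_t)^{2n})^{1/(2n)}$ into a contribution $\tfrac{\epsilon}{2}\phi(t)$ to the linear rate and $\tfrac{1}{2\epsilon}c^2\leq\tfrac{1}{\epsilon}\big(\EE(\alpha_t^{2n})^{1/n}+(n-1)^2\EE((u_tv_t)^{2n})^{1/n}\big)$ to the forcing term (using $(a+b)^2\le 2a^2+2b^2$). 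This yields exactly $\partial_t\phi(t)\leq\lambda_{n,t}(\epsilon)\phi(t)+z^n_t(\epsilon)$ with $\lambda_{n,t}(\epsilon)$ and $z^n_t(\epsilon)$ as in the statement, and the Gr\"onwall/variation-of-constants inequality gives (\ref{key-est-y_n}). The main obstacle I anticipate is not the algebra but the rigorous justification of the It\^o computation and the vanishing of the local-martingale term at the level of expectations, given that $Z_t$ is only assumed to satisfy a differential \emph{inequality} (a comparison/supersolution argument) and that $z\mapsto z^n$ is not $C^2$ at the origin for non-integer $n$; the clean way around this is to work with $(Z_t+\eta)^n$ for $\eta>0$, carry the whole argument through with the obvious harmless modifications, and let $\eta\downarrow0$ at the end using monotone/dominated convergence together with the a priori moment bounds. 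A secondary point to be careful about is the precise handling of $v_t$ (deterministic versus process) in the step where $\tfrac{n-1}{2}v_t^2$ is pulled out as a pointwise-in-time rate; I would state explicitly the hypothesis under which this is legitimate (e.g. $v_t$ bounded or deterministic), which matches the intended applications where $v_t$ comes from $\rho_\star(\nabla\sigma)$-type quantities.
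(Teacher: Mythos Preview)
Your proposal is correct and follows essentially the same route as the paper's proof: apply It\^o to $Z_t^n$, take expectations, use H\"older and Young to reduce to a linear differential inequality for $\phi(t)=\EE(Z_t^n)^{1/n}$, and integrate. The only cosmetic difference is the order of operations: the paper applies Young's inequality $[\alpha_t+(n-1)u_tv_t]Z_t^{n-1/2}\le \tfrac{1}{2\epsilon}[\alpha_t+(n-1)u_tv_t]^2 Z_t^{n-1}+\tfrac{\epsilon}{2}Z_t^n$ at the almost-sure level \emph{before} taking expectations and applying H\"older to the resulting $Z_t^{n-1}$ term, whereas you apply H\"older first and Young afterwards on $\phi^{1/2}$; both orderings yield the same constants. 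Your remarks on regularizing via $(Z_t+\eta)^n$ and on $v_t$ being deterministic (the lemma calls it a ``parameter'' and places $v_t^2$ directly in $\lambda_{n,t}(\epsilon)$) are well taken and go slightly beyond what the paper spells out.
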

\proof
Applying It\^o's formula, for any $n\geq 2$, we have
$$
\begin{array}{l}
\displaystyle n^{-1}\partial_t\EE(Z^n_t)\\
\\
\displaystyle \leq \EE\left[Z^{n-1}_t(-\lambda Z_t+\alpha_t~\sqrt{Z_t}+\beta_t)+\frac{n-1}{2}~(u_t \sqrt{Z_t}+v_tZ_t)^2~Z^{n-2}_t\right]\\
\\
\displaystyle=\left(-\lambda+\frac{n-1}{2}~v^2_t\right)~\EE(Z^n_t)+\EE\left[\left(\beta_t+\frac{n-1}{2}~u^2_t\right)Z^{n-1}_t\right]+ \EE\left(\left[\alpha_t+(n-1)u_tv_t\right]~Z^{n-1/2}_t\right)
\end{array}
$$
On the other hand, for any $\epsilon>0$ we have the almost sure inequality
$$
\left[\alpha_t+(n-1)u_tv_t\right]~Z^{(n-1)/2}_t~Z^{n/2}_t\leq \frac{1}{2\epsilon}~\left[\alpha_t+(n-1)u_tv_t\right]^2~Z^{n-1}_t+\frac{\epsilon}{2}~Z^{n}_t
$$
This implies that
$$
\begin{array}{l}
\displaystyle n^{-1}\partial_t\EE(Z^n_t)\\
\\
\displaystyle \leq \lambda_{n,t}(\epsilon)~\EE(Z^n_t)+\EE\left[\left(\beta_t+\frac{n-1}{2}~u^2_t+\frac{1}{2\epsilon}~\left[\alpha_t+(n-1)u_tv_t\right]^2\right)Z^{n-1}_t\right]\end{array}
$$
Applying H\"older inequality we check that
$$
\begin{array}{l}
\displaystyle\EE\left[\left(\beta_t+\frac{n-1}{2}~u^2_t+\frac{1}{2\epsilon}~\left[\alpha_t+(n-1)u_tv_t\right]^2\right)~Z^{n-1}_t\right]\\
\\
\displaystyle\leq 
\EE\left[\left(\beta_t+\frac{n-1}{2}~u^2_t+\frac{1}{2\epsilon}~\left[\alpha_t+(n-1)u_tv_t\right]^2\right)^n\right]^{1/n}
~\EE(Z^n_t)^{1-1/n}\leq 
 z^n_t
~\EE(Z^n_t)^{1-1/n}
\end{array}
$$
This yields the estimate
$$
 \partial_t\EE(Z^n_t)^{1/n}=\EE(Z^n_t)^{-(1-1/n)}~n^{-1}\partial_t\EE(Z^n_t)\leq   \lambda_{n,t}(\epsilon)~\EE(Z^n_t)^{1/n}+ z^n_t
$$
This ends the proof of the lemma.
\cqfd

We set
$$
Y_{s,t}(x):=\Vert \nabla^2 X_{s,t}(x)\Vert^2_F\quad \mbox{\rm and}\quad T_{s,t}(x):=\Vert \nabla X_{s,t}(x)\Vert_F
$$
and we also consider the collection of parameters
$$
\begin{array}{rclcrcl}
\Vert \tau\Vert_F&:=&\sup_{t,x}\Vert \tau_t(x)\Vert_F&&
\rho( \upsilon)&:=&\sup_{t,x}\lambda_{\tiny max}(\upsilon_{t}(x))
\end{array}
$$
with the tensor functions $(\tau_t,\upsilon_t)$ introduced in  (\ref{tensor-functions-ref}). Observe that
$$
\Vert \tau\Vert_F\leq \Vert \nabla^2b\Vert_F+d~ \Vert \nabla^2\sigma\Vert_{F}^2\quad \mbox{\rm and}\quad
\rho( \upsilon)\leq d~\Vert \nabla^2\sigma\Vert^2_{2}
$$
Whenever $(\Ta)_2$ is met we have
$$
\tr\left[\nabla^2 X_{s,t}(x)~A_t(X_{s,t}(x))~\nabla^2 X_{s,t}(x)^{\prime}\right]\leq -2\lambda_A~Y_{s,t}(x)
$$
Also observe that
$$
\vert\tr\left[\left[\nabla X_{s,t}(x)\otimes \nabla X_{s,t}(x)\right]~\tau_t(X_{s,t}(x))~\nabla^2 X_{s,t}(x)^{\prime}\right]\vert\leq \Vert \tau\Vert_F~
Y_{s,t}(x)^{1/2}~T_{s,t}(x)^2
$$
and
$$
\tr\left[\left[\nabla X_{s,t}(x)\otimes \nabla X_{s,t}(x)\right]\upsilon_t(X_{s,t}(x))\left[\nabla X_{s,t}(x)\otimes \nabla X_{s,t}(x)\right]^{\prime}\right]\leq \rho( \upsilon)~T_{s,t}(x)^4
$$

In the same vein, we have
$$
 \begin{array}{l}
\vert \tr
\left\{\left[\nabla X_{s,t}(x)\otimes \nabla X_{s,t}(x)\right]~\nabla^2\sigma_{t,k}(X_{s,t}(x))~\nabla^2 X_{s,t}(x)^{\prime}\right.\\
\displaystyle\hskip7cm\left.+\nabla^2 X_{s,t}(x)~\nabla \sigma_{t,k}(X_{s,t}(x))~\nabla^2 X_{s,t}(x)^{\prime}\right\}\vert\\
\\
\leq \Vert \nabla^2\sigma_k\Vert_F~T_{s,t}(x)^2~Y_{s,t}(x)^{1/2}+\rho(\nabla \sigma_k)~Y_{s,t}(x)
\end{array}
$$

We are now in position to prove proposition~\ref{prop-nabla-2-estimate}.

{\bf Proof of proposition~\ref{prop-nabla-2-estimate}:} 

Applying the above lemma to the processes
$$
Z_t=Y_{s,t}(x)\qquad \lambda=2\lambda_A\qquad \alpha_t=2\Vert \tau\Vert_F~T_{s,t}(x)^2\qquad \beta_t=\rho( \upsilon)~T_{s,t}(x)^4
$$
and the parameters
$$
u_t=2\sqrt{d}~ \Vert \nabla^2\sigma\Vert_{F}~T_{s,t}(x)^2\quad\mbox{\rm and}\quad v_t=2\sqrt{d}~ \rho_{\star}(\nabla \sigma)
$$
we obtain the estimate (\ref{key-est-y_n}) with the parameters
\begin{eqnarray*}
\lambda_{n,t}(\epsilon)&:=&-2\left[\lambda_A-d(n-1) \rho_{\star}(\nabla \sigma)^2-\frac{\epsilon}{4}\right]
\\
\displaystyle z^n_t(\epsilon)&:=&\left\{\rho( \upsilon)~
+2d(n-1)~\Vert \nabla^2\sigma\Vert_{F}^2~\right.\left.+\frac{4}{\epsilon}~
\left(
\Vert \tau\Vert_F^2~
+4~d^2(n-1)^2~ \rho_{\star}(\nabla \sigma)^2~\Vert \nabla^2\sigma\Vert_{F}^2~\right)\right\}\\
&&\hskip3cm\times \EE\left[\Vert \nabla X_{s,t}(x)\Vert_F^{4n}\right]^{1/n}
\end{eqnarray*}
Observe that
$$
\begin{array}{l}
\displaystyle
z^n_t(\epsilon) 
\leq c n^2~(1\vee\epsilon^{-1})~\rchi(b,\sigma)^2~\EE\left[\Vert \nabla X_{s,t}(x)\Vert_F^{4n}\right]^{1/n}\\
\\
\end{array}
$$
for some universal constant $c<\infty$ and the parameter $\rchi(b,\sigma)$ defined in (\ref{def-chi-b}).
Using (\ref{def-4th}) we check that
$$
\begin{array}{l}
\displaystyle\EE\left(\Vert \nabla^2 X_{s,t}(x)\Vert^{2n}_F\right)^{1/n}\\
\\
\displaystyle\leq  c n^2~(1\vee\epsilon^{-1})~\rchi(b,\sigma)^2~ \int_s^te^{-2\left[\lambda_A-d(n-1) \rho_{\star}(\nabla \sigma)^2-\frac{\epsilon}{4}\right](t-u)}~e^{-4\left[\lambda_A-(n-1)\rho(\nabla\sigma)^2\right](u-s)}~du\\
\\
\displaystyle= c  n^2~(1\vee\epsilon^{-1})~\rchi(b,\sigma)^2~e^{-2\left[\lambda_A-d(n-1) \rho_{\star}(\nabla \sigma)^2-\frac{\epsilon}{4}\right](t-s)}~\\
\\
\hskip3cm\displaystyle \int_s^te^{-2\left[
\lambda_A-(n-1)\rho(\nabla\sigma)^2+(n-1) [d\rho_{\star}(\nabla \sigma)^2-\rho(\nabla\sigma)^2]+\frac{\epsilon}{4}\right](u-s)}~du
\end{array}$$
Assume that
$$
\lambda_A>d(n-1) \rho_{\star}(\nabla \sigma)^2
$$
In this case there exists some $0<\epsilon_n\leq 1$ such that for any $0<\epsilon\leq \epsilon_n$ we have
$$
\lambda_A-d(n-1) \rho_{\star}(\nabla \sigma)^2>\epsilon
$$
and therefore
$$
\displaystyle\EE\left(\Vert \nabla^2 X_{s,t}(x)\Vert^{2n}_F\right)^{1/(2n)}\leq  c ~n~\epsilon^{-1}~\rchi(b,\sigma)~\exp{\left(-\left[\lambda_A-d(n-1) \rho_{\star}(\nabla \sigma)^2-\epsilon\right](t-s)\right)}
$$
This ends the proof of the proposition.\cqfd

\subsection*{Proof of (\ref{intro-inq-1})}\label{intro-inq-1-proof}
Using (\ref{intro-inq-nabla}), the generalized Minkowski inequality applied to (\ref{Alekseev-grobner})  whenever $(\Ta)_{n/\delta}$ is met for some $\delta\in ]0,1[$ and $n\geq 2$ gives
  \begin{equation}\label{intro-inq-0}
     \begin{array}{l}
\displaystyle 
 \EE\left[\Vert    T_{s,t}(\Delta a,\Delta b)(x)\Vert^n\right]^{1/n}\\
 \\
\displaystyle   \leq \frac{\kappa_{n/\delta}}{\lambda(n/\delta)}~\left(\vertiii{\Delta b(x)}_{n/(1-\delta)}+\vertiii{\Delta a(x)}_{n/(1-\delta)}\right)\quad \mbox{\rm 
with  $(\kappa_n,\lambda(n))$ given in (\ref{ref-tan-hess}). }
     \end{array}
 \end{equation}
 The Skorohod integral $S_{s,t}(\Delta \sigma)(x)$ is estimated using theorem~\ref{theo-quantitative-sko}.
Using (\ref{intro-inq-0}) and (\ref{intro-inq-s}) we check that
$$
   \begin{array}{l}
\displaystyle
  \EE\left[\Vert   X_{s,t}(x)-\overline{X}_{s,t}(x)\Vert^{n}\right]^{1/n}\\
  \\
\displaystyle\leq  \kappa_{(\delta_1,\delta_2),n}~\left(\vertiii{\Delta a(x)}_{n/(1-\delta_1)}+\vertiii{\Delta b(x)}_{n/(1-\delta_1)}
+\vertiii{\Delta\sigma(x)}_{2n/\delta_2}~(1\vee\Vert x\Vert)\right)\end{array} 
$$
as soon as the regularity conditions 
$(\Ta)_{n/\delta_1}$,
$(M)_{2n/\delta_2}$ and  $(T)_{2n/(1-\delta_2)}$ are satisfied for some parameter $n\geq 2$ and some $\delta_1,\delta_2\in ]0,1[$. Choosing $\delta_1=(1-\delta_2)/2$ and setting $\delta=\delta_2$ we check that
$$
   \begin{array}{l}
\displaystyle
  \EE\left[\Vert   X_{s,t}(x)-\overline{X}_{s,t}(x)\Vert^{n}\right]^{1/n}\\
  \\
\displaystyle\leq  \kappa_{\delta,n}~\left(\vertiii{\Delta a(x)}_{2n/(1+\delta)}+\vertiii{\Delta b(x)}_{2n/(1+\delta)}
+\vertiii{\Delta\sigma(x)}_{2n/\delta}~(1\vee\Vert x\Vert)\right)\end{array} 
$$
as soon as $(M)_{2n/\delta}$ and  $(T)_{2n/(1-\delta)}$ are satisfied for some parameter $n\geq 2$ and some $\delta\in ]0,1[$. 
For instance, $(\Ma)_{2n/\delta}$ and  $(\Ta)_{2n/(1-\delta)}$ are satisfied as soon as
$$
 \beta_2-\alpha_2/2>(n/\delta-1)~\alpha_2\quad\mbox{\rm and}\quad \lambda_A>d(n/(1-\delta)-1)~ \rho_{\star}(\nabla \sigma)^2
$$
This ends the proof of (\ref{intro-inq-1}).\cqfd

\section*{Acknowledgments}
P. Del Moral is supported in part from the Chair Stress Test, RISK Management and Financial Steering, led by the French Ecole polytechnique and its Foundation and sponsored by BNP Paribas, and by the ANR Quamprocs on quantitative analysis of metastable processes.

We also thank the anonymous reviewers for their excellent suggestions for improving the paper.
Their detailed comments greatly improved the 
presentation of the article.


\begin{thebibliography}{99}
 
 \bibitem{Alekseev}
 V. Alekseev.  An estimate for the perturbations of the solution of ordinary differential equations. Vestn. Mosk.Univ., Ser. I, Math. Meh. vol. 2, (1961).
 
 \bibitem{hemmen}
T. Ando and J. L. van Hemmen. An inequality for trace ideals. Commun. Math. Phys., vol.
76, pp. 143--148 (1980).

 
   \bibitem{mp-var-18}
M.  Arnaudon, P. Del Moral. A variational approach to nonlinear and interacting diffusions.
	\href{https://arxiv.org/pdf/1812.04269.pdf}{ArXiv:1812.04269  (2018).}
Stochastic Analysis and Applications DOI: 10.1080/07362994.2019.1609985 (2019).

   \bibitem{mp-dualtiy}
M.  Arnaudon, P. Del Moral. A duality formula and a particle Gibbs sampler for continuous time Feynman-Kac measures on path spaces. ArXiv 1805.05044 (2018). Electronic Journal of Probability 25 (2020).

   \bibitem{mp-var-19}
M. Arnaudon, P. Del Moral. A second order analysis of McKean-Vlasov semigroups. \href{https://arxiv.org/abs/1906.05140}{\tt ArXiv:1906.05140  (2019}), Annals of Applied Probability, vol. 30, no. 6, pp. 2613--2664. (2020).

  \bibitem{aht-03}
M. Arnaudon, H. Plank, A. Thalmaier.
A Bismut type formula for the Hessian of heat semigroups.
C. R. Math. Acad. Sci. Paris, vol. 336, no. 8, pp. 661--666 (2003).

\bibitem{bellman}
R. Bellman.  Some inequalities for the square Root of a Positive Definite Matrix. Linear Algebra and its applications, vol. 1, no. 3, pp. 321--324 (1968).

\bibitem{bellman-2}
R. Bellman, Stability Theory of Differential Equations, McGraw Hill, New York, (1953).

\bibitem{bishop-stab}
 A. N. Bishop, P. Del Moral.
On the Stability of Matrix-Valued Riccati Diffusions. \href{https://arxiv.org/abs/1808.00235}{ArXiv:1808.00235 (2018)}.Electron. J. Probab., vol. 24, no. 84, 40 pp. (2019). 

\bibitem{bishop-18}
A.N. Bishop,  P. Del Moral, S.D. Pathiraja. Perturbations and projections of Kalman-Bucy semigroups. Stochastic Processes and their Applications, vol. 9, no.128, pp. 2857--2904  (2018).

\bibitem{bishop-19}
A. N. Bishop, P. Del Moral, A. Niclas.
A perturbation analysis of stochastic matrix Riccati diffusions. \href{https://arxiv.org/abs/1709.05071}{\tt Arxiv 1709.05071 (2017)}.  Ann. Inst. H. Poincar\'e Probab. Statist., vol. 56, no. 2, pp. 884--916.(2020).

\bibitem{bismut-2}
J.M.
Bismut.  A generalized formula of It\^o and some other properties of stochastic
flows, Z. Wahrschein. Werw. Geb., vol. 55,  pp. 331--350 (1981).


\bibitem{bismut}
J.M.
Bismut. Large deviations and the Malliavin calculus. Birkhauser Prog. Math. 45 (1984).


\bibitem{carverhill}
A.P. Carverhill and K.D. Elworthy. Flows of Stochastic Dynamical Systems: The Functional Analytic Approach. Z. Wahrs 65, pp. 245--267  (1983).

\bibitem{coppel1978stability}
W.A. Coppel. Dichotomies in Stability Theory. Springer (1978).

  \bibitem{daprato-2}
 G. Da Prato, 
J.L.  Menaldi, L.  Tubaro. Some results of backward It\^o formula. Stochastic analysis and applications, vol. 25, no. 3, pp. 679--703 (2007).

  \bibitem{daprato-3}
 G. Da Prato. Some remarks about backward It\^o formula and applications. Stochastic analysis and applications, vol. 16, no. 6, pp. 993--1003 (1998).

 \bibitem{d-2004}
P. {Del Moral}.
\newblock Feynman-{K}ac formulae.
\newblock  Genealogical and interacting particle systems with applications.
\newblock  Probability and its Applications (New York). (573p.) Springer-Verlag, New
  York (2004).
  
   \bibitem{d-2013}
  Del Moral, Pierre. Mean field simulation for Monte Carlo integration. Chapman and Hall, CRC press. Monographs on Statistics and Applied Probability (2013).
  
  \bibitem{dm-g-99}
P. Del Moral, A.  Guionnet. On the stability of measure valued processes with applications to filtering. Comptes Rendus de l'Acad\'emie des Sciences-Series I-Mathematics, vol. 329, no. 5, pp. 429--434 (1999).

\bibitem{guionnet}
P. Del Moral and A. Guionnet. On the stability of interacting processes with applications to filtering and genetic algorithms. Ann. Inst. Henri Poincar\'e, vol. 37, no. 2, pp. 155--194 (2001).

 \bibitem{dm-2000}
P.~Del~Moral and L.~Miclo.
\newblock Branching and interacting particle systems ap\-pro\-ximations of
  {F}eynman-{K}ac formulae with applications to non-linear filtering.
\newblock In {\em S\'eminaire de {P}robabilit\'es, {XXXIV}}, volume 1729,
  {\em Lecture Notes in Math.}, pages 1--145. Springer, Berlin (2000).
  
    \bibitem{Elworthy}
K.D. Elworthy, X.M. Li. Formulae for the Derivative of Heat Eemigroups. Journal of Functional Analysis 125, pp. 252--286 (1994).


\bibitem{grobner}
 Gr\"obner, W. Die Lie-Reihen und ihre Anwendungen. VEB Deutscher Verlag der Wiss., Berlin (1960).
 
 \bibitem{gronwall}
 T.H. Gronwall, Note on the derivatives with respect to a parameter of the solutions of a system of differential equations, Ann. Math., vol. 20, no. 2 , pp. 
 293--296 (1919).
 
 \bibitem{higham}
N. J. Higham. Functions of Matrices : Theory and Computation, SIAM, Philadelphia, PA (2008).


\bibitem{hudde}
A. Hudde, M. Hutzenthaler, A. Jentzen, S. Mazzonetto.  On the It\^ o-Alekseev-Gr\" obner formula for stochastic differential equations. arXiv preprint arXiv:1812.09857 (2018).

\bibitem{hutz-14}
M. Hutzenthaler and A. Jentzen. On a perturbation theory and on strong convergence rates for stochastic ordinary and partial differential equations with non-globally monotone coefficients. \href{http://arxiv.org/abs/1401.0295}{\tt Arxiv 1401.0295 (2014)}. To appear in the Annals of Probability (2019)

\bibitem{iserles}
A. Iserles, G. S\"oderlind. Global bounds on numerical error for ordinary differential equations. Journal of Complexity, vol. 9, no. 1, pp. 97--112 (1993).


\bibitem{jentzen}
A. Jentzen, F. Lindner, P. Pusnik. On the Alekseev-Gr\"obner formula in Banach spaces. arXiv preprint arXiv:1810.10030 (2018).	


\bibitem{kunita-2}
H. Kunita. First order stochastic partial differential equations. Stochastic Analysis (K. It\o ed.). Kinokunniya. Tokyo, pp. 249-269 (1984).


\bibitem{kunita}
H. Kunita, and M. K. Ghosh. Lectures on stochastic flows and applications. Bombay: Tata Institute of Fundamental Research (1986).

\bibitem{krylov}
N.V. Krylov. B.L. Rozowskii. On the first integrals and Liouville equations for diffusion processes. Stochastic Differential Systems. 
Proc. 3rd SFSP-WG 7/1, Visegrad, Hungary 1980. Lecture Notes in Control and Information Sciences, vol. 36, pp. 117-125 (1981).

\bibitem{kruk}
I. Kruk, F. Russo, C. A. Tudor. Wiener integrals, Malliavin calculus and covariance measure structure. Journal of Functional Analysis, vol. 249, no. 1, pp. 92--142 (2007).

\bibitem{leon}
J.A. Leon. Fubini theorem for anticipating stochastic integrals in Hilbert space. Applied Mathematics and Optimization, vol. 27, no. 3, pp. 313--327 (1993).

\bibitem{xm-li}
X. M.
Li. Doubly Damped Stochastic Parallel Translations and Hessian Formulas. International Conference on Stochastic Partial Differential Equations and Related Fields. Springer, Cham (2016).

\bibitem{nualart}
D. Nualart. The Malliavin calculus and related topics. Vol. 1995. Berlin: Springer (2006).

\bibitem{nualart-pardoux}
D. Nualart, E. Pardoux. Stochastic calculus with anticipating integrands. Probability Theory and Related Fields, vol. 78, no. 4, pp. 535--581 (1988).

\bibitem{nualart-z}
D. Nualart, M. Zakai.  Generalized multiple stochastic integrals and the representation of wiener functionals. Stochastics, vol. 23, no. 3, pp. 
311--330 (1988).

\bibitem{norris}
J. R. Norris.
Simplified Malliavin calculus.
S\'eminaire de probabilit\'es (Strasbourg), tome 20, p. 101--130 (1986).


  \bibitem{ocone-pardoux}
D. Ocone, E. Pardoux. A generalized It\^o-Ventzell formula. Application to a class of anticipating stochastic differential equations. In Annales de l'IHP Probabilit\'es et statistiques, vol. 25, no. 1, pp. 39--71 (1989).

  \bibitem{pardoux-90}
E. Pardoux. Applications of anticipating stochastic calculus to stochastic differential equations. In Stochastic Analysis and Related Topics II (pp. 63-105). Springer, Berlin, Heidelberg (1990).

 \bibitem{pardoux-protter}
 E. Pardoux, P. Protter. A two-sided stochastic integral and its calculus.
 Probability Theory and Related Fields, vol. 76, no. 1, pp. 15--49 (1987).
 
 
 \bibitem{purtu}
 O. Purtukhia. Fubini type theorems for ordinary and stochastic integrals. Proceedings of A. Razmadze Mathematical Institute, 
vol. 130, pp.  101--114 (2002).
 
  \bibitem{scheutzow}
 M. Scheutzow.  A stochastic Gronwall lemma. Infinite Dimensional Analysis, Quantum Probability and Related Topics, vol. 16, no. 2, p. 1350019 (2013).
 
 \bibitem{thompson}
J. Thompson. Derivatives of Feynman-Kac semigroups. Journal of Theoretical Probability, vol. 32, no. 2, pp. 950--973 (2019).


\bibitem{tim}
J. Timmer, S. Haussler, M. Lauk, and C.-H. Lucking, Pathological tremors: Deterministic chaos or nonlinear stochastic oscillators. Chaos: An Interdisciplinary Journal of Nonlinear Science, vol. 10 no. 1, pp. 278--288  (2000).
 
  \bibitem{watanabe}
S. Watanabe. Lectures on stochastic differential equations and Malliavin Calculus. Tata Institute of Fundamental Research. Springer-Verlag (1984).

\bibitem{wu1974note}
M. Wu. A note on stability of linear time-varying systems. IEEE Transactions on Automatic Control. vol. 19, no. 2. pp. 162--162 (1974).

\end{thebibliography}
\end{document}